\newtheoremstyle{normal}
{10pt}
{10pt}
{\normalfont}
{}
{\bfseries}
{}
{0.8em}
{\bfseries{\thmname{#1}\thmnumber{ #2}.\thmnote{ \hspace{0.5em}(#3)\newline}}}
\newtheoremstyle{kursiv}
{10pt}
{10pt}
{\itshape}
{}
{\bfseries}
{}
{0.8em}
{\bfseries{\thmname{#1}\thmnumber{ #2}.\thmnote{ \hspace{0.5em}(#3)\newline}}}
\theoremstyle{kursiv}
\newtheorem{definition}{Definition}[section]
\newtheorem{satz}[definition]{Theorem}
\newtheorem{lemma}[definition]{Lemma}
\newtheorem{korollar}[definition]{Corollary}
\newtheorem{proposition}[definition]{Proposition}
\theoremstyle{normal}
\newtheorem{bemerkungen}[definition]{Remarks}
\newtheorem{bemerkung}[definition]{Remark}
\newtheorem{beispiel}[definition]{Example}
\newlength{\leftstackrelawd}
\newlength{\leftstackrelbwd}
\def\leftstackrel#1#2{\settowidth{\leftstackrelawd}%
{${{}^{#1}}$}\settowidth{\leftstackrelbwd}{$#2$}%
\addtolength{\leftstackrelawd}{-\leftstackrelbwd}%
\leavevmode\ifthenelse{\lengthtest{\leftstackrelawd>0pt}}%
{\kern-.5\leftstackrelawd}{}\mathrel{\mathop{#2}\limits^{#1}}}
\newcommand{\iR}{\mathbb{R}}
\newcommand{\R}{\mathbb{R}}
\newcommand{\iN}{\mathbb{N}}
\newcommand{\N}{\mathbb{N}}
\newcommand{\iZ}{\mathbb{Z}}
\newcommand{\Z}{\mathbb{Z}}
\begin{document}
\pagenumbering{arabic}
\title{Li-Yau and Harnack inequalities via curvature-dimension conditions for discrete long-range
jump operators including the fractional discrete Laplacian}
\date{\today}
\author{Sebastian Kr\"ass}
\email{sebastian.kraess@uni-ulm.de}
\author{Frederic Weber}
\email{frederic.weber@alumni.uni-ulm.de}
\author{Rico Zacher$^*$}
\thanks{$^*$Corresponding author}
\email[Corresponding author:]{rico.zacher@uni-ulm.de}
\address[Sebastian Kr\"ass, Frederic Weber, Rico Zacher]{Institute of Applied Analysis, Ulm University, Helmholtzstra\ss{}e 18, 89081 Ulm, Germany.}

\begin{abstract}
We consider operators of the form 
$L u(x) = \sum_{y \in \mathbb{Z}} k(x-y) \big( u(y) - u(x)\big)$ on the one-dimensional lattice with symmetric, integrable kernel $k$. We prove several results stating that under certain conditions on the kernel the operator $L$ satisfies the  curvature-dimension
condition $CD_\Upsilon (0,F)$ (recently introduced in \cite{WEB,WZ}) with some $CD$-function $F$, where attention is also 
paid to the asymptotic properties of $F$ (exponential growth at infinity and power-type behaviour near zero). We show that $CD_\Upsilon (0,F)$ implies a Li-Yau inequality for positive solutions of the heat equation associated with the operator
$L$. The Li-Yau estimate in turn leads to a Harnack inequality, from which we also derive heat kernel bounds. Our results
apply to a wide class of operators including the fractional discrete Laplacian.
\end{abstract}
\maketitle
\noindent{\bf AMS subject classification (2020):} 60J27 (primary), 60J74, 47D07 (secondary)

${}$

\noindent{\bf Keywords:} curvature-dimension inequality, $CD$-function, long-range jumps, fractional discrete Laplacian, 
Li-Yau inequality, Harnack inequality 
\section{Introduction}
\subsection{Curvature-dimension conditions and Li-Yau inequalities}
The $\Gamma$-calculus of Bakry and \'Emery is nowadays known as a powerful theory at the interface of analysis, geometry and probability theory. The central role is played by the curvature-dimension condition $CD(\kappa,d)$, which takes its name from the fact that the Laplace-Beltrami operator $\Delta$ on a complete Riemannian manifold $M$ satisfies $CD(\kappa,d)$ if and only if the Ricci curvature of the manifold is bounded from below by $\kappa$ and $d$ is greater than or equal to the topological dimension of $M$. As a consequence, the Bakry-\'Emery theory includes the celebrated Li-Yau inequality, which says that if $M$ has nonnegative Ricci curvature and topological dimension $d$, then the estimate
\begin{equation}\label{originalLiYau}
- \Delta \log u \leq \frac{d}{2t} , \quad t>0,
\end{equation} 
holds true for any  positive solution $u$ to the heat equation $\partial_t u = \Delta u$ on $(0,\infty)\times M$, where $\Delta$ again denotes the corresponding Laplace-Beltrami operator. This inequality has been established in the seminal paper \cite{LY} and its popularity is due to the fact that (sharp) parabolic Harnack inequalities can be deduced from it, see also \cite{Li}. The generalisation of \eqref{originalLiYau} to Markov diffusion operators satisfying $CD(0,d)$ has been elaborated in \cite{BBL}. This result, among many other impressive applications of curvature-dimension conditions like, e.g., certain functional inequalities can be found in the extensive monograph \cite{BGL}. 

A crucial assumption the Bakry-\'Emery theory relies on is the validity of certain chain rule formulas, in \cite{BGL} subsumed under the notion of the diffusive property. This assumption is the major obstacle if
one wants to extend the Bakry-\'Emery theory to
non-local operators. Nevertheless, inequalities like \eqref{originalLiYau} are of course also of great interest in the non-local context. To emphasize the latter, we point out that the question whether a Li-Yau inequality holds for one of the most well known non-local operators, the fractional Laplacian on $\mathbb{R}^d$, has been answered very recently in \cite{WZ2}. There, it has been observed that positive solutions to the fractional heat equation satisfy a Li-Yau type inequality with the same kind of temporal behaviour on the right-hand side as described in \eqref{originalLiYau} (a constant divided by $t$), although it has been shown in \cite{SWZ2} that  the fractional Laplacian on $\mathbb{R}^d$ fails to satisfy the Bakry-\'Emery curvature-dimension condition $CD(\kappa,N)$ for any $\kappa \in \mathbb{R}$ and $N<\infty$. 

Nonetheless, curvature-dimension conditions in the context of Li-Yau inequalities have attracted a lot of research in the last decade also in non-local settings, like the discrete one, see  \cite{BHL,DKZ,MUN,MUN2}. In all these references 
the original Bakry-\'Emery condition was replaced by alternative conditions in order to overcome the lack of chain rule in the discrete setting. In \cite{BHL}, the authors introduced the exponential curvature-dimension condition, which is based on the observation that a specific chain rule formula for the square root also holds true in the discrete framework of graphs. The Li-Yau inequality of \cite{BHL} states that
\begin{equation*}
\frac{- \Delta \sqrt{u}}{\sqrt{u}} \leq \frac{d}{2t}, \quad t>0,
\end{equation*}
for any positive solution to the heat equation on a locally finite graph on which the exponential curvature-dimension condition is satisfied with dimension term $d$ and nonnegative curvature. Here, $\Delta$ denotes the corresponding graph Laplacian. A bit later M\"unch introduced the $\Gamma^\psi$-calculus in \cite{MUN}, where $\psi\in C^1(\mathbb{R}_+)$ is a concave function. Choosing $\psi$ as the square root function, the calculus of \cite{MUN} reduces to \cite{BHL} and hence the above Li-Yau inequality (in the case of finite graphs) is a special case of the results obtained by M\"unch via his $CD\psi$-condition. For the choice of $\psi=\log$, the resulting Li-Yau inequality in \cite{MUN} takes the same form as the original one in \eqref{originalLiYau} with $\Delta$ being the graph Laplacian. The condition $CD(F;0)$ was introduced in \cite{DKZ} and it has been observed there that this condition leads to Li-Yau inequalities with a more general right-hand side. In fact, one of the main contributions of \cite{DKZ} is that the quadratic dimension part in the curvature-dimension condition (also used in \cite{BHL, MUN}) can be replaced by a more general term involving a $CD$-function $F$, which allows for a more precise analysis with regard to the right-hand side of the Li-Yau inequality. To be more concrete, the Li-Yau estimates of \cite{DKZ} read as
\begin{equation*}
- \Delta \log u \leq \varphi(t),\quad t>0,
\end{equation*}
where $\Delta$ again is the graph Laplacian and $\varphi$ denotes the so-called relaxation function associated with the $CD$-function $F$. We will repeat those definitions in the next subsection as they will also play a central role in the present paper. 

The works \cite{BHL,DKZ,MUN} have in common that Harnack inequalities can be deduced from their Li-Yau estimates in a quite similar fashion. What they also share is that they work on locally finite, if not finite, state spaces. In \cite{SWZ1} the authors have studied the classical condition $CD(0,d)$ for a special case of locally infinite graphs. Although Li-Yau estimates have not been addressed in \cite{SWZ1} this work serves as the main inspiration for the present paper alongside \cite{DKZ,WEB,WZ}.

\subsection{Setting and main results}
We study operators of the form
\begin{equation}\label{eq:operator}
L u(x) = \sum\limits_{y \in \mathbb{Z}} k(x-y) \big( u(y) - u(x)\big), \quad x \in \mathbb{Z},
\end{equation}
with a (non-trivial) kernel $k: \mathbb{Z} \to [0,\infty)$ that we assume throughout this paper to be symmetric and integrable, i.e. we have 
\begin{equation} \label{StandingHyp}
k(j)=k(-j) \text{ for any } j \in \mathbb{N} \text{  and }|k|_1 := \sum\limits_{j \in \mathbb{Z}} k(j)< \infty.
\end{equation} 
It is convenient to set $k(0)=0$. However, we will also interpret \eqref{eq:operator} as the generator of a continuous-time Markov chain with state space $\mathbb{Z}$. In this case, the transition rates $\big( q(x,y)\big)_{x,y \in \mathbb{Z}}$ are given by $q(x,y)=k(x-y)$ if $x \neq y$ and $q(x,x)=-\sum_{j\in \mathbb{Z}\setminus\{0\}}k(j)$ for any $x \in \mathbb{Z}$. Since the value of $k$ at $0$ does not play a role in \eqref{eq:operator} we can choose to set $k(0)=0$ instead of
$k(0)=-\sum_{j\in \mathbb{Z}\setminus\{0\}}k(j)$, the latter being the natural choice in the Markov chain setting.

The generator of a Markov chain naturally induces a graph structure (without loops), where here the set of vertices equals the state space $\mathbb{Z}$ and the edge weights between two distinct vertices are given by the respective transition rates. We call the resulting graph the underlying graph to $L$. Since $k$ is symmetric, the underlying graph has symmetric edge weights. We will say that the operator $L$ is generated by the kernel $k$. 

The simplest example for an operator of the type \eqref{eq:operator} is given by the discrete Laplacian $\Delta$ (from now on $\Delta$ denotes the discrete Laplacian and not the graph Laplacian), which is generated by the kernel $k(j)=1$ if $|j|=1$ and $k(j)=0$ else. The underlying graph to $\Delta$ is given by the unweighted graph with vertex set $\mathbb{Z}$, where each vertex $x \in \mathbb{Z}$ is adjacent only to $x-1$ and $x+1$. 

Our main interest focuses on much more complicated situations. A graph is called locally infinite if each of its vertices is adjacent to infinitely many other vertices. The underlying graph to $L$ is locally infinite if and only if $k$ has unbounded support. Our leading example, the case of a power-type kernel, has this property. For $\beta>0$ we define the power-type kernel as
\begin{equation} \label{powerkernel}
k_\beta(j) = \frac{1}{|j|^{1+\beta}}, \quad j \in \mathbb{Z}\setminus \{0\}.
\end{equation} 
The operator generated by $k_\beta$ will be denoted by $L_\beta$, $\beta>0$. The importance of these specific kernels follows from their relation to fractional powers of the discrete Laplacian. In \cite{CRS}, the operator $-(-\Delta)^\frac{\beta}{2}$, $\beta \in (0,2)$, has been defined by means of the semigroup method and it has been shown there that for each $\beta \in (0,2)$ the operator $-(-\Delta)^\frac{\beta}{2}$ can be written in the form \eqref{eq:operator} with kernel $k_\beta^*$ given by
\begin{equation}\label{FrakLaplacekernel}
k_\beta^*(j)=\frac{4^\frac{\beta}{2}\Gamma\big(\frac{1+\beta}{2}\big)}{\sqrt{\pi}\big| \Gamma\big(-\frac{\beta}{2}\big)\big|} \frac{\Gamma\big(|j|-\frac{\beta}{2}\big)}{\Gamma\big(|j|+1+\frac{\beta}{2}\big)}, \quad j \in \mathbb{Z}\setminus\{0\}.
\end{equation}
Here $\Gamma$ denotes the Gamma function. It is known that there exist constants $C(\beta), c(\beta)>0$ such that
\begin{equation}
c(\beta) k_\beta(j) \leq k_\beta^*(j) \leq C(\beta) k_\beta(j), \ j \in \mathbb{Z},
\label{Zusammenhang Kerne}
\end{equation}
see \cite[Theorem 1.1]{CRS}.

As explained before, the purpose of \cite{SWZ1} was to study the validity of the Bakry-\'Emery condition $CD(0,d)$, which, by definition, is satisfied if
\begin{equation*}
\Gamma_2(u,u)(x) \geq \kappa \Gamma(u,u)(x) + \frac{1}{d} \big( L u(x)\big)^2
\end{equation*}
for any $x \in \mathbb{Z}$ and for any $u$ in a sufficiently rich class of functions, like e.g. the class of bounded functions on $\mathbb{Z}$. Here, the carr\'e du champ operator $\Gamma$ and the iterated carr\'e du champ operator $\Gamma_2$ 
are defined as
\begin{equation*}
\begin{split}
\Gamma(u,v)&= \frac{1}{2}\big( L(uv)-uLv -v L u \big)\\
\Gamma_2(u,v)&= \frac{1}{2}\big( L\Gamma(u,v) - \Gamma(u,Lv) - \Gamma(v,Lu)\big),
\end{split}
\end{equation*}
where $u,v$ are elements of a suitable algebra of real-valued functions. 
The main positive result of \cite{SWZ1} is that whenever the kernel $k$ is non-increasing (on $\iN$) and has finite second moment (i.e. $\sum_{j\in \iZ} k(j) j^2 < \infty$), then $CD(0,d)$ holds true for some $d<\infty$. This particularly applies to the operator $L_\beta$ in the case of $\beta>2$. Contrariwise, it has also been obtained in \cite{SWZ1} that $CD(0,d)$ with finite $d$ is impossible to hold for $L_\beta$ if $\beta \in (0,2)$. This negative result has also been extended to the fractional discrete Laplacian.

As we already have discussed above, the classical Bakry-\'Emery condition does not seem to be suitable for deriving  Li-Yau estimates in the discrete setting. However, it is known that the condition of M\"unch with $\psi=\log$ implies the condition $CD(0,d)$, see \cite{MUN}. This seems quite problematic when one aims to derive Li-Yau inequalities for the fractional discrete Laplacian by means of curvature-dimension techniques. The notion of $CD$-functions developed in \cite{DKZ} is the way out of this misery. Very recently, the curvature-dimension condition $CD_\Upsilon(\kappa,F)$ has been introduced in the works \cite{WZ} and \cite{WEB}, where $\kappa \in \mathbb{R}$ denotes the curvature constant and $F$ a $CD$-function. This condition is in fact a consistent analogue to the classical Bakry-\'Emery condition since it implies modified logarithmic Sobolev inequalities under positive curvature bounds (see \cite{WZ}), logarithmic entropy-information inequalities  under positive curvature and finite dimension bounds (see \cite{WEB}) and Li-Yau inequalities under nonnegative curvature and finite dimension bounds in the locally finite state space case due to their strong connections to \cite{MUN} and \cite{DKZ}, see \cite[Remark 2.9]{WZ}. A key role in the $CD_\Upsilon$ condition is played by the function $\Upsilon(r)=e^r-1-r$, $r \in \mathbb{R}$, which appears in the operator
\begin{equation*}
\Psi_\Upsilon(u)(x) = \sum\limits_{y \in \mathbb{Z}} k(x-y) \Upsilon\big(u(y)-u(x)\big),\quad x\in \iZ.
\end{equation*} 
This operator serves as a replacement for the carr\'e du champ operator $\Gamma(u,u)$, which can be seen in the non-local chain rule type formula
\begin{equation*}
L (\log u) = \frac{L u }{u}- \Psi_\Upsilon (\log u),
\end{equation*} 
see \cite{DKZ} and also \cite[Lemma 2.2]{WZ} for the respective regularity assumptions in the  more general setting of locally infinite graphs.
The operator
\begin{equation*}
\Psi_{2,\Upsilon}(u)(x) = \frac{1}{2}\big( L \Psi_\Upsilon(u)(x) - B_{\Upsilon'}(u,Lu)(x)\big),\quad x\in \iZ,
\end{equation*}
serves as the counterpart for $\Gamma_2(u,u)$, where
\begin{equation*}
B_{\Upsilon'}(u,v)(x)= \sum\limits_{y \in \mathbb{Z}} k(x-y) \Upsilon'(u(y)-u(x)) \big( v(y)-v(x)\big).
\end{equation*}
In the general setting of \cite{WZ} and \cite{WEB} these operators are defined on the space of bounded functions, which will be denoted by $\ell^\infty(\mathbb{Z})$ in our situation. Note that the integrability of the kernel in fact implies that $L u (x), \Psi_\Upsilon(u)(x)$ and $\Psi_{2,\Upsilon}(u)(x)$ are well-defined for any $u \in \ell^\infty(\iZ)$ and $x \in \mathbb{Z}$. However, it is also natural to consider the larger space
\begin{equation*}
\ell_k^1(\mathbb{Z}) = \big\{ u: \mathbb{Z}\to \mathbb{R} \text{ such that } \sum\limits_{j \in \mathbb{Z}} k(j) |u(j)| <\infty \big\},
\end{equation*} 
since  $u(x+\cdot)\in \ell_k^1(\mathbb{Z})$ at $x \in \mathbb{Z}$ implies that $L u (x)$ exists.

For the following definition we will work with the original function space.
We say that $L$ satisfies $CD_\Upsilon(\kappa,F)$ if
\begin{equation*}
\Psi_{2,\Upsilon} (u) (x) \geq \kappa \Psi_\Upsilon (u) (x) + F_0 (- L u(x))
\end{equation*}
 holds at any $x \in \mathbb{Z}$ and for any $u\in \ell^\infty(\mathbb{Z})$, where $F_0=F$ on $[0,\infty)$ and $F_0(r)=0$ if $r<0$. Since we are interested in proving Li-Yau inequalities, we will be working with the condition $CD_\Upsilon(0,F)$. The function $F:[0,\infty) \to [0,\infty)$ is a so called $CD$-function which is defined by the properties that $F$ is continuous,
$F(0)=0$, the mapping $r \mapsto \frac{F(r)}{r}$ is strictly increasing in $(0,\infty)$ and $\frac{1}{F}$ is integrable at $\infty$. For each such function $F$ there exists a uniquely determined mapping $\varphi :(0,\infty) \to (0,\infty)$ that solves the ordinary differential equation $\dot{\varphi}(t)+F(\varphi(t)) = 0$ for any $t>0$ and blows up at $t=0$, see \cite[Lemma 3.5]{DKZ}. We call $\varphi$ the relaxation function to $F$.

It is straightforward to check that the specific structure of the transition rates allows for an alternative representation of \eqref{eq:operator}, which has been already used in \cite{SWZ1} and reads as
\begin{equation*}
L u(x) = \sum\limits_{j \in \mathbb{Z}} k(j) \big( u(x+j)-u(x)\big),\quad x\in \iZ.
\end{equation*}
But also for the more complicated operator $\Psi_{2,\Upsilon}$ there is a quite useful representation formula available that has been derived in \cite[Proposition 2.16]{WZ} and is given by
\begin{equation}
\Psi_{2,\Upsilon}(u) (x) = \frac{1}{2} \sum\limits_{j,l \in \mathbb{Z}} k(j) k(l) e^{u(x+l)-u(x)} \Upsilon\big( u(x+j+l)-u(x+j)-u(x+l)+u(x)\big), \quad x \in \mathbb{Z},
\label{Psi_2H Alternativ}
\end{equation}
where $u$ is a bounded function. We can read from this formula that $\Psi_{2,\Upsilon}(u)$ is nonnegative. Consequently, in order to prove $CD_\Upsilon(0,F)$ for bounded functions $u$ at the point $x\in \iZ$, we only need to consider the case where $-Lu(x) >0$. 

Based on our discussion on the results of \cite{DKZ} and \cite{SWZ1} the following questions are natural:
\begin{itemize}
\item Can we find a $CD$-function $F$ such that the fractional discrete Laplacian satisfies the condition $CD_\Upsilon(0,F)$?
\item Does every operator that is generated by a non-increasing kernel with finite second moment satisfy $CD_\Upsilon(0,F)$ for some $CD$-function $F$ that behaves quadratically near $0$?
\item Does $CD_\Upsilon(0,F)$ also imply Li-Yau inequalities in our specific context of locally infinite underlying graphs?
\end{itemize}
In this article, we will answer all of the above questions in the affirmative. 
 
We now outline the structure and main results of this article. In Section 2 we show some first basic results for kernels with finite support (Corollary \ref{Finitesupportresult}) and also for kernels satisfying a very mild integrability condition (Theorem \ref{basicCDresult}). Both these results are based on a basic lower estimate on $\Psi_{2,\Upsilon}$. In Section 3 we investigate which is the optimal $CD$-function obtained from this basic estimate alone. This is done by using Lagrange multiplier techniques. The resulting $CD$-function is a strictly convex function that behaves exponentially at $\infty$ (Theorem \ref{CDresultLagrange}).  In order to improve the asymptotic behaviour near zero we apply quite intricate calculations in Section 4 to generalize the positive result for non-increasing kernels with finite second moment from \cite{SWZ1} to the condition $CD_\Upsilon(0,F)$ under a mild additional assumption. Most notably, our result also applies to kernels that only have finite $\alpha$-th moment for $\alpha \in [1,2)$ (see Theorem \ref{Satz alphates Moment}). This makes it also applicable to power-type kernels and fractional powers of the discrete Laplacian if $\beta>1$ (Theorem \ref{SatzCDFunctionPotenzkern} and Corollary \ref{KorollarCDFunktionFrakLaplace}). By applying subtle cut-off arguments, we show that $CD_\Upsilon(0,F)$ implies a Li-Yau inequality also in our locally infinite setting (Theorem \ref{SatzLiYau}). From this Li-Yau inequality we further derive a Harnack inequality in Section 6 (Theorem \ref{SatzHarnackUngl}). 
In sharp contrast to the classical case based on the Bakry-\'Emery condition, our Harnack inequality is valid without initial
time gap provided that the $CD$-function $F$ behaves exponentially at infinity (see Remark \ref{BemerkungHarnack}), which is in fact the case for a wide class
of kernels (including those of power-type) as we know from Section 3. We finally show how this stronger version of
Harnack inequality leads to heat kernel lower bounds in a rather direct way.

We note that there are already known results on Harnack inequalities and heat kernel bounds for longe-range jump processes on discrete state spaces, see e.g.\ \cite{BBK,BL1,BL2,CK}. In contrast to these references, which use probabilistic methods,
our proof of the Harnack inequality is purely analytic. We also remark that in the case of locally finite graphs, parabolic type Harnack inequalities 
have already been established in \cite{DEL} using the method of Moser. 

We emphasize that in our work the main focus is on establishing Li-Yau inequalities,
that is, differential Harnack inequalities. Our approach based on suitable curvature-dimension conditions seems to be completely new for long-range
jump operators. In particular, we elaborate on the important role played by the
$CD$-function $F$ occurring in the $CD$-inequality and how the kernel in the jump operator
determines certain properties of $F$, which in turn are crucial with regard to the form
of the resulting Li-Yau and Harnack inequalities.

\subsection*{Acknowledgement} 
Sebastian Kr\"ass is supported by a PhD-scholarship of the ``Hanns-Seidel-Stiftung'', Germany. Fre\-deric Weber was supported by a PhD-scholarship of the ``Studien-stiftung des
deutschen Volkes'', Germany. Rico Zacher is supported by the DFG (project number 355354916, GZ ZA 547/4-2).
\section{Preliminaries and basic results}
We first repeat the definition of a $CD$-function and the corresponding relaxation function introduced in \cite[Definition 3.1 and Definition 3.6]{DKZ}.

\begin{definition}
A continuous function $F:[0,\infty) \to [0,\infty)$ is called $CD$-function, if $F(0)=0$, $\frac{F(x)}{x}$ is strictly increasing on $(0,\infty)$ and $\frac{1}{F}$ is integrable at $\infty$. The unique strictly positive solution $\varphi$ of the ODE $\dot{\varphi}(t) = -F\big(\varphi (t)\big), \ t >0,$ with $\varphi (0+) = \infty$ is called the relaxation function to $F$. 
\end{definition}

In the following, we will have a special focus on the condition $CD_\Upsilon (0,F)$, which holds if for any $u\in \ell^\infty(\mathbb{Z})$ and every $x\in \iZ$ we have
\begin{align*}
\Psi_{2,\Upsilon} (u)(x) \geq F\big(-Lu(x)\big)\quad \mbox{whenever}\; -Lu(x) \geq 0,
\end{align*}
where $F$ is a $CD$-function and $\Psi_{2,\Upsilon}$ is as in (\ref{Psi_2H Alternativ}). This condition will play a key role when proving a Li-Yau type estimate in Section \ref{sec:LiYau}. Recall that the function $\Upsilon$ is defined by
\begin{align*}
\Upsilon (x) = e^x-1-x, \ x \in \R.
\end{align*}
An important assumption that we will make throughout our proofs is justified by the following lemma.

\begin{lemma}
\label{Lemma u(0)=0}
When studying the $CD_\Upsilon(0,F)$ condition for $u\in \ell^\infty(\mathbb{Z})$ at the point $x \in \mathbb{Z}$ we may always assume without loss of generality that $x=0$ and $u(0)=0$.
\end{lemma}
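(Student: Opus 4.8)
The plan is to exploit the translation invariance of the lattice $\mathbb{Z}$ together with the fact that the operator $L$, as well as the nonlinear operators $\Psi_\Upsilon$ and $\Psi_{2,\Upsilon}$, only see \emph{differences} of function values, never the absolute values themselves. Concretely, I would argue in two steps: first reduce to the base point $x=0$ by a shift, and then normalize the value at the base point to $0$ by subtracting a constant.

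For the first step, fix $u\in\ell^\infty(\mathbb{Z})$ and $x\in\mathbb{Z}$, and define $v(y)=u(x+y)$ for $y\in\mathbb{Z}$. Since $\ell^\infty(\mathbb{Z})$ is invariant under translations, $v\in\ell^\infty(\mathbb{Z})$. Using the representation $Lu(x)=\sum_{j\in\mathbb{Z}}k(j)\big(u(x+j)-u(x)\big)$ recalled in the excerpt, one reads off $Lu(x)=Lv(0)$; similarly from $\Psi_\Upsilon(u)(x)=\sum_{j}k(j)\Upsilon\big(u(x+j)-u(x)\big)$ and from the representation formula \eqref{Psi_2H Alternativ} for $\Psi_{2,\Upsilon}$ (which is again written purely in terms of increments $u(x+\,\cdot\,)-u(x)$, etc.), one gets $\Psi_\Upsilon(u)(x)=\Psi_\Upsilon(v)(0)$ and $\Psi_{2,\Upsilon}(u)(x)=\Psi_{2,\Upsilon}(v)(0)$. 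Hence the inequality $\Psi_{2,\Upsilon}(u)(x)\ge F_0(-Lu(x))$ is \emph{equivalent} to $\Psi_{2,\Upsilon}(v)(0)\ge F_0(-Lv(0))$, so it suffices to verify $CD_\Upsilon(0,F)$ at the origin for arbitrary bounded functions.

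For the second step, given a bounded $v$, set $w(y)=v(y)-v(0)$. Then $w\in\ell^\infty(\mathbb{Z})$ and $w(0)=0$. Because $L$ annihilates constants, $Lw(0)=Lv(0)$; and since every term in $\Psi_\Upsilon$ and in the right-hand side of \eqref{Psi_2H Alternativ} depends on $v$ only through the differences $v(y)-v(z)$ — note in particular that the factor $e^{v(x+l)-v(x)}$ and the argument $v(x+j+l)-v(x+j)-v(x+l)+v(x)$ are both manifestly invariant under adding a constant to $v$ — we get $\Psi_\Upsilon(w)(0)=\Psi_\Upsilon(v)(0)$ and $\Psi_{2,\Upsilon}(w)(0)=\Psi_{2,\Upsilon}(v)(0)$. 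Thus the $CD_\Upsilon(0,F)$ inequality at $0$ for $v$ holds if and only if it holds for $w$, and $w$ has the desired normalization $w(0)=0$. Combining the two steps proves the lemma.

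I do not anticipate any real obstacle here; the content is entirely bookkeeping, and the only point that deserves a sentence of care is checking that \emph{each} of the three relevant quantities is invariant under the two operations — which is immediate from the increment-only form of the defining sums (and, for $\Psi_{2,\Upsilon}$, from the explicit formula \eqref{Psi_2H Alternativ} rather than the original definition via $B_{\Upsilon'}$, though the latter works equally well since $\Upsilon'$ is also evaluated on increments). One should also remark that $F_0$ is unchanged, so the right-hand sides match verbatim.
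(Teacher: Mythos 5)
Your proof is correct and follows essentially the same route as the paper: the paper defines $\tilde{u}(y)=u(x+y)-u(x)$ in one step and verifies via the representation formula \eqref{Psi_2H Alternativ} that $\Psi_{2,\Upsilon}(u)(x)=\Psi_{2,\Upsilon}(\tilde{u})(0)$ and $Lu(x)=L\tilde{u}(0)$, which is exactly your translation-plus-normalization argument merged into a single substitution.
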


\begin{proof}
Let $x \in \mathbb{Z}$ and $u \in \ell^\infty(\mathbb{Z})$. For $\tilde{u} (y) := u(x+y)-u(x)$ we find that $\tilde{u} (0) = 0$. Furthermore, (\ref{Psi_2H Alternativ}) yields
\begin{align*}
2 &\Psi_{2,\Upsilon}(u)(x) = \sum_{j,l \in \mathbb{Z}} k(j) k(l) e^{u(x+j)-u(x)} \Upsilon(u(x+j+l)-u(x+j)-u(x+l)+u(x)\big) \\ &= \sum_{j,l \in \mathbb{Z}} k(j) k(l) e^{\tilde{u}(j)} \Upsilon\Big(\big(u(x+j+l)-u(x)\big)-\big(u(x+j)-u(x)\big)-\big(u(x+l)-u(x)\big)\Big) \\ &= \sum_{j,l \in \mathbb{Z}} k(j) k(l) e^{\tilde{u}(j)} \Upsilon\big(\tilde{u}(j+l)-\tilde{u}(j)-\tilde{u}(l)\big) \\ &= 2 \Psi_{2,\Upsilon}(\tilde{u})(0).
\end{align*}
As we also have $Lu(x) = L \tilde{u} (0)$ and therefore $F\big(-Lu(x)\big) = F\big(-L \tilde{u}(0)\big)$ the assertation of the lemma follows.
\end{proof}

Throughout this article we denote by $F_\gamma:\mathbb{R} \to \mathbb{R}$ the function given by 
\begin{equation}
F_\gamma (x) = \vert x \vert ^\gamma 
\label{Definition F}
\end{equation}
with $\gamma \in [2,\infty)$. Note that $F_\gamma \vert_{[0,\infty)}$ is a $CD$-function.

The following lemma provides a useful upper estimate for $F_\gamma$ we will employ frequently.

\begin{lemma}
\label{Abschaetzung F Phi}
Define the function $H: \mathbb{R} \times \mathbb{R} \to \mathbb{R}$ by $H (x,y) = \frac{1}{2} e^{-x} \Upsilon(x+y)$. 
Let $\gamma \geq 2$. Then there exists a constant $\nu > 0$ depending only on $\gamma$ such that
\begin{equation*}
F_\gamma (y) \leq \nu H(x,y),\quad  x,\,y \in [0,\infty).
\end{equation*}
\end{lemma}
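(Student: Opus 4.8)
The plan is to reduce this two-variable inequality to an elementary one-variable estimate. First I would show that for each fixed $y \ge 0$ the map $x \mapsto H(x,y)$ is nondecreasing on $[0,\infty)$. Rewriting $H(x,y) = \tfrac12 e^{-x}\bigl(e^{x+y}-1-x-y\bigr) = \tfrac12\bigl(e^{y}-(1+x+y)e^{-x}\bigr)$ and differentiating in $x$ yields $\partial_x H(x,y) = \tfrac12 (x+y)e^{-x} \ge 0$ for all $x,y \ge 0$. Consequently $H(x,y) \ge H(0,y) = \tfrac12\Upsilon(y)$ for every $x \ge 0$, and it therefore suffices to produce a constant $\nu>0$, depending only on $\gamma$, such that $F_\gamma(y) = y^\gamma \le \tfrac{\nu}{2}\Upsilon(y)$ holds for all $y \ge 0$.

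For $y=0$ this is trivial since $F_\gamma(0)=0$. For $y>0$ I would study the quotient $q(y) := y^\gamma/\Upsilon(y)$, which is well defined and continuous on $(0,\infty)$ because $\Upsilon(y)>0$ there (indeed $\Upsilon(0)=0$ and $\Upsilon'(y)=e^y-1>0$ for $y>0$). The two boundary regimes are controlled by the asymptotics of $\Upsilon$: from $\Upsilon(y)=\tfrac12 y^2+O(y^3)$ as $y\to 0^+$ one gets $q(y)\to 2$ if $\gamma=2$ and $q(y)\to 0$ if $\gamma>2$ (recall $\gamma \ge 2$), while $\Upsilon(y)\ge e^y-1-y$ grows faster than any polynomial, so $q(y)\to 0$ as $y\to\infty$. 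A continuous function on $(0,\infty)$ with finite limits at both ends is bounded; hence $\nu := 2\sup_{y>0} q(y) < \infty$ is finite and depends only on $\gamma$, and the desired inequality $F_\gamma(y) \le \tfrac{\nu}{2}\Upsilon(y) \le \nu H(x,y)$ follows for all $x,y \ge 0$.

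I do not expect any serious obstacle in this argument; the only steps that need a little care are the monotonicity computation in $x$ (which guarantees the infimum over $x \ge 0$ is attained at $x=0$) and the verification that $q$ stays bounded, for which the second-order Taylor expansion of $\Upsilon$ at the origin together with its exponential growth at infinity is exactly what is required.
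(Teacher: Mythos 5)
Your proposal is correct and follows essentially the same route as the paper: the identical monotonicity computation $\partial_x H(x,y)=\tfrac12(x+y)e^{-x}\ge 0$ reduces the claim to the boundary case $x=0$, i.e.\ to comparing $F_\gamma(y)$ with $\tfrac12\Upsilon(y)$, which the paper dispatches with an "evidently there exists $\nu$" remark and you justify in detail via the bounded quotient $y^\gamma/\Upsilon(y)$.
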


\begin{proof}
We have $H (x,y) = \frac{1}{2} \big(e^y -(x+y+1)e^{-x}\big)$. Evidently, there exists a $\nu > 0$ such that
\begin{align*}
H(0,y) - \nu F_\gamma (y) = \frac{1}{2} \Upsilon (y) - \nu F_\gamma (y) >0,\quad y\ge 0. 
\end{align*}
Further, for any $y \geq 0$ we have
\begin{equation*}
\frac{\partial}{\partial x} \big(H(x,y) - \nu F_\gamma (y)\big) = \frac{1}{2} (x+y)e^{-x} >0, \ x > 0,
\end{equation*}
which yields the statement.
\end{proof}

In what follows we will also write $H(x) := H(x,x)$, i.e. 
\begin{equation}\label{functionF}
H(x)=\frac{1}{2}e^{-x}\Upsilon(2x), \quad x \in \R.
\end{equation}
This function will play a key role in our calculations.

\begin{bemerkung}
\label{BemerkungH}
Let $\gamma \geq 2$. There exists a constant $\nu=\nu(\gamma) >0$ such that
\begin{align} \label{FgammaH}
F_\gamma (x) \leq \frac{\nu}{2} e^{-x} \Upsilon(2x) = \nu H(x), \ x \in \iR.
\end{align}
This follows directly from the fact that the continuous and positive function $H$ behaves quadratically near its only zero $x=0$ and grows exponentially as $x\to \pm \infty$. We also note that \eqref{FgammaH} holds with the constant $\nu$ from
Lemma \ref{Abschaetzung F Phi}. This is clear for all $x\ge 0$. The case $x<0$ can be reduced to the first case by means of symmetry of $F_\gamma$ and the inequality $H(x)\le H(-x)$, $x\ge 0$, which is not difficult to see.
\end{bemerkung}

In order to describe the asymptotic behaviour of positive functions we will often use the notation
\begin{align*}
f(x) \sim g(x) \text{ as } x \to x_0
\end{align*}
for $x_0 \in \R \cup \{\infty\}$ if $\frac{f(x)}{g(x)} \to 1$ as $x \to x_0$. 

We next collect some asymptotic properties of the function $H$ and its derivative.

\begin{lemma} \label{Falphaprop}
The function $H':\iR \to \iR$ is a strictly increasing bijection that is given by
\begin{align*}
H'(x) & = \frac{1}{2}\big(e^x - e^{-x} + 2xe^{-x}\big).
\end{align*}
In particular, $H'(0)=0$. Moreover, $H$ and $H'$ enjoy
the subsequent asymptotic properties:
\begin{equation} \label{asympF}
H(x) \sim  x^2 \mbox{ as}\;x\to 0 \,\,  
  \mbox{and}\,\, H(x) \sim \frac{e^x}{2} \mbox{ as}\;x\to \infty,
\end{equation}
\begin{equation}  \label{asympDF}
H'(x)\sim 2 x \mbox{ as}\;x\to 0 \,\,  
 \mbox{and}\,\, H'(x)\sim \frac{e^{x}}{2} \mbox{ as}\;x\to \infty,
\end{equation}
\begin{equation}  \label{asympDF-}
(H')^{-1}(x) \sim \frac{x}{2} \mbox{ as}\;x\to 0 \,  
 \, \mbox{and}\,\, (H')^{-1}(x) \sim \log x
 \mbox{ as}\;x\to \infty
\end{equation}
and also
\begin{equation}\label{asympH-1add}
(H')^{-1}(x) - \log(2x) \to 0 \text{ as } x \to \infty.
\end{equation}
\end{lemma}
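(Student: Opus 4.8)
The plan is to prove the four asymptotic statements in Lemma \ref{Falphaprop} by elementary Taylor expansion and exact formulas, treating them essentially in the order they are stated.

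First I would record the explicit formula for $H'$. Since $H(x)=\tfrac12 e^{-x}\Upsilon(2x)=\tfrac12 e^{-x}(e^{2x}-1-2x)=\tfrac12(e^x-e^{-x}-2xe^{-x})$, differentiating gives $H'(x)=\tfrac12(e^x+e^{-x}-2e^{-x}+2xe^{-x})=\tfrac12(e^x-e^{-x}+2xe^{-x})$, which is the claimed formula, and clearly $H'(0)=\tfrac12(1-1+0)=0$. To see that $H'$ is a strictly increasing bijection of $\iR$, I would compute $H''(x)=\tfrac12(e^x+e^{-x}-2e^{-x}+... )$; more cleanly, $H''(x)=\tfrac12(e^x-e^{-x}+... )$—better to note directly that $H$ is (up to the factor $\nu$) dominated by $F_\gamma$ via Lemma \ref{Abschaetzung F Phi} and is smooth, and simply verify $H''(x)>0$ for all $x$ by a short computation: from $H'(x)=\tfrac12(e^x-e^{-x}+2xe^{-x})$ one gets $H''(x)=\tfrac12(e^x+e^{-x}-2e^{-x}+2xe^{-x})=\tfrac12(e^x-e^{-x}+2xe^{-x})$... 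I will in fact just expand $H''(x)=\tfrac12\bigl(e^x-(1-2x)e^{-x}\bigr)'$; since $e^x$ is increasing and $(1-2x)e^{-x}$ has derivative $(-2-(1-2x))e^{-x}=(2x-3)e^{-x}$, one checks $H''>0$ on all of $\iR$ after a brief case analysis, or simply observe $H''(x)=\tfrac12 e^{-x}(e^{2x}+2x-1)\ge 0$ with equality only at $x=0$ because $t\mapsto e^{2t}+2t-1$ vanishes at $0$ with positive derivative and is convex. Strict monotonicity plus $H'(-\infty)=-\infty$, $H'(+\infty)=+\infty$ (both immediate from the formula) give the bijection claim.

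Next I would establish the asymptotics. For $x\to0$: $\Upsilon(2x)=\tfrac{(2x)^2}{2}+O(x^3)=2x^2+O(x^3)$ and $e^{-x}=1+O(x)$, so $H(x)=\tfrac12 e^{-x}\Upsilon(2x)\sim x^2$; similarly $H'(x)=\tfrac12(e^x-e^{-x})+xe^{-x}=\tfrac12(2x+O(x^3))+x(1+O(x))=2x+O(x^2)$, giving $H'(x)\sim 2x$. For $x\to\infty$: in $H'(x)=\tfrac12(e^x-e^{-x}+2xe^{-x})$ the term $e^x$ dominates since $e^{-x}$ and $xe^{-x}$ tend to $0$, so $H'(x)\sim e^x/2$, and likewise $H(x)=\tfrac12(e^x-(1+2x)e^{-x})\sim e^x/2$. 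The statements \eqref{asympDF-} for the inverse then follow formally: near $0$, since $H'$ is a $C^1$ bijection with $H'(x)\sim2x$ and $H''(0)=2\neq0$, the inverse satisfies $(H')^{-1}(x)\sim x/2$; at $\infty$, writing $y=(H')^{-1}(x)$ so $x=H'(y)$ with $y\to\infty$, from $x=\tfrac12 e^y(1+o(1))$ we get $\log x = y-\log2+o(1)$, hence $(H')^{-1}(x)=y\sim\log x$.

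Finally, for the sharper statement \eqref{asympH-1add} I would refine the last computation: with $y=(H')^{-1}(x)\to\infty$ and $2x=e^y-e^{-y}+2ye^{-y}=e^y(1-e^{-2y}+2ye^{-2y})$, taking logarithms gives $\log(2x)=y+\log\bigl(1+(2y-1)e^{-2y}\bigr)=y+O(ye^{-2y})$, so $(H')^{-1}(x)-\log(2x)=y-\log(2x)=-O(ye^{-2y})\to0$ as $x\to\infty$. I do not anticipate a genuine obstacle here; the only point requiring slight care is the passage from the asymptotics of $H'$ to those of $(H')^{-1}$, which is handled by the substitution $y=(H')^{-1}(x)$ together with the already-established monotonicity and surjectivity of $H'$ guaranteeing $y\to x_0$-limits are the correct ones; the strict positivity of $H''$ (equivalently $H'$ being a genuine diffeomorphism near $0$) is what makes the near-zero inverse asymptotic rigorous.
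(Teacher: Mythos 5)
Your handling of the asymptotic statements is correct and is essentially the paper's own route: Taylor expansion of $H$ and $H'$ at $0$, dominance of $e^x$ at infinity, and then passing to the inverse by the substitution $y=(H')^{-1}(x)$ — in particular your derivation of \eqref{asympH-1add} from $2x=2H'(y)=e^y\big(1+(2y-1)e^{-2y}\big)$ is the same computation the paper performs when it writes $(H')^{-1}(x)-\log(2x)=-\log\big(2H'(y)e^{-y}\big)$ and invokes the second part of \eqref{asympDF}. Your use of the inverse function theorem for the behaviour of $(H')^{-1}$ near $0$ is an inessential variant of the paper's substitution argument.

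There is, however, a concrete error in your verification that $H'$ is strictly increasing (the first assertion of the lemma, which the paper treats as straightforward). The formula you finally settle on, $H''(x)=\tfrac12 e^{-x}\big(e^{2x}+2x-1\big)$, is not $H''$ but $H'$ itself, since $\tfrac12 e^{-x}\big(e^{2x}+2x-1\big)=\tfrac12\big(e^x-e^{-x}+2xe^{-x}\big)$; and the accompanying justification is false: a convex function vanishing at $0$ with positive derivative there is \emph{negative} on $(-\infty,0)$, not nonnegative — and indeed $H'<0$ on $(-\infty,0)$, as it must be since $H$ has its minimum at $0$. The repair is a one-line computation: $H''(x)=\tfrac12\big(e^x+3e^{-x}-2xe^{-x}\big)=\tfrac12 e^{-x}\big(e^{2x}+3-2x\big)>0$ for all $x\in\iR$, because $e^{2x}\ge 1+2x$ gives $e^{2x}+3-2x\ge 4$; this also yields $H''(0)=2$, which is the value you quote (correctly, though it contradicts your displayed formula, which would give $0$). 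With this correction, strict monotonicity together with $H'(x)\to\pm\infty$ as $x\to\pm\infty$ gives the bijection claim, and the remainder of your argument — including the passage $y=(H')^{-1}(x)\to\infty$ as $x\to\infty$, which does rely on this monotonicity — goes through. (A minor aside: your remark that $H$ is dominated by $F_\gamma$ via Lemma \ref{Abschaetzung F Phi} has the inequality backwards — that lemma bounds $F_\gamma$ by $\nu H$ — but you do not use it, so nothing is affected.)
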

\begin{proof}
The relations \eqref{asympF} and \eqref{asympDF} are straightforward. The first part of \eqref{asympDF-} follows directly from \eqref{asympDF} by substituting $y=(H')^{-1}(x)$ and observing that
\begin{align*}
\frac{2 (H')^{-1}(x)}{x}= \frac{2y}{H'(y)}.
\end{align*} 
The second statement in \eqref{asympDF-} is a consequence of \eqref{asympH-1add}. To see the latter we again substitute $y=(H')^{-1}(x)$ and write
\begin{equation*}
(H')^{-1}(x)- \log(2x) = y - \log(2 H'(y)) = - \log(2 H'(y)e^{-y}),
\end{equation*}
from which we obtain \eqref{asympH-1add} by using the second part of \eqref{asympDF}.
\end{proof}

We now turn to the consideration of the $CD_\Upsilon (0,F)$ condition. We remind the reader of our standing hypothesis \eqref{StandingHyp}.

For a given  $u:\iZ \to \iR$, we define the symmetric function
\begin{equation}
w(j) := \frac{u(j)+u(-j)}{2}, \ j \in \mathbb{Z}.
\label{w(j)}
\end{equation}
Regarding the operator $L$, under the assumption that $u(0)=0$ we see that at $x=0$
\begin{equation}
Lw(0) = \sum_{j \in \mathbb{Z}} k(j) \big(w(j)-w(0)\big) = \frac{1}{2} \sum_{j \in \mathbb{Z}} k(j) \big(u(j)+u(-j)\big) = \sum_{j \in \mathbb{Z}} k(j) u(j) = Lu(0),
\label{Gleichheit Lw und Lu}
\end{equation}
by symmetry of the kernel $k$. 

We next derive an important lower estimate for $\Psi_{2,\Upsilon}(u)(0)$ in terms of $w(j)$. Assuming as before that $u(0)=0$ (cf.\ Lemma \ref{Lemma u(0)=0}) we first observe that
\begin{align}
\Psi_{2,\Upsilon} (u)(0) &= \frac{1}{2} \sum_{j,l \in \mathbb{Z}} k(j) k(l) e^{u(j)} \Upsilon\big(u(j+l)-u(j)-u(l)\big) \notag \\ &= \frac{1}{2} \sum_{j,l \in \mathbb{Z}} k(j) k(l) e^{u(l)} \Upsilon\big(u(j+l)-u(j)-u(l)\big).
\label{Psi_2Ups Vertauschung l und j}
\end{align}
From this and the convexity of the exponential function we obtain
\begin{align}
\Psi_{2,\Upsilon} (u)(0) &= \frac{1}{2} \sum_{j,l \in \mathbb{Z}} k(j) k(l) \Big(\frac{e^{u(j)} +e^{u(l)}}{2} \Big)\Upsilon\big(u(j+l)-u(j)-u(l)\big) \notag \\ &\geq \frac{1}{2} \sum_{j,l \in \mathbb{Z}} k(j) k(l) e^{\frac{u(j)+u(l)}{2}}\Upsilon\big(u(j+l)-u(j)-u(l)\big) \notag \\ &\geq \frac{1}{2} \sum_{j \in \mathbb{Z}} k(j)^2 e^{w(j)}\Upsilon\big(-2w(j)\big) = \sum_{j \in \mathbb{Z}} k(j)^2 H \big(-w(j)\big),
\label{Basic estimate Psi_2Palme}
\end{align}
where, in the last step, we estimated the $l$-sum from below by dropping all summands with $l\neq -j$.
 
As in the case of the Bakry-\'Emery condition $CD(0,n)$ (see \cite[Theorem 2.1]{SWZ1}), the basic estimate \eqref{Basic estimate Psi_2Palme} is sufficient to show that kernels of finite support satisfy $CD_\Upsilon(0,F)$ with some explicit $CD$-function.

\begin{korollar}\label{Finitesupportresult}
Let $L$ be as in (\ref{eq:operator}). If the kernel associated with $L$ has finite support, then $L$ satisfies $CD_\Upsilon(0,F)$ with the $CD$-function $F(r)=c_0 |k|_1 H\big(\frac{r}{|k|_1}\big)$, where $c_0 = \min\limits_{j \in \mathrm{supp}(k)}k(j)>0$.
\end{korollar}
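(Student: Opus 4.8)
The plan is to derive everything from the basic lower estimate \eqref{Basic estimate Psi_2Palme}. After reducing to the normalisation $x=0$, $u(0)=0$ via Lemma \ref{Lemma u(0)=0}, this estimate reads $\Psi_{2,\Upsilon}(u)(0)\ge\sum_{j\in\mathbb{Z}}k(j)^2H(-w(j))$ with $w$ as in \eqref{w(j)}. Since $\Psi_{2,\Upsilon}(u)(0)\ge 0$ always, it suffices to treat the case $-Lu(0)\ge 0$. The key observation is that this lower bound is tailor-made for an application of Jensen's inequality to the convex function $H$.

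First I would record that $S:=\operatorname{supp}(k)$ is finite, symmetric, and does not contain $0$; since $S$ is finite and $k>0$ on $S$, the number $c_0=\min_{j\in S}k(j)$ is strictly positive, and by \eqref{StandingHyp} we have $\sum_{j\in S}k(j)=|k|_1$, so the weights $k(j)/|k|_1$, $j\in S$, form a probability vector. Using $k(j)^2\ge c_0 k(j)$ on $S$ I would then estimate
\[
\Psi_{2,\Upsilon}(u)(0)\ \ge\ c_0\,|k|_1\sum_{j\in S}\frac{k(j)}{|k|_1}\,H\big(-w(j)\big)\ \ge\ c_0\,|k|_1\,H\!\Big(\tfrac{1}{|k|_1}\textstyle\sum_{j\in S}k(j)\big(-w(j)\big)\Big),
\]
the last step being Jensen's inequality, valid because $H$ is convex ($H'$ is increasing by Lemma \ref{Falphaprop}). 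Finally, since $w(0)=0$ and $Lu(0)=Lw(0)=\sum_j k(j)w(j)$ by \eqref{Gleichheit Lw und Lu}, the argument of $H$ equals $-Lu(0)/|k|_1$, so the right-hand side is precisely $F(-Lu(0))$.

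It then remains to verify that $F(r)=c_0|k|_1 H(r/|k|_1)$ is a genuine $CD$-function. Continuity and $F(0)=H(0)=0$ are immediate. For the strict monotonicity of $r\mapsto F(r)/r$ I would reduce to that of $s\mapsto H(s)/s$ on $(0,\infty)$: with $g(s):=sH'(s)-H(s)$ one has $g(0)=0$ and $g'(s)=sH''(s)>0$ for $s>0$, where $H''>0$ follows by differentiating the explicit formula for $H'$ in Lemma \ref{Falphaprop}; hence $g>0$ on $(0,\infty)$ and $(H(s)/s)'=g(s)/s^2>0$ there. Integrability of $1/F$ at $\infty$ follows from $H(s)\sim e^s/2$ as $s\to\infty$ (see \eqref{asympF}), which forces exponential growth of $F$.

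There is no serious obstacle here; the statement is the exact analogue, in the $CD_\Upsilon$ framework, of \cite[Theorem 2.1]{SWZ1}. The only points that need care are the bookkeeping showing that finiteness of $\operatorname{supp}(k)$ is precisely what makes $c_0>0$, the normalisation of the edge weights into a probability vector so that Jensen applies, and invoking convexity (and, for the $CD$-function check, strict convexity) of $H$ from Lemma \ref{Falphaprop}.
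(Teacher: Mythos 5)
Your argument is correct and is essentially the paper's own proof: reduce to $x=0$, $u(0)=0$ via Lemma \ref{Lemma u(0)=0}, use the basic estimate \eqref{Basic estimate Psi_2Palme} together with $k(j)^2\ge c_0 k(j)$ on the finite support, and apply Jensen's inequality to the convex map $r\mapsto H(-r)$ with the probability weights $k(j)/|k|_1$, identifying the resulting argument with $-Lu(0)/|k|_1$ via \eqref{Gleichheit Lw und Lu}. Your additional verification that $F$ is indeed a $CD$-function (strict monotonicity of $F(r)/r$ from $H''>0$, integrability of $1/F$ at infinity from \eqref{asympF}) is a welcome detail the paper leaves implicit, but it does not change the route.
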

\begin{proof}
Thanks to Lemma \ref{Lemma u(0)=0} we may restrict ourselves to the situation where $u(0)=0$ and $x=0$.
We define $S= \mathrm{supp}(k)$ and employ estimate \eqref{Basic estimate Psi_2Palme} to observe that
\begin{align*}
\Psi_{2,\Upsilon}(u)(0) &\geq \frac{c_0}{2}\sum\limits_{j \in S} k(j) e^{w(j)}\Upsilon \big( -2 w(j)\big)\\
&\geq F\Big(- \sum\limits_{j \in S} k(j)w(j)\Big) =  F\big(- Lu (0)\big),
\end{align*}
where we applied Jensen's inequality (with the convex mapping $r\mapsto H(-r)$, $r\in \iR$) in the second to last step.
\end{proof}
The following example shows that the latter result is sharp. It has already been used for observing sharpness of the corresponding result regarding the Bakry-\'Emery condition for kernels with finite support, see \cite[Example 2.1]{SWZ1}.
\begin{beispiel}
For $N \in \iN$ we consider
\[
k(x) = \begin{cases}
1, & x = 2j+1 \text{ for some } -N \leq j\leq N-1\\
0,& \text{else}
\end{cases}
\quad \text{ and } \quad
u(x) = \begin{cases}
-1,& x \text{ is odd}\\
0, & x = 0\\
-2, &\text{else.}
\end{cases}
\]
Clearly, $k$ is symmetric and has finite support. Note that $u(2j + 2l +2) \neq -2$ if and only if $l =
-j-1$, which yields
\begin{align*}
\Psi_{2, \Upsilon}(u)(0) &= \frac{1}{2}\sum\limits_{j,l=-N}^{N-1} e^{-1} \Upsilon\big(u(2j + 2l +2)+1+1\big)= \frac{1}{2}\sum\limits_{j=-N}^{N-1} \frac{\Upsilon(2)}{e} = \frac{N\,\Upsilon(2)}{e}.
\end{align*}
Further, we observe that 
\begin{equation*}
Lu (0) = \sum_{j=-N}^{N-1} k(2j+1)u(2j+1)=-2N.
\end{equation*}
Using that $|k|_1=2N$ and $c_0=1$ (where $c_0$ is defined as in Corollary \ref{Finitesupportresult}), we conclude that
\begin{equation*}
F\big(-L u(0)\big) = \frac{N\,\Upsilon(2)}{e} =  \Psi_{2, \Upsilon}(u)(0),
\end{equation*}
where $F$ denotes the $CD$-function of Corollary \ref{Finitesupportresult}.
\end{beispiel}

We now give a simple but quite important result for the case of general kernels.

\begin{satz}\label{basicCDresult}
Let the operator $L$ be given by (\ref{eq:operator}). Suppose that the kernel associated with $L$ satisfies $\hat{C}_\delta(k):=\sum_{j \in \mathbb{Z}} k(j)^{1-\delta} < \infty$ for some $\delta \in (0,1)$. Then, for every $x\in \iZ$ and any
$u\in \ell^\infty(\mathbb{Z})$ there holds
\begin{equation*}
\Psi_{2,\Upsilon}(u)(x) \geq c\big \vert Lu(x)\big \vert ^\gamma,
\end{equation*}
where $\gamma = {\frac{1+\delta}{\delta}}>2$ and $c>0$ is some constant. In particular, 
$L$ satisfies the condition $CD_\Upsilon(0,F)$ with the $CD$-function $F=c F_\gamma$.
\end{satz}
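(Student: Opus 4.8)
The plan is to start from the basic lower estimate \eqref{Basic estimate Psi_2Palme}, namely $\Psi_{2,\Upsilon}(u)(0)\ge \sum_{j\in\iZ}k(j)^2 H(-w(j))$, combined with the identity $Lu(0)=Lw(0)=\sum_{j\in\iZ}k(j)w(j)$ from \eqref{Gleichheit Lw und Lu}, and to reduce the problem to an optimization: among all real sequences $(w(j))$ with a prescribed value of $\sum_j k(j)w(j)=-Lu(0)=:s>0$, minimize $\sum_j k(j)^2 H(-w(j))$, and show the minimum is bounded below by $c\,s^\gamma$ with $\gamma=(1+\delta)/\delta$. By Remark \ref{BemerkungH} (applied with this $\gamma\ge 2$) it suffices instead to bound $\sum_j k(j)^2 H(-w(j))$ below by $c'\sum_j k(j)^2 F_\gamma(-w(j))=c'\sum_j k(j)^2 |w(j)|^\gamma$ — wait, that inequality goes the wrong way, since $F_\gamma\le\nu H$. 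So instead I would argue directly with $H$, using only that $H(r)\ge c_1 \min(r^2,e^{|r|})$ or, more simply, the one-sided bound $H(-w(j))\ge H(w(j))$ for $w(j)\le 0$ together with $H(r)\ge c_1 r^2$ for all $r$ — actually the cleanest route is: since $H$ is convex, nonnegative, even-ish with $H(r)\sim r^2$ near $0$, there is $c_1>0$ with $H(r)\ge c_1\min(|r|,|r|^\gamma)$...

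Let me restructure. First I would record that $\sum_j k(j)^2 \le |k|_1^{2-(1-\delta)}\cdot\sup_j k(j)^{\,?}$ — no. The relevant quantity is $\hat C_\delta(k)=\sum_j k(j)^{1-\delta}<\infty$, which forces $k(j)\to 0$ and in particular $k(j)\le \hat C_\delta(k)^{1/(1-\delta)}<\infty$, so $\sup_j k(j)<\infty$ and $\sum_j k(j)^2=\sum_j k(j)^{1-\delta}k(j)^{1+\delta}\le \hat C_\delta(k)\,(\sup_j k(j))^{1+\delta}<\infty$. Now fix the constraint $s:=\sum_j k(j)w(j)$; we may assume $w(j)\le 0$ for all $j$ (replacing $w(j)$ by $\min(w(j),0)$ only decreases the constraint value toward $s$ while, since $H(-r)=H(|r|)\ge H(r)$ for $r\ge0$... hmm, actually $H(-w(j))$ with $w(j)>0$ equals $H(-w(j))$; one checks $H$ is increasing on $[0,\infty)$ and $H(r)\le H(-r)$ for $r\ge 0$, so dropping positive parts only helps). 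Then with $r(j):=-w(j)\ge 0$ and $\sum_j k(j)r(j)=-s=:\sigma>0$, I want $\sum_j k(j)^2 H(r(j))\ge c\sigma^\gamma$. Using $H(r)\ge c_1 r^2$ for $r\in[0,1]$ and $H(r)\ge c_1 r^\gamma$ for $r\ge 1$ (both valid since $H(r)\sim r^2$ near $0$, $H$ grows exponentially, and $\gamma\ge 2$), so $H(r)\ge c_1 r^\gamma$ whenever... no, $r^2\ge r^\gamma$ on $[0,1]$. So on $[0,1]$, $H(r)\ge c_1 r^2 \ge c_1 r^\gamma$ fails. The fix: $H(r)\ge c_1 r^2$ everywhere suffices if we also use Hölder to pass from the $\ell^1$-type constraint to an $\ell^2$-type sum.

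The core estimate is a Hölder/Jensen argument: writing $\sigma=\sum_j k(j)r(j)=\sum_j k(j)^{1-\delta}\cdot\big(k(j)^{\delta}r(j)\big)$ and applying Hölder with exponents $p=\gamma=\frac{1+\delta}{\delta}$ and $q=\frac{1+\delta}{1}$ (so $1/p+1/q=\frac{\delta}{1+\delta}+\frac{1}{1+\delta}=1$), one gets
\[
\sigma \le \Big(\sum_j k(j)^{(1-\delta)q}\Big)^{1/q}\Big(\sum_j k(j)^{\delta p} r(j)^{p}\Big)^{1/p}
= \hat C_{\delta}(k)^{1/q}\Big(\sum_j k(j)^{1+\delta}r(j)^{\gamma}\Big)^{1/p},
\]
since $(1-\delta)q=(1-\delta)(1+\delta)=1-\delta^2$... that is not $1-\delta$, so I'd instead choose the Hölder split so the first factor is exactly $\sum k(j)^{1-\delta}$: take $a_j=k(j)^{1-\delta}$ as the "weight" and write $\sigma=\sum a_j\cdot\big(k(j)^{\delta}r(j)\big)$, apply the weighted power-mean / Jensen inequality $\big(\sum a_j b_j/\sum a_j\big)^\gamma\le \sum a_j b_j^\gamma/\sum a_j$ with $b_j=k(j)^{\delta}r(j)$, giving $\sigma^\gamma\le \hat C_\delta(k)^{\gamma-1}\sum_j k(j)^{1-\delta}k(j)^{\delta\gamma}r(j)^\gamma=\hat C_\delta(k)^{\gamma-1}\sum_j k(j)^{1-\delta+\delta\gamma}r(j)^\gamma$. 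Since $\delta\gamma=1+\delta$, the exponent is $1-\delta+1+\delta=2$. Hence $\sigma^\gamma\le \hat C_\delta(k)^{\gamma-1}\sum_j k(j)^{2}r(j)^\gamma\le \hat C_\delta(k)^{\gamma-1}\sum_j k(j)^2\big(c_1^{-1}H(r(j))\big)$ — using $r^\gamma\le C H(r)$ for $r\ge 0$, which holds because $H(r)\sim r^2$ near $0$ (so $r^\gamma/H(r)\to 0$ as $r\to 0$ since $\gamma\ge 2$) and $H$ grows exponentially (so $r^\gamma/H(r)\to 0$ as $r\to\infty$), making $r^\gamma/H(r)$ bounded on $[0,\infty)$. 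Combining with \eqref{Basic estimate Psi_2Palme} yields $\Psi_{2,\Upsilon}(u)(0)\ge c\,\sigma^\gamma=c\,|Lu(0)|^\gamma$ with $c=\big(c_1^{-1}\hat C_\delta(k)^{\gamma-1}\big)^{-1}$... $= c_1\hat C_\delta(k)^{1-\gamma}$, which is positive; and since $\Psi_{2,\Upsilon}(u)(0)\ge 0$ always, this also covers the case $Lu(0)\ge 0$. The last sentence of the statement (that $L$ satisfies $CD_\Upsilon(0,cF_\gamma)$) is then immediate from the definition, once one notes $F_\gamma|_{[0,\infty)}$ is a $CD$-function and $\gamma=(1+\delta)/\delta>2$ because $\delta<1$.

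The main obstacle is pinning down the correct Hölder/Jensen exponents so that the kernel-power bookkeeping closes exactly onto $\sum_j k(j)^2$ and onto the finite quantity $\hat C_\delta(k)$ — the identity $1-\delta+\delta\gamma=2$ with $\gamma=(1+\delta)/\delta$ is what makes everything fit, and it is the only place where the precise value of $\gamma$ is forced. The auxiliary pointwise facts ($\sup_j k(j)<\infty$; $H(r)\ge c_1 r^2$ and $r^\gamma\le CH(r)$ on $[0,\infty)$; the reduction to $w(j)\le 0$) are routine given Lemma \ref{Falphaprop} and Remark \ref{BemerkungH}. One should also double-check the reduction $w(j)\to\min(w(j),0)$: it replaces $\sum k(j)w(j)$ by something $\le$ the original, but since we only need a lower bound on $\Psi_{2,\Upsilon}$ in terms of $-Lu(0)$ and we are in the regime $-Lu(0)\ge 0$, one argues on the symmetrized/truncated sequence directly, or — cleaner — one avoids the truncation entirely by using $H(-w(j))\ge c_1 w(j)^2$ and $|w(j)|^\gamma\le C H(-w(j))$ for all real $w(j)$ (valid since $H(-r)=H(|r|)$ behaves like $r^2$ near $0$ and exponentially at $\pm\infty$), then Hölder on $\sum_j k(j)|w(j)|\ge|\sum_j k(j)w(j)|=-Lu(0)$ as above.
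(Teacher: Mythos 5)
Your final argument is essentially the paper's own proof: the same weighted Jensen inequality with weights $k(j)^{1-\delta}$ (the bookkeeping $1-\delta+\delta\gamma=2$, $\hat{C}_\delta(k)^{\gamma-1}=\hat{C}_\delta(k)^{1/\delta}$), followed by the bound $F_\gamma\le\nu H$ of Remark \ref{BemerkungH} and the basic estimate \eqref{Basic estimate Psi_2Palme}, with no sign reduction needed since $F_\gamma$ is even and convex. The only slip is the parenthetical claim $H(-r)=H(|r|)$ ($H$ is not even), but the inequality you actually use, $|w(j)|^\gamma\le \nu H(-w(j))$, is precisely Remark \ref{BemerkungH} evaluated at $-w(j)$, so the argument stands.
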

\begin{proof}
We may again restrict ourselves to the situation where $u(0)=0$ and $x=0$.
Applying Jensen's inequality to the function $F_\gamma(r)=|r|^\gamma$, $r\in \iR$, we have
\begin{align*}
F_\gamma\big(L u(0)\big) = F_\gamma \big(L w (0)\big) &= F_\gamma\Big( \sum_{j \in \mathbb{Z}} \frac{k(j)^{1-\delta}\hat{C}_\delta(k)k(j)^\delta w(j)}{\hat{C}_\delta(k)}\Big) \\
&\leq \frac{1}{\hat{C}_\delta(k)}\sum_{j \in \mathbb{Z}} k(j)^{1-\delta} F_\gamma\big( \hat{C}_\delta(k) k(j)^\delta w(j)\big) \\
&= \hat{C}_\delta(k)^\frac{1}{\delta} \sum_{j \in \mathbb{Z}} k(j)^2 F_\gamma\big(w(j)\big) \leq \hat{C}_\delta(k)^\frac{1}{\delta} \nu \Psi_{2,\Upsilon} (u) (0),
\end{align*}
where the last estimate follows from inequality \eqref{FgammaH} (which applies as $\gamma > 2$) and the basic estimate (\ref{Basic estimate Psi_2Palme}). This proves the claimed inequality with $c=\hat{C}_\delta(k)^{-\frac{1}{\delta}} \nu^{-1}$.
\end{proof}
\begin{bemerkung}\label{rem2basicCDresult}
(i) In the subsequent sections we will improve this result for a large class of kernels, see Theorem \ref{Satz alphates Moment}. Note however, that Theorem \ref{basicCDresult} does not require any monotonicity assumptions.  This is quite remarkable having the negative criterion from \cite[Theorem 5.3]{SWZ1} in mind, which says that $CD(0,d)$ is violated
for all $d<\infty$ if the support of the kernel is sufficiently thin. Consequently, Theorem \ref{basicCDresult} demonstrates that it is useful to consider other $CD$-functions than only the quadratic one.

(ii) Theorem \ref{basicCDresult} applies to the important example of the power-type kernel (see \eqref{powerkernel}). In fact, choosing $\delta=\frac{\alpha}{1+\beta}$ with $0<\alpha<\beta$, the condition $\hat{C}_\delta(k)<\infty$ of Theorem \ref{basicCDresult} is equivalent to the power-type kernel having finite $\alpha$-th moment, which holds since $\alpha<\beta$.
\end{bemerkung}
\section{Optimizing the $CD$-function from the basic estimate}
\label{Chapter3}
In this section we determine the optimal $CD$-function one can get from the basic estimate \eqref{Basic estimate Psi_2Palme} alone, i.e.\ we aim to find the best possible $CD$-function $G:[0,\infty)\to [0,\infty)$ such that
\begin{equation}\label{estimatetooptimize}
\sum\limits_{j \in \Z} k(j)^2 H(-w(j)) \geq G \big(-L w(0)\big)
\end{equation}
if $- L w (0) \geq 0$.
We will use the notation $S:= \mathrm{supp}(k)$ throughout this section.

Primarily, we are interested in the behaviour of $G$ for large arguments. For small arguments, we will be able to show in Theorem \ref{Satz alphates Moment} that for a large class of kernels the $CD$-function behaves quadratically near zero (which is the best possible power-type behaviour, cf. \cite[Remark 2.4]{WEB}). However, a quadratic behaviour near zero
cannot be reached from the basic estimate \eqref{Basic estimate Psi_2Palme} alone if $k$ has unbounded support. 

Indeed, for bounded $u$ and the corresponding symmetrization $w$ (cf. \eqref{w(j)}) we multiply \eqref{estimatetooptimize} with $\lambda^{-2}$ and replace $u$ by $\lambda u$, $\lambda>0$. Sending $\lambda\to 0$ yields 
\begin{equation}\label{necconditionQbehavior}
\sum\limits_{j \in \Z} k(j)^2w(j)^2 \geq C \Big( \sum\limits_{j \in \Z} k(j)w(j)\Big)^2
\end{equation}
as a necessary condition for the validity of \eqref{estimatetooptimize} when assuming for contradiction that $G(r)\sim C r^2$ as $r\to 0$, where $C>0$ is some constant. Assuming that the support of $k$ is not finite, we can  write $S\cap \N=\{s(j):j \in \N\}$, where $s:\N \to \N$ is some strictly increasing mapping. We denote by $s^{-1}:s(\N)\to \N$ the inverse mapping of $s$. For $N \in \N$ and $j \in \N$ we define
\begin{equation*}
u_N(j) = \left\{\begin{array}{ll}
\frac{1}{k(j)s^{-1}(j)} ,& j\in S \cap \mathbb{N}, j \leq s(N),\\
0,& \text{else }
\end{array}\right.,
\end{equation*}
set $u_N(0)=0$ and extend $u_N$ to a symmetric mapping on $\Z$. Denoting by $w_N$ the respective symmetrization, we hence have $u_N=w_N$ and observe that
\begin{equation*}
\sum\limits_{j \in \Z} k(j)w_N(j) = \sum\limits_{j \in S, |j|\leq s(N)} \frac{1}{s^{-1}(|j|)} =2 \sum\limits_{j=1}^N \frac{1}{j}.
\end{equation*}
Moreover, we have
\begin{equation*}
\sum\limits_{j \in \Z} k(j)^2w_N(j)^2 = \sum\limits_{j \in S, |j|\leq s(N)} \frac{1}{s^{-1}(|j|)^2} = 2 \sum\limits_{j=1}^N \frac{1}{j^2}.
\end{equation*}
Sending $N\to \infty$ hence yields a contradiction since the left-hand side of \eqref{necconditionQbehavior} converges while the right-hand side is unbounded.

After these preliminary thoughts about natural limitations of  inequality \eqref{estimatetooptimize} we now come back to the aim of this section. First, we examine the case where the kernel $k$ has  finite support and then extend the respective result by approximation to more general kernels. 

We suppose first that the support $S$ is a finite set. Let $X=\{v:S \to \R\}$ be the space of real-valued functions defined on $S$. On $X$ we define the functionals
\[
\Phi(v)=\sum_{j\in S } k(j)^2 H\big(v_j\big)\quad \mbox{and}\quad
\Lambda(v)=\sum_{j\in S } k(j) v_j, \quad v=(v_j)_{j\in S}.
\]
Let $a>0$ be arbitrarily fixed and consider on $X$ the minimization problem
\begin{equation} \label{minproblem}
\Phi(v) \rightarrow \mbox{Min!}\quad \mbox{under the constraint}\;\Lambda(v)=a.
\end{equation}
We will show that \eqref{minproblem} has a unique (global) minimizer $v(a)$. Defining then $G:(0,\infty) \to [0,\infty)$ by
\[
G(a):=\Phi(v(a)), \quad a>0,
\]
it follows that \eqref{estimatetooptimize} is satisfied for this $G$.


The constrained minimization problem \eqref{minproblem} can  be solved  by the method of Lagrange multipliers. The Fr\'echet
derivatives of $\Phi$ and $\Lambda$, respectively, at $v\in X$ are given by
\[
\Phi'(v)\bar{v}=\sum_{j\in S} k(j)^2 H'\big({v}_j\big)\bar{v}_j
\;\; \mbox{and}\;\; \Lambda'(v)\bar{v}=\Lambda(\bar{v})=\sum_{j\in S} k(j) \bar{v}_j,\;
\bar{v}=(\bar{v}_j)\in X.
\]
Clearly $\Lambda'(v)\neq 0$ for all $v\in X$ and thus 
$v_*=(v_j)\in X$ is a critical point for the minimization problem \eqref{minproblem} if and only if 
$\Lambda(v_*)=a$ and there exists a number $\lambda\in \iR$ such that
\[
\Phi'(v_*)=\lambda \Lambda'(v_*),
\] 
that is
\[
\sum_{j\in S} k(j)^2 H'\big(v_j\big)\bar{v}_j=\lambda \sum_{j\in S} k(j) \bar{v}_j,\quad
\mbox{for all}\;\bar{v}=(\bar{v}_j)\in X.
\]
Taking for $\bar{v}$ the unit vectors in $X$, we infer that
\[
k(j) H'(v_j)=\lambda,\quad j\in S.
\]
By Lemma \ref{Falphaprop}
it follows that
\begin{equation} \label{wjformula}
v_j=(H')^{-1}\Big(\frac{\lambda}{k(j)}\Big),\quad j\in S.
\end{equation}
Since $H'(0)=0$ and $H'$ is strictly increasing, the inequality $\lambda\le 0$ would
imply that $v_j\le 0$ for all $j\in S$, which is impossible in view of the constraint
$\Lambda(v)=a>0$. Therefore, we can assume that $\lambda>0$.
Inserting \eqref{wjformula} into the constraint $\Lambda(v)=a$ we obtain
\begin{equation} \label{rhodef}
\rho(\lambda):= \sum_{j\in S}
 k(j) (H')^{-1}\Big(\frac{\lambda}{k(j)}\Big)=a.
\end{equation}
Recall that $H'(0)=0$ and $H'$ is a strictly increasing bijection, see Lemma \ref{Falphaprop}. Therefore
$\rho$ is a strictly increasing bijection from $[0,\infty)$ onto itself, where $\rho(0)=0$.
Consequently, \eqref{rhodef} possesses the unique solution 
\[
\lambda=\rho^{-1}(a),
\]
that is, the Lagrange multiplier $\lambda$ is uniquely determined. Inserting this into \eqref{wjformula} we see that there is only one critical point $v_*=(v_j)\in X$, which is given by 
\begin{equation} \label{wjexplicit}
v_j=(H')^{-1}\Big(\frac{\rho^{-1}(a)}{k(j)}\Big),\quad j\in S.
\end{equation}

Moreover, the set of all $v\in X$ for which $\Lambda(v)=a$ holds is a convex set, by linearity of $\Lambda$.
In addition, $\Phi$ is convex, even strict convex, since $H$ possesses this property. Invoking the
theory of convex optimization, it follows that the only critical point $v_*$ is the (global) minimizer
for the extremal problem \eqref{minproblem}. Hence we obtain the desired functional inequality
\eqref{estimatetooptimize} with the optimal (!) function
\begin{equation} \label{Gfunction}
G(a)=\sum_{j\in S} k(j)^2 H\Big( (H')^{-1}\Big(\frac{\rho^{-1}(a)}{k(j)}\Big) \Big),\quad a>0.
\end{equation}
In particular, we can extend $G$ onto $[0,\infty)$ continuously by setting $G(0)=0$.

Differentiating $G$ yields a much simpler representation formula for $G$. In order to derive it, we first calculate the derivative of $\rho$. We have
\begin{equation*}
\rho'(\lambda)= \sum\limits_{j \in S} \big( (H')^{-1}\big)'\Big( \frac{\lambda}{k(j)}\Big) 
\end{equation*}
and hence also 
\begin{equation}\label{inverserhoDerivative}
(\rho^{-1})'(a) = \frac{1}{\sum\limits_{j \in S} \big( (H')^{-1}\big)'\Big( \frac{\rho^{-1}(a)}{k(j)}\Big)}.
\end{equation}
Then, the derivative of $G$ can be calculated as follows
\begin{align*}
G'(a) &= \sum\limits_{j \in S} k(j)^2 \frac{d}{da} H\Big( (H')^{-1}\Big(\frac{\rho^{-1}(a)}{k(j)}\Big) \Big)\\
&= \sum\limits_{j \in S} k(j) H'\Big( (H')^{-1}\Big( \frac{\rho^{-1}(a)}{k(j)}\Big)\Big) \big( (H')^{-1}\big)'\Big(\frac{\rho^{-1}(a)}{k(j)}\Big) \big(\rho^{-1}\big)'(a) \\
&= \rho^{-1}(a)\big(\rho^{-1}\big)'(a)\, \sum\limits_{j \in S} \big( (H')^{-1}\big)'\Big(\frac{\rho^{-1}(a)}{k(j)}\Big)\\
&= \rho^{-1}(a),
 \end{align*}
where we applied \eqref{inverserhoDerivative} in the last step. Since $G(0)=0$, we thus observe that
\begin{equation}\label{intformulaG}
G(a)= \int_0^a \rho^{-1}(s) \mathrm{d}s.
\end{equation}


We consider now the general case where the support $S$ of $k$ can be infinite. Under an additional assumption
on the kernel, we will reduce this case to the case of finite support by looking at truncated sums and taking limits
in the final step. 

For $v\in \ell_k^1(\mathbb{Z})$ with $\sum_{j\in \iZ} k(j)v(j)>0$, we define
\begin{equation*}
A=\,\sum_{j\in \iZ} k(j)v_j.
\end{equation*}
Setting $\iZ_N=\{j\in \iZ:\,|j|\le N\}$ for $N\in \iN$, it follows that there exists some $N_0 \in \N$ such that
\begin{equation} \label{truncpos}
A_N:=\,\sum_{j\in \iZ_N} k(j)v_j >0
\end{equation}
whenever $N\geq N_0$. For any $N\geq N_0$, we define $S_N := \Z_N \cap S$ 
and
\[
\rho_N(\lambda)= \sum_{j\in S_N}
 k(j) (H')^{-1}\Big(\frac{\lambda}{k(j)}\Big),\quad \lambda>0,
\]
as well as
\begin{equation} \label{GNDef}
G_N(a)=\sum_{j\in S_N} k(j)^2 H\Big( (H')^{-1}\Big(\frac{\rho_N^{-1}(a)}{k(j)}\Big) \Big) = \int_0^a \rho_N^{-1}(s)\mathrm{d}s,\quad a>0.
\end{equation}
From what we know in the case of finite support and by \eqref{truncpos}, it follows that $G_N$ is well-defined and that
\begin{equation} \label{truncest1}
\sum_{j\in \iZ_N} k(j)^2 H(v_j)\ge 
G_N\Big(\sum_{j\in \iZ_N} k(j) v_j\Big)=G_N(A_N).
\end{equation}
By positivity of $H$, we can send $N\to \infty$ on the left-hand side of the inequality \eqref{truncest1}, thereby
obtaining that
\begin{equation} \label{truncest2}
\sum_{j\in \iZ} k(j)^2 H\big(v_j\big)\ge 
G_N(A_N).
\end{equation}

Next, we aim to define the function $\rho$ on $[0,\infty)$ as in  \eqref{rhodef}. Clearly, we then have $\rho(0)=0$. Now let $\lambda>0$.
By the integrability of $k$ we have $k(j)\to 0$ as $|j|\to \infty$. So the argument $\lambda/k(j)$ becomes large for
large $|j|$. Using the asymptotic properties of $(H')^{-1}$
we can prove the following auxiliary result which shows, in particular, that $\rho$ is well-defined.
\begin{lemma}  \label{rhoproperties}
Suppose that the kernel $k$ satisfies the additional condition
\begin{equation} \label{conditionlog}
\sum_{j\in S} k(j) \log\big(2+\frac{1}{k(j)}\big)<\infty.
\end{equation}
Then $\rho:[0,\infty) \to [0,\infty)$, defined as
\begin{equation*}
\rho(\lambda)= \sum_{j\in S}
 k(j) (H')^{-1}\Big(\frac{\lambda}{k(j)}\Big),
\end{equation*}
if $\lambda>0$ and $\rho(0)=0$, is a strictly increasing and continuous bijection with the asymptotic behaviour
\begin{equation}\label{additiveasymprho}
\rho(\lambda)-|k|_1 \log(2\lambda) \to \sum\limits_{j \in S} k(j)\log(\frac{1}{k(j)}\big) \text{ as } \lambda \to \infty.
\end{equation}
Moreover, there exist constants $C_1, C_2>0$
such that 
\begin{equation} \label{rhopropest}
C_1 \big(\bar{\rho}_1(\lambda)+\bar{\rho}_2(\lambda)\big) \le \rho(\lambda) \le  C_2 \big(\bar{\rho}_1(\lambda)+\bar{\rho}_2(\lambda)\big),\quad \lambda>0,
\end{equation}
where
\begin{equation*}
\bar{\rho}_1(\lambda) = 
\sum_{j\in S,\,k(j)<\lambda} k(j) \log\Big(1+\frac{\lambda}{k(j)}\Big)
\end{equation*}
and
\begin{equation*}
\bar{\rho}_2(\lambda)= \sum_{j\in \iZ,\,k(j)\ge \lambda} \lambda .
\end{equation*}
\end{lemma}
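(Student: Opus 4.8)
The plan is to analyze the series $\rho(\lambda)=\sum_{j\in S}k(j)(H')^{-1}(\lambda/k(j))$ by exploiting the two-sided asymptotic control on $(H')^{-1}$ provided by Lemma \ref{Falphaprop}. First I would establish the comparison estimate \eqref{rhopropest}. For this, split the sum at the threshold $k(j)=\lambda$. For the ``small weight'' terms $k(j)<\lambda$, the argument $\lambda/k(j)>1$ is bounded away from $0$, so by \eqref{asympDF-} (second part) together with continuity and positivity of $(H')^{-1}$ on $(0,\infty)$, there are constants $0<c\le C$ with $c\log(1+\lambda/k(j))\le (H')^{-1}(\lambda/k(j))\le C\log(1+\lambda/k(j))$ uniformly in such $j$; multiplying by $k(j)$ and summing gives the $\bar\rho_1$ contribution. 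For the ``large weight'' terms $k(j)\ge\lambda$, the argument $\lambda/k(j)\in(0,1]$ stays bounded, so by the first part of \eqref{asympDF-} (or just continuity with $(H')^{-1}(0)=0$ and the derivative at $0$) we get $(H')^{-1}(\lambda/k(j))\asymp \lambda/k(j)$, hence $k(j)(H')^{-1}(\lambda/k(j))\asymp\lambda$; summing over these finitely many $j$ (finitely many because $k(j)\to0$) yields the $\bar\rho_2$ contribution. The finiteness of $\rho(\lambda)$ for each $\lambda>0$ then follows from \eqref{conditionlog}: indeed $\bar\rho_1(\lambda)\le \log(1+\lambda)\sum_{k(j)<\lambda}k(j)\log(2+1/k(j))\cdot(\text{const})<\infty$ — more carefully, $\log(1+\lambda/k(j))\le \log((1+\lambda)(1+1/k(j)))\le \log(1+\lambda)+\log(2+1/k(j))$ for $k(j)<\lambda\le$ anything, wait I must be slightly careful, but the bound $\log(1+\lambda/k(j))\lesssim_\lambda \log(2+1/k(j))$ holds for every fixed $\lambda$ — and $\bar\rho_2(\lambda)$ is a finite sum.

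Next I would prove the precise asymptotic \eqref{additiveasymprho}. Write $\rho(\lambda)-|k|_1\log(2\lambda)=\sum_{j\in S}k(j)\big[(H')^{-1}(\lambda/k(j))-\log(2\lambda/k(j))\big]+\sum_{j\in S}k(j)\log(1/k(j))$, using $\log(2\lambda)=\log(2\lambda/k(j))+\log(k(j))$ and $|k|_1=\sum_{j\in S}k(j)$. By \eqref{asympH-1add}, the bracketed term $g_j(\lambda):=(H')^{-1}(\lambda/k(j))-\log(2\lambda/k(j))\to 0$ as $\lambda\to\infty$ for each fixed $j$, since $\lambda/k(j)\to\infty$. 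So the remaining task is to pass the limit $\lambda\to\infty$ inside the sum $\sum_j k(j)g_j(\lambda)$. This is the step I expect to be the main obstacle: one needs a dominating function independent of $\lambda$. Here I would use that $s\mapsto (H')^{-1}(s)-\log(2s)$ extends to a bounded continuous function on $[1,\infty)$ (it tends to $0$ at $\infty$ by \eqref{asympH-1add} and is continuous on the compact interval near $s=1$), but for small $s=\lambda/k(j)<1$ (the large-weight terms) one instead bounds $|g_j(\lambda)|\le (H')^{-1}(1)+|\log(2\lambda/k(j))|$, and since there are only finitely many such $j$ for each $\lambda$ this is harmless — more precisely I would handle the finitely many ``large weight'' indices separately (their total $k$-mass is bounded and $g_j(\lambda)\to0$ pointwise as $\lambda\to\infty$ since eventually $k(j)<\lambda$), and for the ``small weight'' indices use the uniform bound $|g_j(\lambda)|\le M:=\sup_{s\ge1}|(H')^{-1}(s)-\log(2s)|<\infty$, giving the integrable majorant $k(j)M$. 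Dominated convergence then yields $\sum_j k(j)g_j(\lambda)\to0$, proving \eqref{additiveasymprho}.

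Finally, for the structural claims: $\rho(0)=0$ by definition; $\rho$ is strictly increasing because each summand $\lambda\mapsto k(j)(H')^{-1}(\lambda/k(j))$ is strictly increasing (as $(H')^{-1}$ is, by Lemma \ref{Falphaprop}); continuity on $(0,\infty)$ follows from a locally uniform bound on the tail of the series (e.g. on $[\lambda_0,\lambda_1]$ the tail is dominated by $\sum_{k(j)<\lambda_1}k(j)\log(2+1/k(j))\cdot C(\lambda_1)$ which is summable), and continuity at $0$ from $0\le\rho(\lambda)\le C_2(\bar\rho_1(\lambda)+\bar\rho_2(\lambda))\to0$ as $\lambda\to0^+$ via dominated convergence in $\bar\rho_1$ and the fact that $\bar\rho_2(\lambda)=0$ once $\lambda\le\min_{|j|\le R}k(j)$ shrinks below all remaining weights — actually $\bar\rho_2(\lambda)\le \lambda\cdot\#\{j:k(j)\ge\lambda\}\to0$. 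Surjectivity onto $[0,\infty)$ then follows since $\rho$ is continuous, $\rho(0)=0$, and $\rho(\lambda)\to\infty$ as $\lambda\to\infty$ by \eqref{additiveasymprho}. This completes the proof.
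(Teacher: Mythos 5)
Your proposal is correct and follows essentially the same route as the paper: the same split of the sum at $k(j)<\lambda$ versus $k(j)\ge\lambda$ combined with the two-sided bounds $(H')^{-1}(x)\asymp x$ on $(0,1]$ and $(H')^{-1}(x)\asymp\log(1+x)$ on $[1,\infty)$ for \eqref{rhopropest}, and the same decomposition $\log(2\lambda)=\log\big(2\lambda/k(j)\big)+\log k(j)$ together with \eqref{asympH-1add}, condition \eqref{conditionlog} and dominated convergence for \eqref{additiveasymprho}. You merely spell out details the paper leaves implicit (the uniform majorant $\sup_{s\ge1}\vert(H')^{-1}(s)-\log(2s)\vert$, which in fact covers all $j$ once $\lambda\ge\sup_j k(j)$, and the small-$\lambda$ behaviour of $\bar{\rho}_1,\bar{\rho}_2$), so no substantive difference.
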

\begin{proof}
The property that $\rho$ is strictly increasing and continuous is inherited from $(H')^{-1}$. Condition \eqref{conditionlog} and the dominated convergence theorem together imply that $\rho(\lambda) \to 0$ as $\lambda \to 0^+$. Hence, that $\rho$ is a bijection will follow from the asymptotic behaviour \eqref{additiveasymprho}, which we will now derive. We observe that
\begin{align*}
\rho(\lambda)- &|k|_1 \log(2\lambda) = \sum\limits_{j \in S} k(j) \Big( (H')^{-1}\big( \frac{\lambda}{k(j)}\big) - \log(2\lambda)\Big)\\
&= \sum\limits_{j \in S} k(j) \Big( (H')^{-1}\big( \frac{\lambda}{k(j)}\big) - \log\big( \frac{2\lambda}{k(j)}\big)\Big) + \sum\limits_{j \in S}k(j)\Big( \log \big( \frac{2\lambda}{k(j)}\big)-\log(2\lambda)\Big)\\
&= \sum\limits_{j \in S} k(j) \Big( (H')^{-1}\big( \frac{\lambda}{k(j)}\big) - \log\big( \frac{2\lambda}{k(j)}\big)\Big) + \sum\limits_{j \in S} k(j) \log \big(\frac{1}{k(j)}\big).
\end{align*}
With this at hand and condition \eqref{conditionlog}, we can employ the dominated convergence theorem in order to deduce \eqref{additiveasymprho} from \eqref{asympH-1add}.

Now we turn to the estimates \eqref{rhopropest}. Note that by the assumptions on the kernel we have $\bar{\rho}_i(\lambda)<\infty$ for all $\lambda>0$ and $i=1,2$. We write
\begin{align*}
\rho(\lambda) & =\sum_{j\in S,\,k(j)<\lambda} k(j) (H')^{-1}\Big(\frac{\lambda}{k(j)}\Big)
+\sum_{j\in S,\,k(j)\ge \lambda} k(j) (H')^{-1}\Big(\frac{\lambda}{k(j)}\Big)=:\tilde{\rho}_1(\lambda)
+\tilde{\rho}_2(\lambda).
\end{align*}
The function $(H')^{-1}$ is continuous and maps $(0,\infty)$ onto itself. In view of \eqref{asympDF-} in Lemma \ref{Falphaprop}, it follows that there exist constants $C_1, C_2>0$  such that
\[
C_1\le \frac{(H')^{-1}(x)}{x}\le C_2, \;\; x\in (0,1], \quad 
C_1\le \frac{(H')^{-1}(x)}{\log (1+x)}\le C_2,\;\;x\in [1,\infty). 
\]
These bounds imply that
\[
C_1\le \frac{\tilde{\rho}_i(\lambda)}{\bar{\rho}_i(\lambda)}\le C_2,\quad \lambda>0,\,i=1,2.
\]
This shows the claimed estimates for $\rho(\lambda)$. Further, $\rho$ is a strictly increasing, continuous bijection from $[0,\infty)$ onto itself since $(H')^{-1}$ possesses this property.
\end{proof}
We now return to estimate \eqref{truncest2}. By sending $N\to \infty$ we would like to show that
\begin{equation} \label{aimest}
\sum_{j\in \iZ} k(j)^2 H\big(v_j\big)\ge 
G(A),
\end{equation}
where $G$ is defined by formula \eqref{intformulaG} as $G(a)= \int_0^a \rho^{-1}(s)\mathrm{d}s$ and where we assume that condition \eqref{conditionlog} holds.
From the definition of $\rho_N$ it is clear that $\rho_N\le \rho_{N+1}$ for all $N\geq N_0$. This implies
$\rho_N^{-1}\ge \rho^{-1}_{N+1}$ for all $N\geq N_0$. We also have that  $\lim_{N\to \infty} \rho_N^{-1}(a)=\rho^{-1}(a)$
for all $a  \geq 0$. Using these properties, it follows from the definition of $G_N$ that
\begin{equation*}
\sum_{j\in \iZ} k(j)^2 H\big({v}_j\big) \geq G_N(A_N) = \int_0^{A_N} \rho_N^{-1}(s)\mathrm{d}s \geq \int_0^{A_N}\rho^{-1}(s)\mathrm{d}s = G(A_N)
\end{equation*}
for any $N \geq N_0$. Sending $N\to \infty$ on the right-hand side of the latter estimate yields \eqref{aimest}. We summarize our results in the following theorem.
\begin{satz} \label{thm:Lagrangeestimate}
Suppose that the kernel $k$ satisfies condition \eqref{conditionlog}.
Then for any $v \in \ell_k^1(\mathbb{Z})$ with $\sum_{j\in \iZ} k(j) v_j>0$ there holds
\begin{equation} \label{GInequ}
\sum_{j\in \iZ} k(j)^2 H(v_j)\ge 
G\Big(\sum_{j\in \iZ} k(j) v_j\Big),
\end{equation}
where the function $G$ is given by \eqref{intformulaG}.
\end{satz}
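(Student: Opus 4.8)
The plan is to obtain the general-support inequality \eqref{GInequ} from the finite-support case via a two-stage limiting argument, using the explicit integral representation $G(a)=\int_0^a\rho^{-1}(s)\,\mathrm{d}s$ together with Lemma \ref{rhoproperties}, which guarantees (under hypothesis \eqref{conditionlog}) that $\rho:[0,\infty)\to[0,\infty)$ is a continuous, strictly increasing bijection; in particular $\rho^{-1}$, and hence $G$, are well-defined, and $G$ is continuous with $G(0)=0$.

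First I would fix $v\in\ell_k^1(\mathbb{Z})$ with $A:=\sum_{j\in\iZ}k(j)v_j>0$ and pick $N_0$ as in \eqref{truncpos}, so that $A_N:=\sum_{j\in\iZ_N}k(j)v_j>0$ for all $N\ge N_0$. For each such $N$ the set $S_N=\iZ_N\cap S$ is finite, so the Lagrange-multiplier analysis carried out above for finitely supported kernels applies: strict convexity of the functional $\Phi$ on the affine constraint set forces the unique critical point to be the global minimizer, which yields the optimal $CD$-function $G_N$ and the identity $G_N(a)=\int_0^a\rho_N^{-1}(s)\,\mathrm{d}s$. This gives
\begin{equation*}
\sum_{j\in\iZ_N}k(j)^2H(v_j)\ \ge\ G_N(A_N),\qquad N\ge N_0,
\end{equation*}
which is precisely \eqref{truncest1}. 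Since $H\ge 0$, passing from the partial sum over $\iZ_N$ to the full sum over $\iZ$ only increases the left-hand side (monotone convergence of partial sums of a nonnegative series), so $\sum_{j\in\iZ}k(j)^2H(v_j)\ge G_N(A_N)$ for every $N\ge N_0$.

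Next I would compare $G_N$ with $G$. Because $S_N\subseteq S$ and every summand in the defining series is positive, $\rho_N\le\rho$ pointwise on $(0,\infty)$; inverting the strictly increasing bijections gives $\rho_N^{-1}\ge\rho^{-1}$ on $[0,\infty)$, whence $G_N(a)=\int_0^a\rho_N^{-1}\ge\int_0^a\rho^{-1}=G(a)$ for all $a\ge 0$. (The refined chain $\rho_N\le\rho_{N+1}\le\rho$ also shows $\rho_N^{-1}$ decreases to $\rho^{-1}$, which can be recorded as a side remark but is not needed for the argument.) Combining this with the previous step,
\begin{equation*}
\sum_{j\in\iZ}k(j)^2H(v_j)\ \ge\ G_N(A_N)\ \ge\ G(A_N),\qquad N\ge N_0.
\end{equation*}
Finally, since $v\in\ell_k^1(\mathbb{Z})$ we have $A_N\to A$ as $N\to\infty$, and $G$ is continuous on $[0,\infty)$ by Lemma \ref{rhoproperties}, so letting $N\to\infty$ in the last display gives \eqref{GInequ}.

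The only genuinely delicate ingredient is the good behaviour of $\rho$: that it is finite on $(0,\infty)$, continuous, and a bijection of $[0,\infty)$ onto itself. This is exactly where condition \eqref{conditionlog} is used — it controls the logarithmic growth of the terms $(H')^{-1}(\lambda/k(j))$ through $\log(2+\tfrac1{k(j)})$ via the asymptotics \eqref{asympH-1add} — and it is the content of Lemma \ref{rhoproperties}. Granted that lemma, the rest of the proof consists of two clean monotone limit passages and requires no further estimates.
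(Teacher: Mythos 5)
Your proposal is correct and follows essentially the same route as the paper: truncate to $\iZ_N$, apply the finite-support Lagrange-multiplier result to get $G_N(A_N)$ below the (monotonically extended) left-hand sum, use $\rho_N^{-1}\ge\rho^{-1}$ to pass from $G_N$ to $G$, and let $N\to\infty$ using $A_N\to A$ and continuity of $G$. The only cosmetic difference is that you deduce $\rho_N^{-1}\ge\rho^{-1}$ directly from $\rho_N\le\rho$, while the paper argues via the monotone decrease of $\rho_N^{-1}$ to its limit $\rho^{-1}$; both are valid.
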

\begin{bemerkung}
The function $G$ given by \eqref{intformulaG} is the best possible function satisfying \eqref{GInequ} for all
$v \in \ell_k^1(\mathbb{Z})$ with $\sum_{j\in \iZ} k(j) v_j>0$. In fact, given $a>0$ define $(v_j)_{j\in \iZ}$
by \eqref{wjexplicit} for $j\in S$ and $v_j=0$ for $j\notin S$. Then $(v_j)_{j\in \iZ}\in \ell_k^1(\mathbb{Z})$ since
$(H')^{-1}(x) \sim \log x$ as $x\to \infty$ (see Lemma \ref{Falphaprop}) and by condition \eqref{conditionlog}.
Further,
\[
\sum_{j\in \iZ} k(j) v_j=\sum_{j\in S} k(j)(H')^{-1}\Big(\frac{\rho^{-1}(a)}{k(j)}\Big)=\rho\big( \rho^{-1}(a)\big)=a,
\]
by the definition of $\rho$ (cf.\ Lemma \ref{rhoproperties}), and 
\begin{align*}
\sum_{j\in \iZ} k(j)^2 & H(v_j) =\sum_{j\in S} k(j)^2 H\Big((H')^{-1}\Big(\frac{\rho^{-1}(a)}{k(j)}\Big) \Big)\\
&=\lim_{N\to \infty}\sum_{j\in S_N} k(j)^2 H\Big((H')^{-1}\Big(\frac{\rho_N^{-1}(a)}{k(j)}\Big) \Big)\\
&=\lim_{N\to \infty} G_N(a)=G(a)=G\big(\sum_{j\in \iZ} k(j) v_j \big), 
\end{align*}
in view of \eqref{GNDef} and the definition of $G$. Hence we have equality in \eqref{GInequ}. Since $a>0$ was arbitrary, the
assertion follows.
\end{bemerkung}
In order to make Theorem \ref{thm:Lagrangeestimate} applicable for our purposes, we need to ensure that $G$ is in fact a $CD$-function. 
\begin{satz}\label{CDresultLagrange}
If the associated kernel $k$ to the Markov generator $L$ satisfies condition \eqref{conditionlog},
then $L$ satisfies $CD_\Upsilon(0,G)$ with the strictly convex $CD$-function $G$ that is given by \eqref{intformulaG} and enjoys the asymptotic property
\begin{equation}\label{Gslargearguments}
G(a) \sim \frac{|k|_1 e^\frac{a-M(k)}{|k|_1}}{2} \text{ as } a \to \infty,
\end{equation}
where $M(k)=\sum\limits_{j \in S} k(j)\log \big( \frac{1}{k(j)}\big)$.
\end{satz}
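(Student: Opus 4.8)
The plan is to prove the statement in three steps: first, that the function $G$ of \eqref{intformulaG} is a strictly convex $CD$-function; second, that it obeys the asymptotics \eqref{Gslargearguments}; and third, that $CD_\Upsilon(0,G)$ then follows essentially for free from Theorem \ref{thm:Lagrangeestimate}. Throughout one works with the representation $G(a)=\int_0^a\rho^{-1}(s)\,\mathrm{d}s$, where $\rho\colon[0,\infty)\to[0,\infty)$ is the strictly increasing continuous bijection with $\rho(0)=0$ furnished by Lemma \ref{rhoproperties}; condition \eqref{conditionlog} is precisely what makes $\rho$, and hence $G$, well defined. Concerning the structural properties of $G$: since $\rho^{-1}$ is again a continuous strictly increasing bijection of $[0,\infty)$ fixing $0$, one has $G\in C^1([0,\infty))$ with $G(0)=0$ and $G'=\rho^{-1}$ strictly increasing, so $G$ is strictly convex and strictly positive on $(0,\infty)$. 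Strict convexity with $G(0)=0$ then gives that $a\mapsto G(a)/a$ is strictly increasing on $(0,\infty)$, because $aG'(a)-G(a)=\int_0^a\big(G'(a)-G'(s)\big)\,\mathrm{d}s>0$. The only remaining $CD$-property, integrability of $1/G$ at $\infty$, will drop out of the asymptotics, since $G$ then grows exponentially.

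For the asymptotics, put $a=\rho(\lambda)$, i.e.\ $\lambda=\rho^{-1}(a)$, so that $\lambda\to\infty$ as $a\to\infty$. The additive asymptotics \eqref{additiveasymprho} of Lemma \ref{rhoproperties} state $\rho(\lambda)-|k|_1\log(2\lambda)\to M(k)$, hence $a=|k|_1\log(2\lambda)+M(k)+o(1)$, so $\log(2\lambda)=\frac{a-M(k)}{|k|_1}+o(1)$ and therefore
\begin{equation*}
\rho^{-1}(a)=\lambda\sim\frac{1}{2}\,e^{(a-M(k))/|k|_1}\quad\text{as }a\to\infty,
\end{equation*}
that is, $G'(a)\sim\frac{1}{2}e^{(a-M(k))/|k|_1}$. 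Since the comparison function has divergent integral at $\infty$, the elementary principle that $\phi\sim\psi$ at $\infty$ together with $\int^\infty\psi=\infty$ forces $\int_0^a\phi\sim\int_0^a\psi$ yields
\begin{equation*}
G(a)=\int_0^a G'(s)\,\mathrm{d}s\sim\int_0^a\frac{1}{2}e^{(s-M(k))/|k|_1}\,\mathrm{d}s=\frac{|k|_1}{2}\Big(e^{(a-M(k))/|k|_1}-e^{-M(k)/|k|_1}\Big)\sim\frac{|k|_1}{2}\,e^{(a-M(k))/|k|_1},
\end{equation*}
which is \eqref{Gslargearguments}. Consequently $1/G$ is integrable at $\infty$, and $G$ is a genuine (strictly convex) $CD$-function.

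For the curvature-dimension inequality, by Lemma \ref{Lemma u(0)=0} one may assume $x=0$ and $u(0)=0$, and only the case $-Lu(0)\ge 0$ needs to be treated. Let $w$ be the symmetrization \eqref{w(j)}; then $w$ is bounded, so $v:=(-w(j))_{j\in\mathbb{Z}}\in\ell_k^1(\mathbb{Z})$, and by \eqref{Gleichheit Lw und Lu} one has $\sum_{j\in\mathbb{Z}}k(j)v_j=-Lw(0)=-Lu(0)\ge 0$. If this quantity is $0$, then $\Psi_{2,\Upsilon}(u)(0)\ge 0=G(0)$ is trivial; if it is positive, the basic estimate \eqref{Basic estimate Psi_2Palme} together with Theorem \ref{thm:Lagrangeestimate} gives
\begin{equation*}
\Psi_{2,\Upsilon}(u)(0)\ge\sum_{j\in\mathbb{Z}}k(j)^2H\big(-w(j)\big)\ge G\Big(\sum_{j\in\mathbb{Z}}k(j)\big(-w(j)\big)\Big)=G\big(-Lu(0)\big),
\end{equation*}
which is exactly $CD_\Upsilon(0,G)$.

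The step I expect to be the main obstacle is the asymptotic one: one must verify that the $o(1)$ error in \eqref{additiveasymprho} (which itself comes from \eqref{asympH-1add} via a dominated-convergence argument under condition \eqref{conditionlog}) is genuinely controlled, so that it survives both the inversion $a\mapsto\rho^{-1}(a)$ and the subsequent integration, and that the multiplicative constant $\frac{|k|_1}{2}$ and the shift $M(k)$ come out exactly as claimed rather than only up to an unknown factor. The structural properties of $G$ and the deduction of the $CD$-inequality are then routine given Lemma \ref{rhoproperties} and Theorem \ref{thm:Lagrangeestimate}.
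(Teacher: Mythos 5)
Your proposal is correct and follows essentially the same route as the paper: the structural $CD$-properties of $G$ from the monotonicity of $\rho^{-1}$, the asymptotics $\rho^{-1}(a)\sim\frac{1}{2}e^{(a-M(k))/|k|_1}$ obtained from \eqref{additiveasymprho} by the substitution $a=\rho(\lambda)$, the transfer of this equivalence to $G$ via integration (the paper writes out the $\varepsilon$-argument that your divergent-integral principle encapsulates), and the $CD_\Upsilon(0,G)$ inequality from the basic estimate \eqref{Basic estimate Psi_2Palme} combined with Theorem \ref{thm:Lagrangeestimate}. The only cosmetic differences are that you verify $a\mapsto G(a)/a$ strictly increasing directly rather than citing \cite[Remark 3.3]{DKZ} and that you treat the boundary case $-Lu(0)=0$ explicitly; neither affects the argument.
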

\begin{proof}
Combining the basic estimate \eqref{Basic estimate Psi_2Palme} with Theorem \ref{thm:Lagrangeestimate}, we only need to show that $G$ is a $CD$-function obeying the mentioned properties. Strict convexity of $G$ follows from the fact that $\rho^{-1}$ is strictly increasing. Consequently, we also have that the mapping $a \mapsto \frac{G(a)}{a}$ is strictly increasing on $(0,\infty)$ (cf. \cite[Remark 3.3]{DKZ}). $G(0)=0$ is clear by definition. It remains to show the asymptotic behaviour displayed in \eqref{Gslargearguments}. We substitute $y= \rho^{-1}(a)$, $a>0$, and observe that
\begin{align*}
\frac{2\rho^{-1}(a)}{\exp\big(\frac{a-M(k)}{|k|_1}\big)} = \frac{2y}{\exp\big(\frac{\rho(y)-M(k)}{|k|_1}\big)}= \exp\Big( \log(2y)-\frac{\rho(y)}{|k|_1}+\frac{M(k)}{|k|_1}\Big).
\end{align*}
Hence, \eqref{additiveasymprho} implies
\begin{equation*}
\rho^{-1}(a) \sim \frac{e^{\frac{a-M(k)}{|k|_1}}}{2} \text{ as } a \to \infty.
\end{equation*}
Let $\varepsilon\in(0,1)$ be fixed. Then there exists $t_0(\varepsilon)>0$ such that 
\begin{equation*}
1-\frac{\varepsilon}{2} \leq \frac{\rho^{-1}(t)}{\frac{e^{\frac{t-M(k)}{|k|_1}}}{2}}\leq 1+\frac{\varepsilon}{2}
\end{equation*}
holds for any $t \geq t_0(\varepsilon)$. We observe for $a \geq t_0(\varepsilon)$ that
\begin{align*}
G(a) = \int_0^a \rho^{-1}(s)\mathrm{d}s \geq \int_{t_0(\varepsilon)}^a \rho^{-1}(s)\mathrm{d}s \geq \frac{1-\frac{\varepsilon}{2}}{2}\int_{t_0(\varepsilon)}^a e^{\frac{s-M(k)}{|k|_1}}\mathrm{d}s = \frac{(1-\frac{\varepsilon}{2})|k|_1}{2e^\frac{M(k)}{|k|_1}}\Big( e^\frac{a}{|k|_1} - e^\frac{t_0(\varepsilon)}{|k|_1}\Big)
\end{align*}
and also
\begin{align*}
G(a) &\leq \int_0^{t_0(\varepsilon)}\rho^{-1}(s)\mathrm{d}s + \frac{1+\frac{\varepsilon}{2}}{2} \int_{t_0(\varepsilon)}^a e^{\frac{s-M(k)}{|k|_1}}\mathrm{d}s \\
& = \int_0^{t_0(\varepsilon)}\rho^{-1}(s)\mathrm{d}s + \frac{(1+\frac{\varepsilon}{2})|k|_1}{2e^\frac{M(k)}{|k|_1}}\Big( e^\frac{a}{|k|_1} - e^\frac{t_0(\varepsilon)}{|k|_1}\Big).
\end{align*}
Consequently, there exists some $a_0 \geq t_0(\varepsilon)$ such that 
\begin{equation*}
\frac{(1-\varepsilon)|k|_1}{2e^\frac{M(k)}{|k|_1}}\leq \frac{G(a)}{e^\frac{a}{|k|_1}}\leq \frac{(1+\varepsilon)|k|_1}{2e^\frac{M(k)}{|k|_1}}
\end{equation*}
holds for any $a\geq a_0$, which shows \eqref{Gslargearguments}.
\end{proof}

Let us illustrate the above findings with the operator $L_\beta$ induced by the power-type kernel. Condition \eqref{conditionlog} is met since the stronger condition of Theorem \ref{basicCDresult} is satisfied, see Remark \ref{rem2basicCDresult}(ii). We now focus on finding suitable estimates for $\rho$. It is straightforward to check that the mapping $y\mapsto \frac{\log(1+y)}{y}$ is decreasing on $(0,\infty)$. This yields, in particular, that the mapping $s \mapsto\frac{\log(1+\lambda s^{1+\beta})}{s^{1+\beta}}$ is also decreasing on $(0,\infty)$, where here and in the sequel $\lambda \le 1$ is assumed.  With this at hand we have
\begin{equation}\label{integralsumestimateforrho1}
\int_{r_+(\lambda)}^\infty \frac{\log\big(1+\lambda s^{1+\beta}\big)}{s^{1+\beta}}\mathrm{d}s \leq \sum\limits_{j=r_+(\lambda)}^\infty \frac{\log\big(1+\lambda j^{1+\beta}\big)}{j^{1+\beta}} \leq \int_{r_-(\lambda)}^\infty \frac{\log\big(1+\lambda s^{1+\beta}\big)}{s^{1+\beta}}\mathrm{d}s,
\end{equation}
where we have used the notation $r_-(\lambda):=\lfloor \lambda^{-\frac{1}{1+\beta}}\rfloor $ and $r_+(\lambda):=\lceil \lambda^{-\frac{1}{1+\beta}}\rceil$. Since the ratios $\frac{r}{\lfloor r \rfloor}$ and $\frac{\lceil r \rceil}{r}$ both converge to $1$ as $r \to \infty$, we can find for given $\delta \in (0,1)$ some $\lambda_0(\delta)>0$ small enough such that
$r_-(\lambda)\geq (1-\delta)\lambda^{-\frac{1}{1+\beta}}$ and $r_+(\lambda) \leq (1+\delta) \lambda^{-\frac{1}{1+\beta}}$ hold for every $\lambda \in (0,\lambda_0(\delta))$. We observe that
\begin{equation*}
\int_{(1\pm \delta) \lambda^{-\frac{1}{1+\beta}}}^\infty \frac{\log(1+\lambda s^{1+\beta)})}{s^{1+\beta}}\mathrm{d}s = \lambda^{\frac{\beta}{1+\beta}} \int_{1\pm \delta}^\infty \frac{\log(1+\sigma^{1+\beta})}{\sigma^{1+\beta}}\mathrm{d}\sigma.
\end{equation*}
Recalling the definition of $\bar{\rho}_1$ from Lemma \ref{rhoproperties}, we can now read from \eqref{integralsumestimateforrho1} that
\begin{equation*}
2\lambda^\frac{\beta}{1+\beta} \int_{1+\delta}^\infty \frac{\log(1+\sigma^{1+\beta})}{\sigma^{1+\beta}}\mathrm{d}\sigma \leq \bar{\rho}_1(\lambda) \leq 2\lambda^\frac{\beta}{1+\beta} \int_{1-\delta}^\infty \frac{\log(1+\sigma^{1+\beta})}{\sigma^{1+\beta}}\mathrm{d}\sigma,
\end{equation*}
for any $\lambda \in (0,\lambda_0(\delta))$.

Regarding the expression $\bar{\rho}_2(\lambda)$ from Lemma \ref{rhoproperties}, we have that
$
\bar{\rho}_2(\lambda)=2 \lambda r_-(\lambda)
$
and consequently 
\begin{equation*}
2(1-\delta)\lambda^\frac{\beta}{1+\beta} \leq \bar{\rho}_2(\lambda) \leq 2(1+\delta)\lambda^\frac{\beta}{1+\beta}
\end{equation*}
holds for any $\lambda \in (0,\lambda_0(\delta))$.

Combining these estimates with Lemma \ref{rhoproperties} yields
\begin{equation*}
2 C_1 \Big( \int_{1+ \delta}^\infty \frac{\log(1+\sigma^{1+\beta})}{\sigma^{1+\beta}}\mathrm{d}\sigma + 1-\delta\Big) \lambda^\frac{\beta}{1+\beta} \leq \rho(\lambda)\leq 2 C_2 \Big( \int_{1- \delta}^\infty \frac{\log(1+\sigma^{1+\beta})}{\sigma^{1+\beta}}\mathrm{d}\sigma + 1+\delta\Big) \lambda^\frac{\beta}{1+\beta}
\end{equation*}
for any $\lambda \in(0,\lambda_0(\delta))$, where the constants $C_1,C_2>0$ come from Lemma \ref{rhoproperties}. Consequently, there exist constants $c(\delta),C(\delta)>0$  such that
\begin{equation*}
c(\delta) a^\frac{1+\beta}{\beta}\leq \rho^{-1}(a)\leq C(\delta) a^\frac{1+\beta}{\beta}
\end{equation*}
holds for any $a \in (0,a_0(\delta))$, where $a_0(\delta):= \rho(\lambda_0(\delta))$.
Estimating $G$ from below by 
\begin{equation*}
G(a) \geq c(\delta) \int_0^a s^\frac{1+\beta}{\beta}\mathrm{d}s = \frac{c(\delta)\beta}{1+\beta} a^\frac{1+2\beta}{\beta}
\end{equation*}
for $a \in (0,a_0(\delta))$ we arrive at the following result. 
\begin{korollar}\label{powertypeCDfrombasicestimate}
Let $\beta \in(0,\infty)$. Then the operator $L_\beta$ satisfies $CD_\Upsilon(0,G_\beta)$ with a strictly convex $CD$-function $G_\beta$ that has the asymptotic behaviour $G_\beta(r)\sim \frac{|k_\beta|_1 e^\frac{r-M(k_\beta)}{|k_\beta|_1}}{2}$ as $r \to \infty$ and for which $G_\beta(r)\geq c r^\frac{1+2\beta}{\beta}$ holds for any $r\ge 0$, with some constant $c>0$. 
\end{korollar}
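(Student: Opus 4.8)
The plan is to assemble the corollary from Theorem~\ref{CDresultLagrange} together with the estimates on the function $\rho$ established in the preceding paragraphs for the power-type kernel. First I would record that $k_\beta$ satisfies the hypothesis \eqref{conditionlog} of Theorem~\ref{CDresultLagrange}: this follows from Remark~\ref{rem2basicCDresult}(ii), since for $\delta\in(0,\tfrac{\beta}{1+\beta})$ the condition $\hat C_\delta(k_\beta)<\infty$ holds and is strictly stronger than \eqref{conditionlog} for $k_\beta$. Theorem~\ref{CDresultLagrange} then gives immediately that $L_\beta$ satisfies $CD_\Upsilon(0,G_\beta)$ with $G_\beta$ the strictly convex $CD$-function defined by \eqref{intformulaG} for the kernel $k_\beta$, and with the exponential asymptotics \eqref{Gslargearguments}, i.e.\ $G_\beta(r)\sim \tfrac{|k_\beta|_1 e^{(r-M(k_\beta))/|k_\beta|_1}}{2}$ as $r\to\infty$. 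Hence all asserted properties are in hand except the global polynomial lower bound $G_\beta(r)\ge c\,r^{(1+2\beta)/\beta}$ on $[0,\infty)$.

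For the lower bound I would first pin down the behaviour of $\rho$ near the origin. Exploiting that $y\mapsto \log(1+y)/y$ is decreasing, one obtains the integral--sum comparison \eqref{integralsumestimateforrho1} and, controlling the floor/ceiling quantities $r_-(\lambda),r_+(\lambda)$, bounds $\bar\rho_1(\lambda)$ and $\bar\rho_2(\lambda)$ from Lemma~\ref{rhoproperties} above and below by constant multiples of $\lambda^{\beta/(1+\beta)}$ for all sufficiently small $\lambda>0$. The two-sided estimate \eqref{rhopropest} then yields $c(\delta)\lambda^{\beta/(1+\beta)}\le \rho(\lambda)\le C(\delta)\lambda^{\beta/(1+\beta)}$ for $\lambda\in(0,\lambda_0(\delta))$, hence $\rho^{-1}(a)\ge c(\delta)\,a^{(1+\beta)/\beta}$ on a neighbourhood of $0$. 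Integrating $G_\beta(a)=\int_0^a\rho^{-1}(s)\,\mathrm{d}s$ (cf.\ \eqref{intformulaG}) then gives $G_\beta(a)\ge c_1\,a^{(1+2\beta)/\beta}$ on some interval $(0,a_0)$, where $\delta$ (and thus $a_0$) is fixed once and for all.

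It remains to promote this local bound to all $r\ge 0$. On $[a_0,\infty)$ the map $r\mapsto G_\beta(r)/r^{(1+2\beta)/\beta}$ is continuous and strictly positive (since $G_\beta>0$ on $(0,\infty)$) and, by \eqref{Gslargearguments}, tends to $+\infty$ as $r\to\infty$; it is therefore bounded below by a positive constant on $[a_0,\infty)$. Combining this with the bound on $(0,a_0)$ and taking $c$ to be the minimum of the two constants yields $G_\beta(r)\ge c\,r^{(1+2\beta)/\beta}$ for every $r\ge 0$, both sides vanishing at $r=0$. The main technical obstacle is the sharp identification of the exponent $\beta/(1+\beta)$ in the small-$\lambda$ asymptotics of $\rho$ — that is, the integral--sum comparison and the bookkeeping with $r_-(\lambda),r_+(\lambda)$; once this is secured, the rest of the argument is routine.
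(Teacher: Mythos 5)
Your proposal is correct and follows essentially the same route as the paper: verify \eqref{conditionlog} via Remark \ref{rem2basicCDresult}(ii), invoke Theorem \ref{CDresultLagrange} for strict convexity and the exponential asymptotics, and derive the small-argument bound from the two-sided estimate $\rho(\lambda)\asymp \lambda^{\beta/(1+\beta)}$ obtained through the integral--sum comparison \eqref{integralsumestimateforrho1} and Lemma \ref{rhoproperties}, then integrate $\rho^{-1}$ via \eqref{intformulaG}. Your final step extending the bound from $(0,a_0)$ to all $r\ge 0$ (continuity, positivity and the exponential growth of $G_\beta$) is exactly the routine argument the paper leaves implicit.
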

\begin{bemerkung}
Note that Corollary \ref{powertypeCDfrombasicestimate} improves Theorem \ref{basicCDresult} in case of the power-type kernel not only for large arguments but also for small ones. In fact, due to Remark  \ref{rem2basicCDresult}(ii) the $CD$-function from Theorem \ref{basicCDresult} can be chosen such that it behaves like $r^\frac{1+2\beta-\varepsilon}{\beta-\varepsilon}$ for $\varepsilon>0$ arbitrary small. However, setting $\varepsilon=0$ is not allowed in Theorem \ref{basicCDresult}. Hence Corollary \ref{powertypeCDfrombasicestimate} yields the limiting case of Theorem \ref{basicCDresult} for small arguments in case of the power-type kernel.
\end{bemerkung}
Clearly, the above calculations rely on the specific example of the power-type kernel. But together with Theorem \ref{basicCDresult}, Theorem \ref{CDresultLagrange} yields under quite mild assumptions already a $CD$-function that is strong enough  to imply the applications which will be described in Section \ref{sec:LiYau} and Section \ref{Chapter6}.
\begin{korollar}\label{cor:asymptoticscombined}
If there exists some $\tau \in (0,1)$ such that 
\begin{equation}\label{conditiondelta}
\sum\limits_{j \in \mathbb{Z}} k(j)^{1-\tau} <\infty,
\end{equation}
then the Markov generator associated with $k$ satisfies $CD_\Upsilon(0,F)$ with a strictly convex $CD$-function $F$ enjoying the asymptotic properties
\begin{equation*}
\begin{split}
F(r) &\sim c_1e^{\frac{r}{|k|_1}} \text{ as } r \to \infty,\\
F(r) &\sim c_2r^\frac{1+\tau}{\tau} \text{ as } r \to 0,
\end{split}
\end{equation*}
for some $c_1,c_2>0$. 
\end{korollar}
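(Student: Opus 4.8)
The plan is to combine the two $CD$-functions produced by Theorem \ref{basicCDresult} and Theorem \ref{CDresultLagrange} into a single $CD$-function that inherits the good large-argument behaviour from one and the good small-argument behaviour from the other. First I would note that condition \eqref{conditiondelta} with exponent $1-\tau$ immediately gives both $\hat C_\tau(k)<\infty$ (so Theorem \ref{basicCDresult} applies with $\gamma=\frac{1+\tau}{\tau}$, yielding $CD_\Upsilon(0,cF_\gamma)$ for some $c>0$) and, since $1-\tau<1$ and $(2+\tfrac1{k(j)})^{\tau}\ge \log(2+\tfrac1{k(j)})$ up to a constant for the relevant range, condition \eqref{conditionlog} (so Theorem \ref{CDresultLagrange} applies, giving $CD_\Upsilon(0,G)$ with $G$ strictly convex and $G(a)\sim \tfrac{|k|_1}{2}e^{(a-M(k))/|k|_1}$ as $a\to\infty$). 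So $L$ satisfies $CD_\Upsilon(0,\widetilde F)$ with $\widetilde F:=\max\{cF_\gamma, G\}$ on $[0,\infty)$, because $\Psi_{2,\Upsilon}(u)(x)$ is bounded below by each of the two right-hand sides.

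The pointwise maximum of two $CD$-functions need not be a $CD$-function (the map $r\mapsto \widetilde F(r)/r$ need not be monotone across the switching point), so the second step is to replace $\widetilde F$ by a genuine $CD$-function $F\le \widetilde F$ with the same asymptotics. A clean way: observe that near $0$ we have $cF_\gamma(r)=cr^{(1+\tau)/\tau}$, which is a $CD$-function with $r\mapsto cr^{1/\tau}$ strictly increasing, and near $\infty$ we have $G(r)\sim c_1 e^{r/|k|_1}$ with $c_1=\tfrac{|k|_1}{2}e^{-M(k)/|k|_1}$; I would then exhibit $F$ as, say, the largest convex minorant of $\widetilde F$, or more concretely set $F(r)=\int_0^r g(s)\,ds$ where $g$ is a continuous, strictly increasing function chosen to agree asymptotically with $(cF_\gamma)'$ near $0$ and with $G'=\rho^{-1}$ near $\infty$ and to satisfy $g\le \min\{(cF_\gamma)',G'\}$ wherever needed so that $F\le cF_\gamma$ and $F\le G$. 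Since $g$ is strictly increasing with $g(0)=0$, $F$ is a strictly convex $CD$-function (strict convexity forces $r\mapsto F(r)/r$ strictly increasing, cf.\ \cite[Remark 3.3]{DKZ}), $1/F$ is integrable at $\infty$ because $F$ grows exponentially there, and $F\le \widetilde F$ guarantees that $CD_\Upsilon(0,F)$ still holds. Choosing $g$ so that $g(s)\sim (cF_\gamma)'(s)= c\frac{1+\tau}{\tau}s^{1/\tau}$ as $s\to 0$ and $g(s)\sim G'(s)=\rho^{-1}(s)\sim \tfrac12 e^{(s-M(k))/|k|_1}$ as $s\to\infty$ yields, after integrating, $F(r)\sim \frac{c\tau}{1+2\tau}\cdot\frac{1+\tau}{\tau}\,r^{(1+\tau)/\tau}$ (up to relabelling constants, $F(r)\sim c_2 r^{(1+\tau)/\tau}$) as $r\to 0$ and $F(r)\sim |k|_1 g(r)\sim c_1 e^{r/|k|_1}$ as $r\to\infty$, which is exactly the claim with suitable $c_1,c_2>0$.

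I expect the main obstacle to be the bookkeeping in the gluing step: one has to verify simultaneously that the interpolating function $g$ can be taken globally strictly increasing, continuous, and bounded above by both $(cF_\gamma)'$ and $\rho^{-1}$ on the whole ray (not just asymptotically), so that both $F\le cF_\gamma$ and $F\le G$ hold and hence $CD_\Upsilon(0,F)$ is not lost. This is really a soft exercise — both $(cF_\gamma)'$ and $\rho^{-1}$ are continuous strictly increasing bijections of $[0,\infty)$ vanishing at $0$, so their pointwise minimum is continuous and nondecreasing, and one obtains a strictly increasing $g$ dominated by it (e.g.\ by a small perturbation, or by first passing to $\min\{(cF_\gamma)', G'\}$ and checking it is already strictly increasing in the cases that matter) — but it is the one place where a careless argument could fail, so I would write it out carefully. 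Everything else (the two applicability checks for \eqref{conditiondelta}$\Rightarrow$\eqref{conditionlog} and $\hat C_\tau(k)<\infty$, and the asymptotic evaluation of the integral defining $F$) is routine given the results already established.
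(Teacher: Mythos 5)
Your route coincides with the paper's: apply Theorem \ref{basicCDresult} with $\delta=\tau$ (giving $CD_\Upsilon(0,cF_\gamma)$ with $\gamma=\frac{1+\tau}{\tau}$) and Theorem \ref{CDresultLagrange} (after checking that \eqref{conditiondelta} implies \eqref{conditionlog}, which your estimate $\log\big(2+\tfrac{1}{k(j)}\big)\lesssim 1+k(j)^{-\tau}$ handles correctly), and then merge the behaviour of $cF_\gamma$ at $0$ with that of $G$ at $\infty$ into a single $CD$-function; this is exactly the paper's argument, with the merging regarded as straightforward. The genuine problem is in how you carry out the merging: you require the interpolant $g$ to satisfy $g\le\min\{(cF_\gamma)',G'\}$ and, in your final paragraph, to be bounded above by both $(cF_\gamma)'$ and $\rho^{-1}$ \emph{on the whole ray}, so that $F\le cF_\gamma$ and $F\le G$ globally. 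These constraints are unsatisfiable together with your own target asymptotics: $F\le cF_\gamma$ everywhere forces power growth, while you want $F(r)\sim c_1e^{r/|k|_1}$ as $r\to\infty$; equivalently, $g\le (cF_\gamma)'$ on all of $[0,\infty)$ contradicts $g\sim\rho^{-1}$ at infinity. What is actually needed to keep $CD_\Upsilon(0,F)$ is only the pointwise bound $F(r)\le\max\{cF_\gamma(r),G(r)\}$, since $\Psi_{2,\Upsilon}(u)(x)\ge\max\{cF_\gamma(-Lu(x)),\,G(-Lu(x))\}$ whenever $-Lu(x)\ge 0$; so impose $F\le cF_\gamma$ only near $0$, $F\le G$ only for large arguments, and $F\le\max\{cF_\gamma,G\}$ in the transition region. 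With that correction your construction (strictly increasing $g$, $F(r)=\int_0^r g(s)\,\mathrm{d}s$, strict convexity, integrability of $1/F$ at $\infty$, and the asymptotic evaluation) goes through; the constant you compute at $0$ should be $c$ rather than $\frac{c(1+\tau)}{1+2\tau}$, but this is immaterial since $c_2$ is unspecified.

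A smaller point: your reason for not simply taking $\widetilde F=\max\{cF_\gamma,G\}$ is off. The maximum of two $CD$-functions is again a $CD$-function, since $\widetilde F(r)/r=\max\{cF_\gamma(r)/r,\,G(r)/r\}$ is a maximum of strictly increasing functions and hence strictly increasing, and $1/\widetilde F\le 1/G$ is integrable at $\infty$; it is even strictly convex. The real obstruction is that near $0$ the maximum may exceed every multiple of $r^{\frac{1+\tau}{\tau}}$ (for a finitely supported kernel, for example, $G$ behaves quadratically near $0$), so $\widetilde F$ need not satisfy the claimed asymptotics $F(r)\sim c_2r^{\frac{1+\tau}{\tau}}$ as $r\to 0$ — which is precisely why one glues a smaller $CD$-function underneath it, as described above.
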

\begin{proof}
This follows from Theorem \ref{basicCDresult}, Theorem \ref{CDresultLagrange} and the fact that condition \eqref{conditiondelta} implies \eqref{conditionlog}.
\end{proof}
\begin{bemerkung}\label{rem:kernelforcond2compare}
The quite interesting kernel
\begin{equation*}
k(j)= \frac{1}{|j| \big(\log(1+|j|)\big)^{1+\alpha}}, \quad j \in \mathbb{Z}\setminus\{0\},
\end{equation*}
where $\alpha>0$, is integrable but condition \eqref{conditiondelta} is not satisfied. Moreover, we note that for $|j|$ large enough
\[
\log(|j|+1) \le \log\big(2+\frac{1}{k(j)}\big) \leq 3 \log(|j|+1)
\]
%
and hence condition \eqref{conditionlog} is satisfied if and only if $\alpha >1$. In particular, the range of $\alpha \in (0,1]$ yields kernels to which neither Theorem \ref{basicCDresult} nor Theorem \ref{CDresultLagrange} applies.
\end{bemerkung}
What is implicitly used in the proof of Corollary \ref{cor:asymptoticscombined} is that the asymptotic behaviour at $0$ of one $CD$-function and the asymptotic behaviour at $\infty$ from another can be combined to one single $CD$-function that enjoys both asymptotic properties. This is straightforward to see and it will be also used in Section 4, where we will
substantially improve the asymptotic behaviour for small arguments of $CD$-functions for a large class of kernels. We can further bound the respective $CD$-function from below by an explicitly given $CD$-function with the desired asymptotic behaviour. This will be important in Section \ref{sec:LiYau}. More precisely, suppose that the operator $L$ satisfies $CD_\Upsilon (0,F)$ with some $CD$-function $F$ for which there exist constants $c_1,c_2,\delta >0$ and $\gamma\geq 2$ such that
$F(x) \sim c_1 x^\gamma$ as $x \to 0$
and
$
F(x) \sim c_2 e^{\delta x}$ as $x \to \infty$.
Define the function
\begin{equation}\label{eq:Fhat}
\hat{F} (x) = \begin{cases}
e^{\delta M} x^\gamma, x \in [0,M],\\
M^\gamma e^{\delta x}, x > M,
\end{cases}
\end{equation}
where $M > 0$. Then there exists some $c>0$ such that $L$ satisfies $CD_\Upsilon(0,\tilde{F})$ with the $CD$-function $\tilde{F} = c \hat{F}$. Moreover, the constant $M>0$ can be chosen so large that $\tilde{F}$ is convex. We will choose $M\geq \frac{2}{\delta}$ since this guarantees that the mapping $x\mapsto x^{-2}e^{\delta x}$ is increasing for $x>M$. This will be important in Section \ref{sec:LiYau}. The main advantage of this explicit formula for the $CD$-function is that we can derive an explicit representation of the corresponding relaxation function, too.
\begin{lemma}
\label{LemmaRelaxFunc}
Let $\hat{F}$ be given by \eqref{eq:Fhat}, where $\gamma \geq 2, \delta>0$ and $M>0$, and let $\tilde{F}= c\hat{F}$ for some $c>0$. Then the relaxation function corresponding to $\tilde{F}$ is given by 
\begin{align*}
\varphi (t) = \begin{cases}
-\frac{1}{\delta} \log (c \delta M^{\gamma} t), \ t \in (0,t_*],\\
\big(c (\gamma-1)e^{\delta M}t + C \big)^{-\frac{1}{\gamma-1}}, \ t > t_*,
\end{cases}
\end{align*}
where $t_* = \frac{M^{-\gamma}}{c \delta} e^{-\delta M}$ and $C = M^{-\gamma} \big(M-\frac{\gamma-1}{\delta}\big) >0$.
\end{lemma}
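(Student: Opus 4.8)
The relaxation function $\varphi$ is defined as the unique strictly positive solution of $\dot\varphi(t) = -\tilde F(\varphi(t))$ with $\varphi(0+)=\infty$. Since $\tilde F = c\hat F$ is a $CD$-function, Lemma 2.1 (the definition) and \cite[Lemma 3.5]{DKZ} guarantee existence and uniqueness; so the plan is simply to verify that the piecewise-defined candidate $\varphi$ in the statement solves the ODE, is $C^1$ and strictly decreasing, matches the claimed value at the junction $t_*$, and blows up as $t\to 0^+$. First I would treat the two regimes separately. For $t>t_*$ the function $\varphi$ should take values in $(0,M]$, where $\tilde F(x) = c e^{\delta M} x^\gamma$; solving $\dot\varphi = -c e^{\delta M}\varphi^\gamma$ by separation of variables gives $\frac{d}{dt}\big(\varphi^{1-\gamma}\big) = (\gamma-1) c e^{\delta M}$, hence $\varphi(t)^{1-\gamma} = c(\gamma-1)e^{\delta M} t + C$ for a constant $C$, which is exactly the stated formula; one then differentiates the candidate to confirm it solves the ODE on $(t_*,\infty)$. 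For $t\in(0,t_*]$ one expects $\varphi(t)\ge M$, where $\tilde F(x) = cM^\gamma e^{\delta x}$; solving $\dot\varphi = -cM^\gamma e^{\delta\varphi}$ gives $\frac{d}{dt} e^{-\delta\varphi} = \delta c M^\gamma$, so $e^{-\delta\varphi(t)} = \delta c M^\gamma t + \tilde C$, i.e. $\varphi(t) = -\frac1\delta\log(\delta c M^\gamma t + \tilde C)$; choosing $\tilde C = 0$ yields $\varphi(0+)=\infty$ as required, giving the first branch.

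Next I would pin down the matching at $t=t_*$. Requiring the logarithmic branch to equal $M$ at its right endpoint forces $-\frac1\delta\log(\delta c M^\gamma t_*) = M$, i.e. $t_* = \frac{M^{-\gamma}}{c\delta}e^{-\delta M}$, which is the stated value of $t_*$. Then continuity of $\varphi$ at $t_*$ on the other branch forces $\big(c(\gamma-1)e^{\delta M}t_* + C\big)^{-1/(\gamma-1)} = M$, i.e. $c(\gamma-1)e^{\delta M}t_* + C = M^{1-\gamma}$; substituting the value of $t_*$ gives $c(\gamma-1)e^{\delta M}\cdot\frac{M^{-\gamma}}{c\delta}e^{-\delta M} = \frac{\gamma-1}{\delta}M^{-\gamma}$, so $C = M^{1-\gamma} - \frac{\gamma-1}{\delta}M^{-\gamma} = M^{-\gamma}\big(M - \frac{\gamma-1}{\delta}\big)$, as claimed. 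Positivity of $C$ follows from the standing choice $M\ge 2/\delta > (\gamma-1)/\delta$... wait, that needs $\gamma \le 3$; more carefully, one simply observes that $C>0$ is equivalent to $M > (\gamma-1)/\delta$, which holds provided $M$ is taken large enough (and the statement restricts to such $M$ via the inequality $C>0$ being asserted).

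Finally I would check the regularity and monotonicity needed to invoke uniqueness. On each open interval $\varphi$ is smooth and, being a solution of $\dot\varphi = -\tilde F(\varphi)$ with $\tilde F>0$ on $(0,\infty)$, it is strictly decreasing there; at $t=t_*$ both branches give $\varphi(t_*)=M$, and the one-sided derivatives both equal $-\tilde F(M) = -cM^\gamma e^{\delta M}$ (left branch: derivative of $-\frac1\delta\log(\delta c M^\gamma t)$ at $t_*$ equals $-\frac{1}{\delta t_*} = -cM^\gamma e^{\delta M}$; right branch: $\dot\varphi(t_*) = -\tilde F(M)$ directly), so $\varphi\in C^1(0,\infty)$ and $\dot\varphi + \tilde F(\varphi) = 0$ on all of $(0,\infty)$. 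Together with $\varphi(0+)=\infty$ and strict positivity (the right branch stays positive since $c(\gamma-1)e^{\delta M}t + C > 0$ for all $t>0$ because $C>0$), this identifies $\varphi$ with the relaxation function by the uniqueness in \cite[Lemma 3.5]{DKZ}. The only mildly delicate point is confirming that the logarithmic branch really does take values in $[M,\infty)$ on $(0,t_*]$ and the power branch in $(0,M]$ on $(t_*,\infty)$, so that on each piece the correct formula for $\tilde F$ is being used; this is immediate from monotonicity and the boundary value $\varphi(t_*)=M$. I do not expect any serious obstacle here — the argument is a direct verification, the content being entirely in choosing the constants $t_*$ and $C$ to force $C^1$-matching at the junction.
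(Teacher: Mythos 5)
Your proposal is correct; it differs from the paper's proof mainly in form. The paper derives the formula by writing $\varphi(t)=G^{-1}(t)$ with $G(x)=\int_x^\infty \tilde{F}(r)^{-1}\,\mathrm{d}r$ (citing the proof of Lemma 3.7 in \cite{DKZ}), computes $G$ explicitly on $(0,M]$ and on $(M,\infty)$, and inverts; the constants $t_*=G(M)$ and $C$ then come directly out of the integrals rather than from $C^1$-matching. Your route --- separation of variables on each regime, fixing the constants by the blow-up condition at $0$ and continuity at $t_*$, checking the one-sided derivatives, and invoking uniqueness of the relaxation function (Lemma 3.5 in \cite{DKZ}, as recalled in Definition 2.1) --- is an equivalent verification argument built on the same two elementary integrals; what the paper's construction buys is that $\varphi(0+)=\infty$ and strict monotonicity are automatic from the representation $G^{-1}$, while yours only needs the uniqueness statement and not the identification of the relaxation function with $G^{-1}$. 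The one point you rightly hesitated over is harmless: positivity of $C$ is equivalent to $M>(\gamma-1)/\delta$, which the paper's standing choice $M\ge 2/\delta$ guarantees only for $\gamma\le 3$, so the assertion $C>0$ in the statement implicitly restricts $M$; but this does not affect the formula itself, since on $[t_*,\infty)$ one has $c(\gamma-1)e^{\delta M}t+C\ge M^{1-\gamma}>0$ (its value at $t_*$), so the power branch is well defined, positive and decreasing regardless of the sign of $C$.
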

\begin{proof}
The relaxation function $\varphi$ corresponding to $\tilde{F}$ is given by $\varphi (t) = G^{-1} (t)$, where  $G(x) = \int_x^\infty \frac{1}{\tilde{F}(r)} \mathrm{d}r$ (see the proof of \cite[Lemma 3.7]{DKZ}). To compute $G$, letting $x \in (0,M]$ we obtain 
\begin{align*}
G(x) &= c^{-1} \Big(e^{-\delta M} \int_x^M r^{-\gamma} \mathrm{d}r +  M^{-\gamma} \int_M^\infty e^{-\delta r} \mathrm{d}r\Big) \\&= c^{-1} \Big(\frac{e^{-\delta M}}{\gamma-1} \big(x^{-\gamma+1}-M^{-\gamma+1}\big) + \frac{M^{-\gamma}}{\delta} e^{-\delta M}\Big).
\end{align*}
For $x>M$ we get
\begin{align*}
G(x) = \frac{M^{-\gamma}}{\delta c} e^{-\delta x}.
\end{align*} 
By positivity of $\tilde{F}$, $G$ is a decreasing function. Thus, we find for any $t \geq G(M) = t_*$
\begin{align*}
\varphi (t) = G^{-1} (t) = \big(c (\gamma-1)e^{\delta M}t + C \big)^{-\frac{1}{\gamma-1}},
\end{align*}
where $C = M^{-\gamma} \Big(M-\frac{\gamma-1}{\delta}\Big) >0$. For $t \in (0,t_*]$ we get
\begin{align*}
\varphi(t) = -\frac{1}{\delta} \log \big(c \delta M^\gamma t\big).
\end{align*}
\end{proof}
\begin{bemerkung}
The findings in \cite[Lemma 3.7]{DKZ} are quite similar to Lemma \ref{LemmaRelaxFunc}. There, instead of a specific $CD$-function, the authors consider $CD$-functions $F$ with $F(x) \sim c e^{\delta x}$ as $x \to \infty$ respectively $F(x) \sim \tilde{c}x^2$ as $x \to 0$ and are only interested in the asymptotic behaviour of the relaxation function. Moreover, they only treat the case $\gamma =2$.
\end{bemerkung}

\begin{bemerkung}
The content of this section can be extended without much effort to the multidimensional setting of $\Z^d$. In this case the lower bound for $G_\beta$ from Corollary \ref{powertypeCDfrombasicestimate} reads as 
$G_\beta(r)\ge c r^\frac{d+2\beta}{\beta}$, $r\ge  0$. However, for the sake of consistency with regard to the following sections we decided to present also this section in the one-dimensional framework.
\end{bemerkung}

\section{Sharper estimate for small arguments}

In the following, we will further improve our already established results. Note that for the fractional discrete Laplacian $-(-\Delta)^\frac{\beta}{2}, \ \beta \in (0,2),$ it would be desirable to prove a $CD_\Upsilon (0,F)$ condition with a $CD$-function $F$ that possesses quadratic behaviour for small arguments if we send $\beta \to 2$. This seems natural as we already know from the continuous setting that the Laplacian satisfies a certain $CD$-inequality with a quadratic $CD$-function, see \cite[Chapter 1.16.2]{BGL}. We also remind the reader of Corollary \ref{Finitesupportresult}, which applies to 
the discrete Laplacian, showing that $CD_\Upsilon (0,F)$ holds with some $CD$-function $F$ that behaves quadratically
near zero. With the findings from the previous sections we have not yet established this desirable limiting behaviour
for the power-type kernel and the fractional discrete Laplacian, respectively, as $\beta\to 2$, but we will succeed in proving it in this section.

In fact, we will obtain this result as a special case of the main theorem of this section, Theorem \ref{Satz alphates Moment},
on non-increasing (on $\iN$) kernels that have finite $\alpha$-th moment for some $\alpha \in (1,2]$ and satisfy a certain
decay condition. Theorem \ref{Satz alphates Moment} includes the important statement that in case $\alpha=2$ (i.e.\
the kernel has finite second moment), the operator $L$ generated by the kernel satisfies $CD_\Upsilon (0,F)$ with a
quadratic $CD$-function. This statement can be seen as an analogue to \cite[Theorem 4.1]{SWZ1}, where the classical
Bakry-\'Emery condition $CD(0,d)$ has been established for such kernels.

For the proof of Theorem \ref{Satz alphates Moment} we will need the following auxiliary result on the function $F_\gamma$ defined in (\ref{Definition F}). 

\begin{lemma}
\label{Lemma 1}
(i) Let $\gamma \geq 2$ and $p,q > 1$ such that $\frac{1}{p}+\frac{1}{q} = 1$. Then for any $x,y \in \R$ we have 
\begin{equation*}
F_\gamma (x+y) \le p^{\gamma-1} F_\gamma (x) + q^{\gamma-1} F_\gamma (y).
\end{equation*}
(ii) Let $\gamma \geq 2$ and $\delta > 0$. Then for any $\lambda \geq 1$ and $x,y \in \mathbb{R}$ we have 
\begin{equation*}
F_\gamma (x+y) \leq \tilde{C} \lambda^{\gamma-1} F_\gamma (x) + \Big(1+\frac{\delta}{\lambda}\Big) F_\gamma (y),
\end{equation*}
where $\tilde{C} = \Big(\frac{(1+\delta)(\gamma-1)}{\delta}\Big)^{\gamma-1}$.
\end{lemma}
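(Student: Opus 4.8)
\textbf{Proof plan for Lemma \ref{Lemma 1}.}

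The plan is to prove part (i) first by a direct convexity argument and then derive part (ii) from part (i) with a suitable choice of the conjugate exponents $p,q$.

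For part (i): since $F_\gamma(r)=|r|^\gamma$ with $\gamma\ge 2$, the function $F_\gamma$ is convex on $\mathbb{R}$, and moreover $F_\gamma(\mu r)=\mu^\gamma F_\gamma(r)$ for $\mu>0$. The idea is to write $x+y=\tfrac1p\cdot(px)+\tfrac1q\cdot(qy)$ as a convex combination (using $\tfrac1p+\tfrac1q=1$), apply convexity of $F_\gamma$ to get $F_\gamma(x+y)\le \tfrac1p F_\gamma(px)+\tfrac1q F_\gamma(qy)$, and then use homogeneity to obtain $F_\gamma(x+y)\le \tfrac1p p^\gamma F_\gamma(x)+\tfrac1q q^\gamma F_\gamma(y)=p^{\gamma-1}F_\gamma(x)+q^{\gamma-1}F_\gamma(y)$. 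This handles all $x,y\in\mathbb{R}$ at once since convexity and homogeneity hold on the whole line.

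For part (ii): given $\lambda\ge 1$ and $\delta>0$, I want to choose $q>1$ so that $q^{\gamma-1}=1+\tfrac{\delta}{\lambda}$, i.e.\ $q=\bigl(1+\tfrac{\delta}{\lambda}\bigr)^{1/(\gamma-1)}$, which is legitimate since $1+\tfrac{\delta}{\lambda}>1$. Then $p$ is determined by $\tfrac1p=1-\tfrac1q$, and part (i) gives $F_\gamma(x+y)\le p^{\gamma-1}F_\gamma(x)+\bigl(1+\tfrac{\delta}{\lambda}\bigr)F_\gamma(y)$. It remains to bound $p^{\gamma-1}$ by $\tilde C\lambda^{\gamma-1}$ with $\tilde C=\bigl(\tfrac{(1+\delta)(\gamma-1)}{\delta}\bigr)^{\gamma-1}$. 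Writing $q=1+\varepsilon$ with $\varepsilon=\bigl(1+\tfrac{\delta}{\lambda}\bigr)^{1/(\gamma-1)}-1$, one has $p=\tfrac{q}{q-1}=\tfrac{1+\varepsilon}{\varepsilon}\le \tfrac{1+\delta/\lambda}{\varepsilon}$ (using $\lambda\ge 1$ so $q\le 1+\delta$), hence $p^{\gamma-1}\le \tfrac{(1+\delta/\lambda)}{\varepsilon^{\gamma-1}}\,(1+\delta/\lambda)^{\gamma-2}$; more cleanly, $p^{\gamma-1}=\tfrac{q^{\gamma-1}}{(q-1)^{\gamma-1}}=\tfrac{1+\delta/\lambda}{\varepsilon^{\gamma-1}}$. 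So the whole task reduces to a lower bound $\varepsilon\ge c/\lambda$ for an appropriate $c$: the elementary inequality $(1+s)^{1/(\gamma-1)}-1\ge \tfrac{s}{(\gamma-1)(1+s)^{(\gamma-2)/(\gamma-1)}}\ge \tfrac{s}{(\gamma-1)(1+s)}$ applied with $s=\delta/\lambda\le\delta$ yields $\varepsilon\ge \tfrac{\delta/\lambda}{(\gamma-1)(1+\delta)}=\tfrac{\delta}{(\gamma-1)(1+\delta)\lambda}$. Plugging this in gives $\varepsilon^{-(\gamma-1)}\le\bigl(\tfrac{(\gamma-1)(1+\delta)}{\delta}\bigr)^{\gamma-1}\lambda^{\gamma-1}$, and since $1+\delta/\lambda\le 1+\delta$ but we want exactly $\tilde C$, one checks $1+\delta/\lambda\le$ a bounded factor can be absorbed — in fact a slightly more careful bookkeeping (keeping $q^{\gamma-1}=1+\delta/\lambda\le 1+\delta$ and noting $(1+\delta)$ already appears to the power $\gamma-1$ in $\tilde C$) closes it, or one simply uses the cruder estimate $p\le \tfrac{q}{q-1}$ with $q-1\ge \tfrac{\delta}{(\gamma-1)(1+\delta)\lambda}$ and $q\le 1+\delta$.

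The main obstacle is the purely computational one in part (ii): producing the lower bound on $\varepsilon=q-1$ of the form $c/\lambda$ with the precise constant so that the resulting bound on $p^{\gamma-1}$ matches $\tilde C\lambda^{\gamma-1}$ exactly. This is where the convexity-type inequality $(1+s)^{1/(\gamma-1)}\ge 1+\tfrac{s}{(\gamma-1)(1+s)}$ (equivalently $(1+s)^{(\gamma-2)/(\gamma-1)}\le 1+s$, which holds since $0\le\tfrac{\gamma-2}{\gamma-1}<1$) does the work; everything else is bookkeeping. Part (i) itself is immediate from convexity and homogeneity and presents no difficulty.
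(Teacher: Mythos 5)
Your part (i) is exactly the paper's proof (convexity of $F_\gamma$ applied to $x+y=\frac{px}{p}+\frac{qy}{q}$ plus homogeneity), and your part (ii) follows the same route as the paper: choose $q=(1+\frac{\delta}{\lambda})^{1/(\gamma-1)}$ so that $q^{\gamma-1}=1+\frac{\delta}{\lambda}$, and then bound $p$ by a mean-value/concavity estimate for the increment of $t\mapsto t^{1/(\gamma-1)}$ evaluated at the right endpoint, followed by $\lambda+\delta\le(1+\delta)\lambda$. The paper phrases this as $\frac{1}{p}=\frac{(\lambda+\delta)^{1/(\gamma-1)}-\lambda^{1/(\gamma-1)}}{(\lambda+\delta)^{1/(\gamma-1)}}\ge\frac{\delta}{(\gamma-1)(\lambda+\delta)}$, hence $p\le\frac{(\gamma-1)(\lambda+\delta)}{\delta}\le\lambda\tilde{C}^{1/(\gamma-1)}$; your $\varepsilon=q-1$ formulation is the same estimate after dividing through by $\lambda^{1/(\gamma-1)}$.

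One point of execution does not yet deliver the stated constant: after weakening $(1+s)^{(\gamma-2)/(\gamma-1)}$ to $1+s$ and replacing $1+\delta/\lambda$ by $1+\delta$, your bound $\varepsilon\ge\frac{\delta}{(\gamma-1)(1+\delta)\lambda}$ combined with $p^{\gamma-1}=\frac{1+\delta/\lambda}{\varepsilon^{\gamma-1}}$ leaves the spurious factor $1+\frac{\delta}{\lambda}$, and the fallback you suggest ($p\le\frac{q}{q-1}$ with $q\le 1+\delta$) gives at best $(1+\delta)\,\tilde{C}\lambda^{\gamma-1}$, not $\tilde{C}\lambda^{\gamma-1}$. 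The fix is simply to keep the sharper inequality you already wrote down: with $s=\delta/\lambda$,
\begin{equation*}
p=\frac{(1+s)^{1/(\gamma-1)}}{\varepsilon}\le\frac{(1+s)^{1/(\gamma-1)}(\gamma-1)(1+s)^{(\gamma-2)/(\gamma-1)}}{s}=\frac{(\gamma-1)(\lambda+\delta)}{\delta}\le\lambda\,\tilde{C}^{1/(\gamma-1)},
\end{equation*}
which is precisely the paper's computation and yields $p^{\gamma-1}\le\tilde{C}\lambda^{\gamma-1}$ exactly. With that adjustment your argument is complete and coincides with the paper's.
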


\begin{proof}
(i) Since $\gamma \geq 2$ we have that $F_\gamma$ is convex. Thus, we obtain for $x,y \in \mathbb{R}$
\begin{align*}
F_\gamma (x+y) = F_\gamma \Big(\frac{px}{p} + \frac{qy}{q}\Big) \leq \frac{F_\gamma (px)}{p} + \frac{F_\gamma (qy)}{q} = p^{\gamma-1} F_\gamma (x) + q^{\gamma-1} F_\gamma (y).
\end{align*}
(ii) Set $q = \Big(1+\frac{\delta}{\lambda}\Big)^\frac{1}{\gamma-1} >1$ and choose $p>1$ such that $\frac{1}{p}+\frac{1}{q}=1$. Then we have
\begin{align*}
\frac{1}{p} = 1 - \frac{\lambda^\frac{1}{\gamma-1}}{(\lambda+\delta)^\frac{1}{\gamma-1	}} &= \frac{(\lambda+\delta)^\frac{1}{\gamma-1} - \lambda^\frac{1}{\gamma-1}}{(\lambda+\delta)^\frac{1}{\gamma-1}} =\frac{1}{(\lambda+\delta)^\frac{1}{\gamma-1}} \Big(\frac{1}{\gamma-1} \int_\lambda^{\lambda+\delta} t^{\frac{1}{\gamma-1}-1} \ \mathrm{d}t \Big) \\ &\geq \frac{1}{\gamma-1} \frac{(\lambda+\delta)^{\frac{1}{\gamma-1}-1}}{(\lambda+\delta)^\frac{1}{\gamma-1}} \delta = \frac{\delta}{(\gamma-1)(\lambda+\delta)},
\end{align*}
where we used that $\gamma \geq 2$ is equivalent to $\frac{1}{\gamma-1}-1\leq 0$. Consequently, we obtain 
\begin{align*}
p \leq \frac{(\gamma-1)(\lambda+\delta)}{\delta} \leq \lambda \frac{(1+\delta)(\gamma-1)}{\delta} = \lambda \tilde{C}^\frac{1}{\gamma-1}.
\end{align*}
The claim follows with $(i)$.
\end{proof}

We now come to the main theorem of this section. Comparing our situation with the one from the proof 
of \cite[Theorem 4.1]{SWZ1}, one of the main difficulties in proving the desired result in the current setting is the missing symmetry of the function $\Upsilon(x)$ compared to the function $x^2$ that occurred in the proof of \cite{SWZ1}. 
Therefore, we will consider the symmetrized version $w$ which has already been introduced in (\ref{w(j)}).

The theorem reads as follows.
\begin{satz}
\label{Satz alphates Moment}
Let $\alpha \in (1,2]$ and $\gamma = \frac{\alpha}{\alpha-1} \geq 2$, that is $\frac{1}{\alpha}+\frac{1}{\gamma}=1$.
Suppose that the kernel $k$ has finite $\alpha$-th moment, i.e.
\begin{equation}
C_\alpha (k) := \sum_{j \in \mathbb{N}} k(j) j^\alpha < \infty.
\label{Kern Bedingung Moment}
\end{equation}
Assume further that $k$ is non-increasing on $\mathbb{N}$ and that
\begin{equation}
k(j) \leq C k(j+1), \ j \in \mathbb{N},
\label{Kern Bedingung Wachsum}
\end{equation}
for some constant $C \geq 1$.
Then there exists a finite $d>0$ such that for any $x \in \mathbb{Z}$ the operator $L$ associated with $k$ satisfies the inequality
\begin{equation}
\Psi_{2,\Upsilon}(u)(x) \geq \frac{1}{d}\big \vert Lu(x)\big \vert ^\gamma, \ u \in \ell^\infty(\mathbb{Z}). 
\label{Theorem Strong CD-inequ}
\end{equation}
In particular, $L$ satisfies $CD_\Upsilon (0,F)$ with the $CD$-function $F = \frac{1}{d} F_\gamma \vert_{[0,\infty)}$, where $F_\gamma$ is the function defined in (\ref{Definition F}).
\end{satz}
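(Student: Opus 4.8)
By Lemma \ref{Lemma u(0)=0} it suffices to treat $x=0$ and $u(0)=0$, and since $\Psi_{2,\Upsilon}\ge 0$ by \eqref{Psi_2H Alternativ} we only have to consider functions with $-Lu(0)>0$. The first move, as in \cite[Theorem 4.1]{SWZ1}, is to pass to the symmetrization $w$ from \eqref{w(j)}: by \eqref{Gleichheit Lw und Lu} we have $Lu(0)=Lw(0)$, and starting from the identity \eqref{Psi_2Ups Vertauschung l und j}, convexity of $\exp$, the reflection $(j,l)\mapsto(-j,-l)$ and convexity of $\Upsilon$ produce a lower bound for $\Psi_{2,\Upsilon}(u)(0)$ that involves only the single sequence $(w(j))_{j\ge 0}$ and the second differences $w(j+l)-w(j)-w(l)$. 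Thus everything reduces to proving $|Lw(0)|^\gamma\le d\,\Psi_{2,\Upsilon}(w)(0)$ for symmetric $w$ with $w(0)=0$ and $Lw(0)=\sum_j k(j)w(j)<0$. Since only \emph{finiteness} of $d$ is claimed, we may be wasteful with constants throughout.

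\textbf{Building blocks.} Two lower bounds for $\Psi_{2,\Upsilon}(w)(0)$ are available. The ``diagonal'' estimate \eqref{Basic estimate Psi_2Palme} together with \eqref{FgammaH} gives $\Psi_{2,\Upsilon}(w)(0)\ge \nu^{-1}\sum_{j}k(j)^2|w(j)|^\gamma$; this controls the contribution of large indices but, as explained in Section \ref{Chapter3}, cannot by itself reproduce a bound with the ``right'' weight $\big(\sum_j k(j)w(j)\big)^\gamma$ when $\mathrm{supp}\,k$ is infinite, so the off-diagonal terms of \eqref{Psi_2H Alternativ} must be retained. The idea is to reconstruct $Lw(0)=\sum_j k(j)w(j)$ from the second differences occurring inside $\Psi_{2,\Upsilon}$ by telescoping $w$ along the indices $1,2,4,\dots$ — where $w(2j)-2w(j)$ comes with a weight $\sim k(j)^2$, and, by the doubling condition \eqref{Kern Bedingung Wachsum}, $k(2j)\sim k(j)$ — and along unit steps, where $w(j+1)-w(j)-w(1)$ comes with a weight $\sim k(j)$, matching the linear weight in $Lw(0)$. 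The error terms accumulating along such a chain are reabsorbed by repeated use of Lemma \ref{Lemma 1}(ii): the factor $1+\delta/\lambda$, which can be made arbitrarily close to $1$ by taking $\lambda$ large (at the price of a larger $\tilde C\lambda^{\gamma-1}$ on the other term), is precisely what prevents a geometric blow-up of the constants over the chain. The finite $\alpha$-th moment \eqref{Kern Bedingung Moment} enters via Hölder/Jensen with the conjugate pair $\alpha,\gamma$ (recall $\tfrac1\alpha+\tfrac1\gamma=1$): it converts the ``length'' of the telescoping chain, weighted against $\sum_j k(j)j^\alpha$, into exactly the $\gamma$-th power of $Lw(0)$, while the monotonicity of $k$ on $\N$ is used repeatedly to compare kernel values along the chain.

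\textbf{Assembly.} One then combines the two bounds through an adaptively chosen cut-off $R$ separating ``near'' indices $|j|\le R$ from ``far'' indices $|j|>R$: the far part of $Lw(0)$ is controlled by the diagonal bound and the moment estimate, the near part by the telescoping/off-diagonal estimate, and $R$ together with the parameters $\delta,\lambda$ from Lemma \ref{Lemma 1} are optimised at the very end. This yields \eqref{Theorem Strong CD-inequ} with some finite $d$, and since $F_\gamma\vert_{[0,\infty)}$ is a $CD$-function, the final assertion ($CD_\Upsilon(0,\tfrac1d F_\gamma)$) follows at once. The case $\alpha=2$, $\gamma=2$ — in which one obtains a quadratic $CD$-function and which, upon letting the amplitude of $u$ tend to zero, is morally the statement $\Gamma_2\ge\tfrac1d(Lu)^2$ — is the analogue of \cite[Theorem 4.1]{SWZ1} and serves as the blueprint for the computation.

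\textbf{Main obstacle.} The genuinely hard part is the off-diagonal/telescoping estimate. In \cite[Theorem 4.1]{SWZ1} the relevant nonlinearity is the \emph{symmetric} quadratic $x\mapsto x^2$, whereas here $\Upsilon$ is asymmetric and, in \eqref{Psi_2H Alternativ}, carries exponential prefactors $e^{w(l)}$ which behave completely differently where $w$ is large positive versus large negative; the symmetrization and a careful pairing of terms (exploiting convexity of both $\exp$ and $\Upsilon$, and \eqref{FgammaH}) are needed to bring the expression into a shape where the telescoping can actually be run and, decisively, where all constants stay finite uniformly in $w$ (and remain bounded as $\alpha\uparrow 2$, so that the quadratic borderline case is genuinely captured). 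The secondary difficulty is purely one of bookkeeping: the interplay between the quadratic weights $k(j)^2$ inside $\Psi_{2,\Upsilon}$, the linear weights $k(j)$ inside $Lw(0)$, the $\alpha$-th moment, and the doubling constant $C$ must be made to close up into a single finite $d$.
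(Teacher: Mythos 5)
Your outline gets the frame right (reduction to $x=0$, $u(0)=0$, the Jensen step with the $\alpha$-th moment, an $\ell^\gamma$-type intermediate bound, Lemma \ref{Lemma 1} for constant control, and finally $CD_\Upsilon(0,\tfrac1d F_\gamma)$), but the core of the argument is missing and the two concrete new ideas you propose in its place do not work as stated. First, your claimed reduction to proving $|Lw(0)|^\gamma\lesssim \Psi_{2,\Upsilon}(w)(0)$ for the symmetrization $w$ is not justified: convexity of $\exp$ and $\Upsilon$ applied to \eqref{Psi_2Ups Vertauschung l und j} does not give $\Psi_{2,\Upsilon}(u)(0)\gtrsim \Psi_{2,\Upsilon}(w)(0)$, because the exponential prefactors $e^{u(j)}$, $e^{u(l)}$ and the asymmetric $\Upsilon$ do not symmetrize compatibly (the averaging that does go through only yields the diagonal bound \eqref{Basic estimate Psi_2Palme}, which you yourself note is insufficient). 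The paper's proof never performs this reduction: it symmetrizes only the target $Lu(0)=Lw(0)$ and keeps $u$ inside $\Psi_{2,\Upsilon}$, which is exactly why it needs the estimates obtained by picking $l=\pm1$ in \eqref{Psi_2Ups Vertauschung l und j} and the nine-case analysis over the index sets $A_1,A_2,B_1,B_2$ to prove $\sum_{j\ge1}k(j)F_\gamma\big(w(j+1)-w(j)\big)\lesssim\Psi_{2,\Upsilon}(u)(0)$. Second, your telescoping scheme along $1,2,4,\dots$ rests on reading \eqref{Kern Bedingung Wachsum} as a doubling condition ``$k(2j)\sim k(j)$''; this is false. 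The hypothesis only compares consecutive values, and iterating it gives $k(j)\le C^{\,j}k(2j)$, an exponentially degrading constant (e.g.\ $k(j)=e^{-aj}$ satisfies \eqref{Kern Bedingung Wachsum} but $k(2j)/k(j)\to0$). So the dyadic chain cannot be closed with uniform constants, and no cut-off optimisation in $R,\delta,\lambda$ repairs this.

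What remains, then, is a description of difficulties rather than a proof: the genuinely hard steps — the paper's Step 2 (controlling $\sum_j k(j)F_\gamma(w(j+1)-w(j))$ by $\Psi_{2,\Upsilon}(u)(0)$, handled via the four estimates \eqref{Estimate 1}--\eqref{Estimate 4}, Lemma \ref{Abschaetzung F Phi}/Remark \ref{BemerkungH}, and the case decomposition $\N=\bigcup_{i=1}^9 C_i$) and Step 3 (the Abel-summation argument with weights $k(j)/j^{\gamma-1}$, using Lemma \ref{Lemma 1}(ii) with $\lambda=j$, the monotonicity of $k$ and convexity of $r\mapsto r^{\gamma-1}$ to absorb the error terms) — are only gestured at. Since these are precisely the places where the asymmetry of $\Upsilon$, the exponential prefactors and the hypotheses \eqref{Kern Bedingung Moment}, \eqref{Kern Bedingung Wachsum} have to be exploited quantitatively, the proposal as written does not constitute a proof of \eqref{Theorem Strong CD-inequ}.
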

\begin{proof}
The proof contains several steps. The line of arguments is inspired by the proof of \cite[Theorem 4.1]{SWZ1} but needs a lot more work. 

We will use the following notation: For two terms $T(u,k)$ and $\tilde{T}(u,k)$ depending on $u \in \ell^\infty(\mathbb{Z})$ and the kernel $k$ we write $T(u,k) \lesssim \tilde{T}(u,k)$ 
if there exists a constant $C>0$ independent of $u$ such that
$
T(u,k) \leq C  \tilde{T}(u,k) .
$

Let $u \in \ell^\infty (\mathbb{Z})$. Throughout the proof we assume w.l.o.g. that $x=0$ and $u(0)=0$, which is possible by Lemma \ref{Lemma u(0)=0}. In the following calculations we consider the symmetrization $w$ of $u$ defined in (\ref{w(j)}).

\emph{Step 1: Jensen estimate.} Using the symmetry of $w$, the convexity of $F_\gamma$ and the fact that $k$ has finite $\alpha$-th moment, we find with Jensen's inequality and (\ref{Gleichheit Lw und Lu})
\begin{align*}
F_\gamma\big(Lu(0)\big) = F_\gamma\big(Lw(0)\big) &= F_\gamma\Big(2 C_\alpha (k) \sum_{j=1}^\infty \frac{k(j) j^\alpha w(j)}{C_\alpha (k) j^\alpha}\Big) \\ &\leq \frac{2^\gamma}{C_\alpha (k)} \sum_{j=1}^\infty k(j) j^\alpha F_\gamma\Big(\frac{C_\alpha (k) w(j)}{j^\alpha}\Big) \\ &= 2^\gamma C_\alpha (k)^{\gamma-1} \sum_{j=1}^\infty \frac{k(j)}{j^{\alpha(\gamma-1)}} F_\gamma \big(w(j)\big) \\ &= 2^\gamma C_\alpha (k)^{\gamma-1} \sum_{j=1}^\infty \frac{k(j)}{j^\gamma} F_\gamma \big(w(j)\big),
\end{align*}
where we used in the last step that $\gamma = \frac{\alpha}{\alpha-1}$ is equivalent to $\alpha (\gamma-1) = \gamma$.

\emph{Step 2: Basic estimate for $\Psi_{2,\Upsilon}$.} This step is the most demanding part of the proof. We aim to show that 
\begin{equation}
\sum_{j=1}^\infty k(j) F_\gamma \big(w(j+1)-w(j)\big) \lesssim \Psi_{2,\Upsilon} (u)(0).
\label{Step 2 to show}
\end{equation}
Thanks to the $j=1$ summand in (\ref{Basic estimate Psi_2Palme}) and inequality \eqref{FgammaH} ($\nu=\nu(\gamma)$) we have
\begin{equation*}
\Psi_{2,\Upsilon}(u)(0) \geq \frac{1}{2} k(1)^2 e^{w(1)}\Upsilon\big(-2w(1)\big) \ge \frac{k(1)^2}{\nu} F_\gamma \big(w(1)\big),
\end{equation*}
which we will use repeatedly in the sequel. We will also need a few other elementary estimates for $\Psi_{2,\Upsilon}(u)(0)$. By picking the terms with $l=1$ respectively $l=-1$ in (\ref{Psi_2Ups Vertauschung l und j}) we get
\begin{align}
&\Psi_{2,\Upsilon}(u)(0) \geq \frac{1}{2} \sum_{j \in \mathbb{Z}} k(j) k(1) e^{u(1)} \Upsilon\big(u(j+1)-u(j)-u(1)\big)
\label{Estimate 1}
\end{align}
and
\begin{align}
\Psi_{2,\Upsilon}(u)(0) \geq \frac{1}{2} \sum_{j \in \mathbb{Z}} k(j) k(1) e^{u(-1)} \Upsilon\big(u(-(j+1))-u(-j)-u(-1)\big). 
\label{Estimate 2}
\end{align}
Furthermore, with an additional index shift we find
\begin{align}
\Psi_{2,\Upsilon}(u)(0) \ &\geq \ \frac{1}{2} \sum_{j \in \mathbb{Z}} k(j) k(1) e^{u(-1)} \Upsilon\big(u(j-1)-u(j)-u(-1)\big) \notag \\ &=  \ \frac{1}{2} \sum_{j \in \mathbb{Z}} k(j+1) k(1) e^{u(-1)} \Upsilon\big(u(j)-u(j+1)-u(-1)\big)
\label{Estimate 3}
\end{align}
and
\begin{align}
\Psi_{2,\Upsilon}(u)(0) \ &\geq \ \frac{1}{2} \sum_{j \in \mathbb{Z}} k(-j) k(1) e^{u(1)} \Upsilon\big(u(-j+1)-u(-j)-u(1)\big) \notag \\ &= \ \frac{1}{2} \sum_{j \in \mathbb{Z}} k(j+1) k(1) e^{u(1)} \Upsilon\big(u(-j)-u(-(j+1))-u(1)\big).
\label{Estimate 4}
\end{align}
We now define index sets $A_i,B_i \ (i=1,2)$ by
\begin{align*}
&A_1 := \{i \in \mathbb{N}: u(i+1)-u(i) \geq -u(1) \}, \\
&A_2 := \{i \in \mathbb{N}: u(i+1)-u(i) \leq u(-1) \}, \\
&B_1 := \{i \in \mathbb{N}: u(-(i+1))-u(-i) \geq -u(-1) \} \\
\text{and } & B_2 := \{i \in \mathbb{N}: u(-(i+1))-u(-i) \leq u(1) \}.
\end{align*}
Define further $A := A_1 \cup A_2$ and $B := B_1 \cup B_2$. The choice of these sets becomes clear in what follows. First let us note that by Remark \ref{BemerkungH}
and (\ref{Estimate 1}) we have
\begin{align}
\sum_{j \in A_1} & k(j) F_\gamma \big(u(j+1)-u(j)-u(1)\big) \notag \\ &\lesssim \sum_{j \in A_1} k(j) \exp\Big(-\frac{u(j+1)-u(j)-u(1)}{2}\Big) \Upsilon\big(u(j+1)-u(j)-u(1)\big) \notag \\ &\leq \sum_{j \in \mathbb{Z}} k(j) e^{u(1)} \Upsilon\big(u(j+1)-u(j)-u(1)\big) \lesssim \Psi_{2,\Upsilon}(u)(0).
\label{Case 1a}
\end{align} 
Analogeously,
\begin{align}
\sum_{j \in B_1} & k(j) F_\gamma \big(u(-(j+1))-u(-j)-u(-1)\big) \notag \\ &\lesssim \sum_{j \in B_1} k(j) \exp\Big(-\frac{u(-(j+1))-u(-j)-u(-1)}{2}\Big) \Upsilon\big(u(-(j+1))-u(-j)-u(-1)\big) \notag \\ &\leq \sum_{j \in \mathbb{Z}}  k(j) e^{u(-1)} \Upsilon\big(u(-(j+1))-u(-j)-u(-1)\big) \lesssim \Psi_{2,\Upsilon}(u)(0)
\label{Case 2a}
\end{align} 
holds by (\ref{Estimate 2}). Using assumption \eqref{Kern Bedingung Wachsum}, we have
\begin{align}
\sum_{j \in A_2}& k(j) F_\gamma \big(u(j)-u(j+1)-u(-1)\big) \notag \\ &\lesssim \sum_{j \in A_2} k(j+1) \exp\Big(-\frac{u(j)-u(j+1)-u(-1)}{2}\Big) \Upsilon\big(u(j)-u(j+1)-u(-1)\big) \notag \\ &\leq \sum_{j \in \mathbb{Z}} k(j+1) e^{u(-1)} \Upsilon\big(u(j)-u(j+1)-u(-1)\big) \lesssim \Psi_{2,\Upsilon}(u)(0)
\label{Case 1b}
\end{align} 
by (\ref{Estimate 3}), and finally (\ref{Estimate 4}) yields
\begin{align}
\sum_{j \in B_2} & k(j) F_\gamma \big(u(-j)-u(-(j+1))-u(1)\big) \notag \\ &\lesssim \sum_{j \in B_2} k(j+1) \exp\Big(-\frac{u(-j)-u(-(j+1))-u(1)}{2}\Big) \Upsilon\big(u(-j)-u(-(j+1))-u(1)\big) \notag \\ &\leq \sum_{j \in \mathbb{Z}}  k(j+1) e^{u(1)} \Upsilon\big(u(-j)-u(-(j+1))-u(1)\big) \lesssim \Psi_{2,\Upsilon}(u)(0).
\label{Case 2b}
\end{align} 
In the following we will consider several index sets $C_i \ (i=1,2,\dots,9)$ such that $\mathbb{N} = \bigcup\limits_{i}^9 C_i$. For any $i$, we will show that 
\begin{align*}
\sum_{j \in C_i} k(j) F_\gamma \big(w(j+1)-w(j)\big) \lesssim \Psi_{2,\Upsilon}(u)(0),
\end{align*}
from which we deduce that (\ref{Step 2 to show}) holds true.

\underline{\emph{Subcase 1. Indices $j$ with $j \in A \cap B$:}} In all calculations in this subcase we will use Lemma \ref{Lemma 1}. For $C_1 := A_1 \cap B_1$ we get
\begin{align*}
\sum_{j \in C_1} & k(j) F_\gamma\big(w(j+1)-w(j)\big) \lesssim \sum_{j \in C_1} k(j) F_\gamma \big(w(j+1)-w(j)-w(1)\big) + \sum_{j \in C_1} k(j) F_\gamma \big(w(1)\big) \\ &\lesssim \sum_{j \in A_1} k(j) F_\gamma \big(u(j+1)-u(j)-u(1)\big) + \sum_{j \in B_1} k(j) F_\gamma \big(u(-(j+1))-u(-j)-u(-1)\big) \\ & \quad\quad+ \sum_{j=1}^\infty k(j) F_\gamma \big(w(1)\big) \\ &\lesssim \Psi_{2,\Upsilon}(u)(0).
\end{align*}
For $C_2 := A_2 \cap B_2$ we argue analogeously by considering 
\begin{align*}
\sum_{j \in C_2} k(j) F_\gamma \big(w(j+1)-w(j)\big) &\lesssim \sum_{j \in C_2} k(j) F_\gamma \big(w(j+1)-w(j)+w(1)\big) + \sum_{j \in C_2} k(j) F_\gamma \big(w(1)\big) \\ &\lesssim \Psi_{2,\Upsilon}(u)(0).
\end{align*}
Defining $C_3 := A_1 \cap B_2$, we have
\begin{align*}
\sum_{j \in C_3} & k(j) F_\gamma \big(w(j+1)-w(j)\big) = \sum_{j \in C_3} k(j) F_\gamma \Big(w(j+1)-w(j)-\frac{u(1)}{2}+\frac{u(1)}{2}\Big) \\ &\lesssim \sum_{j \in A_1} k(j) F_\gamma \big(u(j+1)-u(j)-u(1)\big) + \sum_{j \in B_2} k(j) F_\gamma \big(u(-(j+1))-u(-j)+u(1)\big) \\ &\lesssim \Psi_{2,\Upsilon}(u)(0)
\end{align*}
and finally for $C_4 := A_2 \cap B_1$ we get
\begin{align*}
\sum_{j \in C_4}& k(j) F_\gamma \big(w(j+1)-w(j)\big) = \sum_{j \in C_4} k(j) F_\gamma \Big(w(j+1)-w(j)-\frac{u(-1)}{2}+\frac{u(-1)}{2}\Big) \\ &\lesssim \sum_{j \in A_2} k(j) F_\gamma \big(u(j+1)-u(j)+u(-1)\big) + \sum_{j \in B_1} k(j) F_\gamma \big(u(-(j+1))-u(-j)-u(-1)\big) \\ &\lesssim \Psi_{2,\Upsilon}(u)(0).
\end{align*} 
Note that $A \cap B = \bigcup\limits_{i=1}^4 C_i$.

\underline{\emph{Subcase 2. Indices $j$ with $j \in A^\mathrm{C} \cap B^\mathrm{C} =: C_5$:}} Since $j \in A^\mathrm{C}$ we have
\begin{equation}
u(-1) < u(j+1)-u(j) < -u(1).
\label{Proof:Subcase4.1}
\end{equation}
As we also have $j \in B^\mathrm{C}$ we further see that
\begin{equation}
u(1) < u(-(j+1))-u(-j) < -u(-1).
\label{Proof:Subcase3.1}
\end{equation}
Adding these two inequalities we arrive at 
\begin{equation*}
2w(1) < 2\big(w(j+1)-w(j)\big) < -2w(1),
\end{equation*}
i.e. $\vert w(j+1)-w(j) \vert < \vert w(1) \vert$. This yields
\begin{equation*}
\sum_{j \in C_5} k(j) F_\gamma \big(w(j+1)-w(j)\big) \leq \sum_{j=1}^\infty k(j) F_\gamma\big(w(1)\big) = \frac{1}{2} \vert k \vert _1 F_\gamma\big(w(1)\big) \lesssim \Psi_{2,\Upsilon}(u)(0).
\end{equation*}

\underline{\emph{Subcase 3. Indices $j$ with $j \in A \cap B^\mathrm{C}$:}} 
By $j \in B^\mathrm{C}$ we know that (\ref{Proof:Subcase3.1}) is satisfied. Hence there holds
\begin{align*}
2w(1) < u(-(j+1))-u(-j)+u(-1) < 0 \text{ and } 0 < u(-(j+1))-u(-j)-u(1) < -2w(1),
\end{align*}
which yields 
\begin{align}
\sum_{j \in B^\mathrm{C}} k(j) F_\gamma \big(u(-(j+1))-u(-j)+u(-1)\big) \leq \frac{1}{2} \vert k \vert_1 F_\gamma \big(2 w(1)\big) \lesssim \Psi_{2,\Upsilon}(u)(0)
\label{Case 4.2.1}
\end{align}
and
\begin{align}
\sum_{j \in B^\mathrm{C}} k(j) F_\gamma \big(u(-(j+1))-u(-j)-u(1) \big) \leq \frac{1}{2} \vert k \vert_1 F_\gamma \big(2 w(1)\big) \lesssim \Psi_{2,\Upsilon}(u)(0).
\label{Case 4.2.2}
\end{align}
Further, we infer from \eqref{Proof:Subcase3.1} that $w(1)<0$ holds.
\begin{itemize}
\item \dashuline{$j \in A_1 \cap A_2 \cap B^\mathrm{C}$:} Since $j \in A_1 \cap A_2$ we have
\begin{align*}
-u(1) \leq u(j+1)-u(j) \leq u(-1),
\end{align*} 
from which we would get $w(1) \geq 0$. As we have already seen that $w(1)<0$ must hold, this case cannot happen.
\item \dashuline{$j \in A_1 \cap A_2^\mathrm{C} \cap B^\mathrm{C} =: C_6$:} Since $j \in C_6$ we have
\begin{align*}
u(j+1)-u(j) \geq -u(1) \text{ and } u(j+1)-u(j)>u(-1).
\end{align*}
In particular, together with (\ref{Proof:Subcase3.1}) we observe that $2\big(w(j+1)-w(j)\big) \geq u(1)-u(1) =0$. We have to distinguish two different cases.
\begin{itemize}
\item[(i)] Looking first at the case in which $u(1) \leq 0$, we get with (\ref{Case 1a}), (\ref{Case 4.2.2}) and Lemma \ref{Lemma 1}
\begin{align*}
&\sum_{j \in C_6} k(j) F_\gamma \big(w(j+1)-w(j)\big) \leq \sum_{j \in C_6} k(j) F_\gamma \big(w(j+1)-w(j)-u(1)\big) \\ &\lesssim \sum_{j \in A_1} k(j) F_\gamma \big(u(j+1)-u(j)-u(1)\big) + \sum_{j \in B^\mathrm{C}} k(j) F_\gamma \big(u(-(j+1))-u(-j)-u(1)\big) \\ &\lesssim \Psi_{2,\Upsilon}(u)(0).
\end{align*}
\item[(ii)] Consider now the case $u(1)>0$. Then we know by (\ref{Proof:Subcase3.1}) that $u(-(j+1))-u(-j) > 0$ and $u(-1)<0$ hold. By Lemma \ref{Abschaetzung F Phi} we hence observe that
\begin{align*}
F_\gamma \big(u(-(j+1))-u(-j)\big) \lesssim e^{u(-1)} \Upsilon\big(u(-(j+1))-u(-j)-u(-1)\big), 
\end{align*}
and therefore (\ref{Estimate 2}) shows
\begin{align*}
\sum_{j \in C_6} k(j) F_\gamma\big(u(-(j+1))-u(-j)\big) &\lesssim \sum_{j=1}^\infty k(j) e^{u(-1)} \Upsilon\big(u(-(j+1))-u(-j)-u(-1)\big) \\ &\lesssim \Psi_{2,\Upsilon}(u)(0).
\end{align*}
As $j \in B_2^\mathrm{C}$, we know that $u(-(j+1))-u(-j)-u(1) > 0$. Thus, together with (\ref{Case 1a}) and Lemma \ref{Lemma 1}, we obtain that
\begin{align*}
& \sum_{j \in C_6} k(j) F_\gamma \big(w(j+1)-w(j)\big) \\ &\leq \sum_{j \in C_6} k(j) F_\gamma \Big(w(j+1)-w(j)+\frac{u(-(j+1))-u(-j)-u(1)}{2}\Big) \\ &\lesssim \sum_{j \in A_1} k(j) F_\gamma \big(u(j+1)-u(j)-u(1)\big) + \sum_{j \in C_6} k(j) F_\gamma \big(u(-(j+1))-u(-j)\big) \\ &\lesssim \Psi_{2,\Upsilon}(u)(0).
\end{align*}
\end{itemize}
\item \dashuline{$j \in A_1^\mathrm{C} \cap A_2 \cap B^\mathrm{C} =: C_7$:} Since $j \in C_7$ we get 
\begin{align*}
u(j+1)-u(j) < -u(1) \text{ and } u(j+1)-u(j) \leq u(-1).
\end{align*}
In particular, together with (\ref{Proof:Subcase3.1}), we observe that $2\big(w(j+1)-w(j)\big) \leq u(-1)-u(-1) =0$. We distinguish the following two cases.
\begin{itemize}
\item[(i)] Looking first at the case in which $u(-1) \leq 0$ we get with (\ref{Case 1b}), (\ref{Case 4.2.1}) and Lemma \ref{Lemma 1}
\begin{align*}
&\sum_{j \in C_7} k(j) F_\gamma \big(w(j+1)-w(j)\big) \leq \sum_{j \in C_7} k(j) F_\gamma \big(w(j+1)-w(j)+u(-1)\big) \\ &\lesssim \sum_{j \in A_2} k(j) F_\gamma \big(u(j+1)-u(j)+u(-1)\big) + \sum_{j \in B^\mathrm{C}} k(j) F_\gamma \big(u(-(j+1))-u(-j)+u(-1)\big) \\ &\lesssim \Psi_{2,\Upsilon}(u)(0).
\end{align*}
\item[(ii)] Consider now the case $u(-1)>0$. Then we know by (\ref{Proof:Subcase3.1}) that $u(-(j+1))-u(-j) < 0$ and $u(1)<0$. By Lemma \ref{Abschaetzung F Phi} we hence obtain that
\begin{align*}
F_\gamma \big(u(-j)-u(-(j+1))\big) \lesssim e^{u(1)} \Upsilon\big(u(-j)-u(-(j+1))-u(1)\big),
\end{align*}
and therefore (\ref{Estimate 4}), together with (\ref{Kern Bedingung Wachsum}), shows that
\begin{align*}
\sum_{j \in C_7} k(j) F_\gamma \big(u(-j)-u(-(j+1))\big) &\lesssim \sum_{j=1}^\infty k(j+1) e^{u(1)} \Upsilon\big(u(-j)-u(-(j+1))-u(1)\big) \\ &\lesssim \Psi_{2,\Upsilon}(u)(0).
\end{align*}
As $j \in B_1^\mathrm{C}$, we know that $u(-(j+1))-u(-j)+u(-1) < 0$. Thus, together with (\ref{Case 1b}) and Lemma \ref{Lemma 1}, we find
\begin{align*}
& \sum_{j \in C_7} k(j) F_\gamma \big(w(j+1)-w(j)\big) \\ &\leq \sum_{j \in C_7} k(j) F_\gamma \Big(w(j+1)-w(j)+\frac{u(-(j+1))-u(-j)+u(-1)}{2}\Big) \\ &\lesssim \sum_{j \in A_2} k(j) F_\gamma \big(u(j+1)-u(j)+u(-1)\big) + \sum_{j \in C_7}^\infty k(j) F_\gamma \big(u(-j)-u(-(j+1))\big) \\ &\lesssim \Psi_{2,\Upsilon}(u)(0).
\end{align*}
\end{itemize}
\end{itemize}

\underline{\emph{Subcase 4. Indices $j$ with $j \in A^\mathrm{C} \cap B$:}} By $j \in A^\mathrm{C}$ we know that (\ref{Proof:Subcase4.1}) is satisfied. Hence we observe that
\begin{align*}
2w(1) < u(j+1)-u(j)+u(1) < 0 \text{ and } 0 < u(j+1)-u(j)-u(-1) < -2w(1).
\end{align*}
This gives 
\begin{align}
\sum_{j \in A^\mathrm{C}} k(j) F_\gamma \big(u(j+1)-u(j)+u(1)\big) \leq \frac{1}{2} \vert k \vert_1 F_\gamma \big(2 w(1)\big) \lesssim \Psi_{2,\Upsilon}(u)(0)
\label{Case 4.1.1}
\end{align}
and 
\begin{align}
\sum_{j \in A^\mathrm{C}} k(j) F_\gamma \big(u(j+1)-u(j)-u(-1)\big) \leq \frac{1}{2} \vert k \vert_1 F_\gamma \big(2 w(1)\big) \lesssim \Psi_{2,\Upsilon}(u)(0).
\label{Case 4.1.2}
\end{align}
Moreover, we also deduce from \eqref{Proof:Subcase4.1} that $w(1)<0$ holds.
\begin{itemize}
\item \dashuline{$j \in A^\mathrm{C} \cap B_1 \cap B_2$:} Since $j \in B_1 \cap B_2$ we have
\begin{align*}
-u(-1) \leq u(-(j+1))-u(-j) \leq u(1),
\end{align*}
from which we would get $w(1) \geq 0$. As we have already seen that $w(1)<0$ must hold, this case cannot occur.
\item \dashuline{$j \in A^\mathrm{C} \cap B_1 \cap B_2^\mathrm{C} =: C_8$:} Since $j \in C_8$ we get
\begin{align*}
u(-(j+1))-u(-j) \geq -u(-1) \text{ and } u(-(j+1))-u(-j)>u(1).
\end{align*}
In particular, together with (\ref{Proof:Subcase4.1}), we observe that $2\big(w(j+1)-w(j)\big) \geq u(-1)-u(-1) =0$. We again distinguish two cases.
\begin{itemize}
\item[(i)] Looking first at the case in which $u(-1) \leq 0$ we get with (\ref{Case 2a}), (\ref{Case 4.1.2}) and Lemma \ref{Lemma 1}
\begin{align*}
& \sum_{j \in C_8} k(j) F_\gamma \big(w(j+1)-w(j)\big) \leq \sum_{j \in C_8} k(j) F_\gamma \big(w(j+1)-w(j)-u(-1)\big) \\ &\lesssim \sum_{j \in A^\mathrm{C}} k(j) F_\gamma \big(u(j+1)-u(j)-u(-1)\big) + \sum_{j \in B_1} k(j) F_\gamma \big(u(-(j+1))-u(-j)-u(-1)\big) \\ &\lesssim \Psi_{2,\Upsilon}(u)(0).
\end{align*}
\item[(ii)] Consider now the case $u(-1)>0$. Then we know from (\ref{Proof:Subcase4.1}) that $u(j+1)-u(j) > 0$ and $u(1)<0$. By Lemma \ref{Abschaetzung F Phi} we hence observe that
\begin{align*}
F_\gamma \big(u(j+1)-u(j)\big) \lesssim e^{u(1)} \Upsilon\big(u(j+1)-u(j)-u(1)\big), 
\end{align*}
and consequently (\ref{Estimate 1}) shows that
\begin{align*}
\sum_{j \in C_8} k(j) F_\gamma \big(u(j+1)-u(j)\big) \lesssim \sum_{j=1}^\infty k(j) e^{u(1)} \Upsilon\big(u(j+1)-u(j)-u(1)\big) \lesssim \Psi_{2,\Upsilon}(u)(0).
\end{align*}
As $j \in A_2^\mathrm{C}$, we know that $u(j+1)-u(j)-u(-1) > 0$. Thus, together with (\ref{Case 2a}) and Lemma \ref{Lemma 1}, we see that
\begin{align*}
\sum_{j \in C_8}&  k(j) F_\gamma \big(w(j+1)-w(j)\big) \leq \sum_{j \in C_8} k(j) F_\gamma \Big(w(j+1)-w(j)+\frac{u(j+1)-u(j)-u(-1)}{2}\Big) \\ &\lesssim \sum_{j \in B_1} k(j) F_\gamma \big(u(-(j+1))-u(-j)-u(-1)\big) + \sum_{j \in C_8} k(j) F_\gamma \big(u(j+1)-u(j)\big) \\ &\lesssim \Psi_{2,\Upsilon}(u)(0).
\end{align*}
\end{itemize}
\item \dashuline{$j \in A^\mathrm{C} \cap B_1^\mathrm{C} \cap B_2 =: C_9$:} Since $j \in C_9$ we get 
\begin{align*}
u(-(j+1))-u(-j) < -u(-1) \text{ and } u(-(j+1))-u(-j) \leq u(1).
\end{align*}
In particular, together with (\ref{Proof:Subcase4.1}), we observe that $2\big(w(j+1)-w(j)\big) \leq u(1)-u(1) =0$. We distinguish the following two cases.
\begin{itemize}
\item[(i)] Looking first at the case in which $u(1) \leq 0$ we get with (\ref{Case 2b}), (\ref{Case 4.1.1}) and Lemma \ref{Lemma 1}
\begin{align*}
& \sum_{j \in C_9} k(j) F_\gamma \big(w(j+1)-w(j)\big) \leq \sum_{j \in C_9} k(j) F_\gamma \big(w(j+1)-w(j)+u(1)\big) \\ &\lesssim \sum_{j \in A^\mathrm{C}} k(j) F_\gamma \big(u(j+1)-u(j)+u(1)\big) + \sum_{j \in B_2} k(j) F_\gamma \big(u(-(j+1))-u(-j)+u(1)\big) \\ &\lesssim \Psi_{2,\Upsilon}(u)(0).
\end{align*}
\item[(ii)] Consider now the case $u(1)>0$. Then we know by (\ref{Proof:Subcase4.1}) that $u(j+1)-u(j) < 0$ and $u(-1)<0$. By Lemma \ref{Abschaetzung F Phi} we hence observe that
\begin{align*}
F_\gamma \big(u(j)-u(j+1)\big) \lesssim e^{u(-1)} \Upsilon\big(u(j)-u(j+1)-u(-1)\big), 
\end{align*}
and thus (\ref{Estimate 3}), together with (\ref{Kern Bedingung Wachsum}), shows that
\begin{align*}
\sum_{j \in C_9} k(j) F_\gamma \big(u(j)-u(j+1)\big) &\lesssim \sum_{j=1}^\infty k(j+1) e^{u(-1)} \Upsilon\big(u(j)-u(j+1)-u(-1)\big) \\ &\lesssim \Psi_{2,\Upsilon}(u)(0).
\end{align*}
As $j \in A_1^\mathrm{C}$, we know that $u(j+1)-u(j)+u(1) < 0$. Thus, together with (\ref{Case 2b}) and Lemma \ref{Lemma 1}, we find
\begin{align*}
\sum_{j \in C_9}^\infty & k(j) F_\gamma \big(w(j+1)-w(j)\big) \leq \sum_{j \in C_9} k(j) F_\gamma \Big(w(j+1)-w(j)+\frac{u(j+1)-u(j)+u(1)}{2}\Big) \\ &\lesssim \sum_{j \in B_2} k(j) F_\gamma \big(u(-(j+1))-u(-j)+u(1)\big) + \sum_{j \in C_9} k(j) F_\gamma \big(u(j)-u(j+1)\big) \\ &\lesssim \Psi_{2,\Upsilon}(u)(0).
\end{align*}
\end{itemize}
\end{itemize}
As $\N = \bigcup_{i=1}^9 C_i$, we conclude that
\begin{equation*}
\sum_{j=1}^\infty k(j) F_\gamma \big(w(j+1)-w(j)\big) \leq \sum_{i=1}^9 \sum_{j \in C_i} k(j) F_\gamma \big(w(j+1)-w(j)\big)\lesssim \Psi_{2,\Upsilon} (u)(0),
\end{equation*} 
i.e. (\ref{Step 2 to show}) holds true.

\emph{Step 3: Estimating a weighted $l_\gamma$-norm of $w$ by $\Psi_{2,\Upsilon}(u)(0)$.} Let $N \in \mathbb{N}$ with $N \geq 2$. Then by Lemma \ref{Lemma 1} we have for any $\delta \in (0,\gamma-1)$
\begin{align*}
\sum_{j=1}^N \frac{k(j)}{j^{\gamma-1}} F_\gamma \big(w(j+1)\big) &= \sum_{j=1}^N \frac{k(j)}{j^{\gamma-1}} F_\gamma \big(w(j+1)-w(j)+w(j)\big) \\ &\leq \sum_{j=1}^N \frac{k(j)}{j^{\gamma-1}} \tilde{C} j^{\gamma-1} F_\gamma \big(w(j+1)-w(j)\big) +\sum_{j=1}^N \frac{k(j)}{j^{\gamma-1}} \Big(1+\frac{\delta}{j}\Big) F_\gamma \big(w(j)\big) \\ &\leq \tilde{C} \sum_{j=1}^N k(j) F_\gamma \big(w(j+1)-w(j)\big) + k(1) (1+\delta) F_\gamma \big(w(1)\big) \\ &\quad\quad + \sum_{j=1}^N \frac{k(j+1)}{(j+1)^{\gamma-1}} \Big(1+\frac{\delta}{j+1}\Big) F_\gamma \big(w(j+1)\big).
\end{align*}
Due to \emph{Step 2}, we observe that 
\begin{align*}
\sum_{j=1}^N \Lambda(j,\delta) F_\gamma \big(w(j+1)\big) \lesssim \Psi_{2,\Upsilon}(u)(0),
\end{align*}
where (using the monotonicity of $k$)
\begin{align*}
\Lambda (j,\delta) &= \frac{k(j)}{j^{\gamma-1}}-\frac{k(j+1)}{(j+1)^{\gamma-1}}-\delta \frac{k(j+1)}{(j+1)^\gamma} \\ &\geq k(j+1) \Big(\frac{1}{j^{\gamma-1}}-\frac{1}{(j+1)^{\gamma-1}}-\frac{\delta}{(j+1)^\gamma} \Big)\\ &= k(j+1)\Big(\frac{(j+1)^{\gamma-1} - j^{\gamma-1}}{j^{\gamma-1}(j+1)^{\gamma-1}} - \frac{\delta}{(j+1)^\gamma} \Big).
\end{align*}
By convexity of the mapping $r \mapsto r^{\gamma-1} \ (\gamma \geq 2)$, we have that $(j+1)^{\gamma-1}-j^{\gamma-1} \geq (\gamma-1) j^{\gamma-2}$ and hence
\begin{equation*}
\Lambda (j,\delta) \geq k(j+1) \Big(\frac{\gamma-1}{j (j+1)^{\gamma-1}} - \frac{\delta}{(j+1)^\gamma}\Big) \geq k(j+1) \frac{\gamma-1-\delta}{(j+1)^\gamma}.
\end{equation*}
Thus, 
\begin{equation}
\sum_{j=1}^N \frac{k(j+1)}{(j+1)^\gamma} F\big(w(j+1)\big) \leq \frac{1}{\gamma-1-\delta} \sum_{j=1}^N \Lambda (j,\delta) F\big(w(j+1)\big) \lesssim \Psi_{2,\Upsilon}(u)(0).
\label{Step 3 Summe bis N}
\end{equation}
Taking the limit as $N \to \infty$ in (\ref{Step 3 Summe bis N}) finally gives
\begin{equation*}
\sum_{j=1}^\infty \frac{k(j+1)}{(j+1)^\gamma} F\big(w(j+1)\big) \lesssim \Psi_{2,\Upsilon}(u)(0).
\end{equation*}

\emph{Step 4: Combining Step 1 and Step 3.} If we now combine \emph{Step 1} and \emph{Step 3}, we arrive at
\begin{align*}
F_\gamma \big(Lu(0)\big) &\leq 2^\gamma C_\alpha (k)^{\gamma-1} \sum_{j=1}^\infty \frac{k(j)}{j^\gamma} F_\gamma \big(w(j)\big) \\ &= 2^\gamma C_\alpha (k)^{\gamma-1} \Big(k(1)F_\gamma \big(w(1)\big) + \sum_{j=1}^\infty \frac{k(j+1)}{(j+1)^\gamma} F_\gamma \big(w(j+1)\big) \Big) \lesssim \Psi_{2,\Upsilon}(u)(0),
\end{align*}
i.e. there is a $d>0$ such that $\Psi_{2,\Upsilon}(u)(0) \geq \frac{1}{d} F_\gamma \big(Lu(0)\big) = \frac{1}{d} \big \vert Lu(0)\big \vert ^\gamma$. The corresponding estimate holds at any $x \in \mathbb{Z}$, by Lemma \ref{Lemma u(0)=0} and thus (\ref{Theorem Strong CD-inequ}) is proved. Considering now the $CD$-function $F = \frac{1}{d} F_\gamma \vert_{[0,\infty)}$, we finally conclude that the operator $L$ generated by $k$ satisfies $CD_\Upsilon(0,F)$.
\end{proof}

\theoremstyle{remark}
\begin{bemerkungen}
\label{BemerkungSatzAlphatesMom}
(i) The statement in Theorem \ref{Satz alphates Moment} is even stronger than a $CD_\Upsilon(0,F)$-condition as estimate (\ref{Theorem Strong CD-inequ}) also gives a non-trivial lower bound for $\Psi_{2,\Upsilon}(u)(x)$ in case that $-Lu(x)<0$. However, in the applications one only needs the estimate with $-Lu(x)\ge 0$.

(ii) Setting $\alpha = 2$ in estimate (\ref{Theorem Strong CD-inequ}) yields the validity of $CD_\Upsilon(0,d)$. More precisely, every operator $L$ generated by a kernel with finite second moment that is non-increasing on $\mathbb{N}$ and for which we find a suitable constant $C \geq 1$ such that (\ref{Kern Bedingung Wachsum}) holds satisfies (\ref{Theorem Strong CD-inequ}) with the $CD$-function $F(x) =\frac{1}{d} x^2$. As the condition $CD(0,d)$ is necessary for $CD_\Upsilon (0,d)$ to hold (cf. \cite[Remark 2.4(ii)]{WEB}), Theorem \ref{Satz alphates Moment} can be seen as a generalization of \cite[Theorem 4.1]{SWZ1} aside from the weak additional assumption \eqref{Kern Bedingung Wachsum}.

(iii) The condition \eqref{Kern Bedingung Wachsum} is counter-intuitive at a first glance, since it states that the speed of decrease is controlled. It is in particular violated for kernels that have finite support, which are usually considered as the best possible situations. However, there also exist kernels that violate condition \eqref{Kern Bedingung Wachsum} not for the reason of decaying too fast. Indeed, we will now present a class of integrable kernels that do neither satisfy \eqref{Kern Bedingung Wachsum} nor the mild integrability condition \eqref{conditiondelta}. Further, even the weaker condition \eqref{conditionlog} is violated for a certain range of parameters. Let the kernel $k$ be given as in Remark \ref{rem:kernelforcond2compare} for some $\alpha \geq 1$. Further, let $h(n)=n!$ for $n \in \mathbb{N}$. Then, we define the kernel $\tilde{k}$ by
\begin{equation*}
\tilde{k}(j)=k(h(n-1)), \text{ if } h(n-1) \leq |j| < h(n), n \geq 2.
\end{equation*}
We have that
\begin{align*}
\frac{\tilde{k}(h(n)-1)}{\tilde{k}(h(n))} = \frac{k(h(n-1))}{k(h(n))} &= \frac{n! \log(1+n!)^{1+\alpha}}{(n-1)!\log(1+(n-1)!)^{1+\alpha}}\\
&= n \Big(\frac{\log(1+n!)}{\log(1+(n-1)!)}\Big)^{1+\alpha}.
\end{align*}
By Stirling's formula, we know that $\log(n!)$ behaves for large $n$ like $n \log(n)$. Therefore, we conclude from the above calculation that \eqref{Kern Bedingung Wachsum} is not satisfied for the kernel $\tilde{k}$. Besides that, the kernel $\tilde{k}$ is integrable but the mild condition \eqref{conditiondelta} is not satisfied. Indeed,
for $N \geq 2$ and $\delta \in[0,1)$ we have that
\begin{align*}
\sum\limits_{j =1}^{h(N)-1} \tilde{k}(j)^{1-\delta} &= \sum\limits_{n =1}^N \big( h(n)-h(n-1))\, k(h(n-1))^{1-\delta} = \sum\limits_{n=1}^N (n-1)!\, (n-1)\, k((n-1)!)^{1-\delta}\\
&= \sum\limits_{n=1}^N \frac{((n-1)!)^\delta (n-1)}{\log(1+(n-1)!)^{(1+\alpha)(1-\delta)}}.
\end{align*} 
Employing again the asymptotic behaviour from Stirling's formula, we conclude that the denominator in the latter sum behaves for large $n$ like $(n-1)^{(1+\alpha)(1-\delta)} \log (n)^{(1+\alpha)(1-\delta)}$. Since $\alpha \geq 1$ this sum converges as $N \to \infty$ if $\delta=0$. In the case of $\delta \in (0,1)$ the sum apparently does not converge. We now show that even \eqref{conditionlog} is violated for $\alpha \in [1,2)$. One can employ similar arguments to observe that \eqref{conditionlog} holds true if $\alpha\ge 2$. We have
\begin{align*}
\sum\limits_{j=1}^{h(N)-1} \tilde{k}(j) \log\Big( 2 + \frac{1}{\tilde{k}(j)}\Big) &= \sum\limits_{n=1}^N (n-1)! (n-1)k(h(n-1)) \log\Big( 2 + \frac{1}{k(h(n-1))}\Big)\\
&\geq \sum\limits_{n=1}^N \frac{(n-1)\log\big( (n-1)! (\log(1+(n-1)!)^{1+\alpha}\big)}{(\log(1+(n-1)!))^{1+\alpha}}\\
&\geq \sum\limits_{n=1}^N \frac{(n-1)\log\big( (n-1)!\big)}{(\log(1+(n-1)!))^{1+\alpha}}.
\end{align*}
Employing again Stirling's formula, the summands behave for large $n$ like
\begin{align*}
\frac{1}{(n-1)^{\alpha-1} (\log(n-1))^\alpha}
\end{align*}
and hence the sum tends to $\infty$ as $N\to \infty$ if $\alpha < 2$. To conclude, the range of $\alpha \in [1,2)$ leads to examples which neither satisfy \eqref{Kern Bedingung Wachsum} nor fit into the setting of Section \ref{Chapter3}.
\end{bemerkungen}
As a corollary of its proof, we can generalize the statement of Theorem \ref{Satz alphates Moment} to kernels that are comparable with kernels that meet the assumptions of Theorem \ref{Satz alphates Moment}.
\begin{korollar}
\label{KorollarMonotonieKern}
Let $k, \alpha, \gamma$ and $F$ be as in Theorem \ref{Satz alphates Moment}. Consider another symmetric kernel $\tilde{k}: \mathbb{Z} \to [0,\infty)$ such that there exist constants $m_1, m_2 > 0$ with 
\begin{equation}
m_1 k(j) \leq \tilde{k} (j) \leq m_2 k(j)  
\label{BedingungKorollarMonotonieKern}
\end{equation}
for any $j \in \N$. Let $C_\alpha(k)$ be given by \eqref{Kern Bedingung Moment}  and define
\begin{equation*}
C_\alpha(\tilde{k})= \sum\limits_{j \in \N} \tilde{k}(j)j^\alpha.
\end{equation*}
Then the operator $\tilde{L}$ generated by $\tilde{k}$ satisfies $CD_\Upsilon (0,\tilde{F})$ with the $CD$-function $\tilde{F} = \frac{1}{\tilde{d}} F$, where $\tilde{d} = \frac{C_\alpha(\tilde{k})^{\gamma-1}m_2}{C_\alpha(k)^{\gamma-1}m_1^2}$.
\end{korollar}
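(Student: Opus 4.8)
The plan is to revisit the proof of Theorem~\ref{Satz alphates Moment} and to exploit that its four steps split into two essentially independent parts. Step~1 (the Jensen estimate) uses only the symmetry of the symmetrization $w$ from \eqref{w(j)}, the convexity of $F_\gamma$, the finiteness of the $\alpha$-th moment, and the identity $Lw(0)=Lu(0)$ — crucially, it makes \emph{no} use of any monotonicity of the kernel. Steps~2--4, by contrast, are precisely where the non-increase of $k$ and the growth condition \eqref{Kern Bedingung Wachsum} enter; they provide a lower bound for $\Psi_{2,\Upsilon}$ in terms of the weighted sum $\sum_{j\ge 1}\frac{k(j)}{j^\gamma}F_\gamma(w(j))$. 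Since $w$ depends only on $u$ and not on the kernel, one may run Step~1 with the kernel $\tilde k$ and Steps~2--4 with the kernel $k$, and then glue the two estimates together along this common weighted sum (formed with $k$).

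Concretely, fix $u\in\ell^\infty(\mathbb{Z})$. Since $\tilde k$ is a symmetric integrable kernel, $\tilde L$ is an operator of the form \eqref{eq:operator}, and Lemma~\ref{Lemma u(0)=0} (applied to $\tilde L$) lets us assume $x=0$ and $u(0)=0$; let $w$ be as in \eqref{w(j)}. Note that $C_\alpha(\tilde k)<\infty$ by \eqref{BedingungKorollarMonotonieKern}, but that $\tilde k$ need not be monotone, so Theorem~\ref{Satz alphates Moment} does not apply to $\tilde k$ directly. Carrying out Step~1 of the proof of Theorem~\ref{Satz alphates Moment} verbatim with $\tilde k$ in place of $k$, and then using $\tilde k(j)\le m_2 k(j)$ from \eqref{BedingungKorollarMonotonieKern}, gives
\begin{equation*}
F_\gamma\big(\tilde L u(0)\big)\le 2^\gamma C_\alpha(\tilde k)^{\gamma-1}\sum_{j=1}^\infty \frac{\tilde k(j)}{j^\gamma}F_\gamma\big(w(j)\big)\le 2^\gamma m_2\,C_\alpha(\tilde k)^{\gamma-1}\sum_{j=1}^\infty \frac{k(j)}{j^\gamma}F_\gamma\big(w(j)\big).
\end{equation*}
On the other hand, Steps~2 and~3 of that proof — which are legitimate for $k$, since $k$ satisfies all the hypotheses of Theorem~\ref{Satz alphates Moment} — together with the elementary lower bound $\Psi_{2,\Upsilon}(u)(0)\ge \tfrac{k(1)^2}{\nu}F_\gamma(w(1))$ established at the beginning of Step~2, yield $\sum_{j=1}^\infty \frac{k(j)}{j^\gamma}F_\gamma(w(j))\le \frac{d}{2^\gamma C_\alpha(k)^{\gamma-1}}\,\Psi_{2,\Upsilon}(u)(0)$ (this is exactly the content of Step~4 with the factor $2^\gamma C_\alpha(k)^{\gamma-1}$ coming from Step~1 stripped off). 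Combining the two estimates yields
\begin{equation*}
F_\gamma\big(\tilde L u(0)\big)\le \frac{m_2\,C_\alpha(\tilde k)^{\gamma-1}}{C_\alpha(k)^{\gamma-1}}\,d\;\Psi_{2,\Upsilon}(u)(0).
\end{equation*}

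Finally I would replace $\Psi_{2,\Upsilon}$ by $\tilde\Psi_{2,\Upsilon}$ using the representation formula \eqref{Psi_2H Alternativ}: at $x=0$, $u(0)=0$ it writes both $\Psi_{2,\Upsilon}(u)(0)$ and $\tilde\Psi_{2,\Upsilon}(u)(0)$ as double sums over $j,l$ of the \emph{nonnegative} terms $k(j)k(l)e^{u(l)}\Upsilon(\cdots)$ resp.\ $\tilde k(j)\tilde k(l)e^{u(l)}\Upsilon(\cdots)$, and since $\tilde k(j)\tilde k(l)\ge m_1^2 k(j)k(l)$ for all $j,l\in\mathbb{Z}$ (by \eqref{BedingungKorollarMonotonieKern}, symmetry and $k(0)=\tilde k(0)=0$), we get $\tilde\Psi_{2,\Upsilon}(u)(0)\ge m_1^2\,\Psi_{2,\Upsilon}(u)(0)$. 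Plugging this into the last display and recalling $\tilde d=\frac{C_\alpha(\tilde k)^{\gamma-1}m_2}{C_\alpha(k)^{\gamma-1}m_1^2}$ together with $F=\frac1d F_\gamma\vert_{[0,\infty)}$ gives $F_\gamma(\tilde L u(0))\le \tilde d\,d\,\tilde\Psi_{2,\Upsilon}(u)(0)$, i.e.\ $\tilde\Psi_{2,\Upsilon}(u)(0)\ge \frac{1}{\tilde d}F(-\tilde L u(0))$ whenever $-\tilde L u(0)\ge 0$. By Lemma~\ref{Lemma u(0)=0} the same inequality then holds at every $x\in\mathbb{Z}$, so $\tilde L$ satisfies $CD_\Upsilon(0,\tilde F)$ with $\tilde F=\frac1{\tilde d}F$. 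The argument is essentially just careful bookkeeping of constants; the one point that must be verified with care — and the only genuinely load-bearing observation — is that Step~1 of the proof of Theorem~\ref{Satz alphates Moment} is free of monotonicity assumptions, so that it may legitimately be applied to the possibly non-monotone kernel $\tilde k$, while the monotonicity-dependent Steps~2--4 are invoked only for the well-behaved kernel $k$.
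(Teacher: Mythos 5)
Your proposal is correct and follows essentially the same route as the paper's proof: run the monotonicity-free Step 1 (Jensen) with $\tilde{k}$, pass to $k$ via $\tilde{k}\le m_2 k$, bound the resulting weighted sum by $\Psi_{2,\Upsilon}(u)(0)$ using Steps 2--4 for $k$, and finally compare $\Psi_{2,\Upsilon}$ with $\tilde{\Psi}_{2,\Upsilon}$ through the factor $m_1^2$, yielding exactly the constant $\tilde{d}$. The bookkeeping of constants matches the paper's argument.
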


\begin{proof}
First, note that \eqref{BedingungKorollarMonotonieKern} implies that $C_\alpha(\tilde{k})<\infty$. 
Recalling the strategy of the proof of Theorem \ref{Satz alphates Moment}, we argue as in {\em Step 1} to obtain
\begin{align*}
F_\gamma\big(\tilde{L}u(0)\big) \leq 2^\gamma C_\alpha(\tilde{k})^{\gamma-1} \sum\limits_{j=1}^\infty \frac{\tilde{k}(j)}{j^\gamma}F_\gamma\big(w(j)\big) \leq 2^\gamma C_\alpha(\tilde{k})^{\gamma-1} m_2 \sum\limits_{j=1}^\infty \frac{k(j)}{j^\gamma}F_\gamma\big(w(j)\big),
\end{align*}
where $u \in \ell^\infty(\Z)$ and $w$ denotes the corresponding symmetrization.
Now, proceeding as in the proof of Theorem \ref{Satz alphates Moment}, we know that 
\begin{equation*}
2^\gamma \sum\limits_{j=1}^\infty \frac{k(j)}{j^\gamma}F_\gamma\big(w(j)\big) \leq \frac{d}{C_\alpha(k)^{\gamma-1}}\, \Psi_{2,\Upsilon}(u)(0),
\end{equation*}
where $d$ is as in Theorem \ref{Satz alphates Moment}.
Clearly, \eqref{BedingungKorollarMonotonieKern} yields $\Psi_{2,\Upsilon}(u)(0) \leq \frac{1}{m_1^2}\tilde{\Psi}_{2,\Upsilon}(u)(0)$, where $\tilde{\Psi}_{2,\Upsilon}$ denotes the corresponding operator for $\tilde{L}$. Hence, we end up with
\begin{align*}
F_\gamma\big(\tilde{L}u(0)\big) \leq \frac{C_\alpha(\tilde{k})^{\gamma-1} m_2 d}{C_\alpha(k)^{\gamma-1}m_1^2}\, \tilde{\Psi}_{2,\Upsilon}(u)(0)
\end{align*}
and the claim follows.
\end{proof}

Due to its particular importance, we provide a formulation of Theorem \ref{Satz alphates Moment} for the specific example of the operator $L_\beta$. Other important operators to which Theorem \ref{Satz alphates Moment} applies, too, include, for instance, 
those whose associated kernels are non-increasing and decay exponentially.
\begin{korollar}
\label{Korollar Potenzkern}
Let $\beta \in (1,\infty)$, $\alpha \in (1,2]$ such that $\alpha<\beta$ and set $\gamma = \frac{\alpha}{\alpha-1}$. Then there exists a constant $d>0$ such that the operator $L_\beta$ satisfies $CD_\Upsilon (0,F)$ with the $CD$-function $F = \frac{1}{d} F_\gamma \vert_{[0,\infty)}$.
\end{korollar}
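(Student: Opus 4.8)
The plan is to verify that the power-type kernel $k_\beta$ from \eqref{powerkernel} satisfies every hypothesis of Theorem \ref{Satz alphates Moment} for the prescribed $\alpha$ and $\gamma$, and then simply to invoke that theorem. No new ideas are needed; the corollary is essentially a matter of checking three elementary properties of $k_\beta$.

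First I would confirm the moment condition \eqref{Kern Bedingung Moment}. Since $k_\beta(j)\,j^\alpha = j^{\alpha-1-\beta}$, the series $C_\alpha(k_\beta) = \sum_{j \in \N} j^{\alpha-1-\beta}$ converges precisely when $\alpha - 1 - \beta < -1$, that is, when $\alpha < \beta$; this is exactly our assumption. Second, the map $j \mapsto j^{-(1+\beta)}$ is strictly decreasing on $\N$, so $k_\beta$ is non-increasing there. Third, the decay-control condition \eqref{Kern Bedingung Wachsum} requires a constant $C \ge 1$ with $k_\beta(j) \le C\,k_\beta(j+1)$ for all $j \in \N$, which is equivalent to $\big(\tfrac{j+1}{j}\big)^{1+\beta} \le C$; since $\tfrac{j+1}{j} \le 2$ for every $j \ge 1$, the choice $C = 2^{1+\beta}$ does the job. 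It is worth noting that the hypothesis $\beta > 1$ is used only to guarantee that the admissible parameter set $(1,2] \cap (1,\beta)$ is nonempty, so that a permissible $\alpha$ exists at all; for $\beta \le 1$ one must instead fall back on the weaker Theorem \ref{basicCDresult} and Corollary \ref{powertypeCDfrombasicestimate}.

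With these three facts established, Theorem \ref{Satz alphates Moment} applies verbatim with the given $\alpha \in (1,2]$ and $\gamma = \tfrac{\alpha}{\alpha-1}$, yielding a finite $d > 0$ such that $L_\beta$ satisfies $CD_\Upsilon(0,F)$ with $F = \tfrac{1}{d} F_\gamma\vert_{[0,\infty)}$. I do not expect any genuine obstacle: all the delicate case analysis lives in the proof of Theorem \ref{Satz alphates Moment}, and the present statement is just the observation that $k_\beta$ is a legitimate instance of it. One could alternatively route the argument through Corollary \ref{KorollarMonotonieKern}, but since $k_\beta$ itself meets all the hypotheses of Theorem \ref{Satz alphates Moment}, the direct verification above is the shortest path.
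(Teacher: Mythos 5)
Your proposal is correct and coincides with the paper's own proof: both verify that $k_\beta$ is non-increasing on $\N$, has finite $\alpha$-th moment precisely because $\alpha<\beta$, and satisfies \eqref{Kern Bedingung Wachsum} with $C=2^{1+\beta}$, and then apply Theorem \ref{Satz alphates Moment}. No gaps.
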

\begin{proof}
Clearly, the kernel $k_\beta$ is non-increasing on $\mathbb{N}$ and has finite $\alpha$-th moment as $\alpha<\beta$. Furthermore, we have $k_\beta (j)\le 2^{1+\beta} k_\beta (j+1)$ for every $j\in \iN$, i.e.\ condition \eqref{Kern Bedingung Wachsum} is satisfied.
The claim now follows from Theorem \ref{Satz alphates Moment}.
\end{proof}

Regarding the operator $L_\beta$ with $\beta >1$, Corollary \ref{Korollar Potenzkern} yields an alternative $CD$-function to the one we obtained in Corollary \ref{powertypeCDfrombasicestimate}. Hence, it is natural to ask if this $CD$-function yields an improvement compared to the result of Corollary \ref{powertypeCDfrombasicestimate}. As the $CD$-function from Corollary \ref{powertypeCDfrombasicestimate} behaves better for large arguments, we only need to focus on the behaviour for small arguments. If $\beta>2$, we can choose $\alpha=2$ in Corollary \ref{Korollar Potenzkern}, which yields a quadratic behaviour of the corresponding $CD$-function near zero. This is best possible and, besides that, unreachable for the $CD$-function from Corollary \ref{powertypeCDfrombasicestimate} since $\frac{1+2\beta}{\beta}>2$ for any $\beta >0$. In order to compare our two candidates for $\beta \in (1,2]$, we need to determine those  $\beta>1$ for which one has
\begin{align}
\frac{1+2\beta}{\beta} < \frac{\beta}{\beta-1}.
\label{NotwBedingungBeta}
\end{align}
This inequality is equivalent to $\beta^2-\beta-1 <0$, which yields that \eqref{NotwBedingungBeta} holds if and only if $\beta \in \big(1,\frac{1+\sqrt{5}}{2}\big]$. In this case the $CD$-function of Corollary \ref{powertypeCDfrombasicestimate} behaves better for small arguments. If instead $\beta>\frac{1+\sqrt{5}}{2}$, the $CD$-function from Corollary \ref{KorollarMonotonieKern} dominates for small arguments. We summarize our findings in the following result.

\begin{satz}
\label{SatzCDFunctionPotenzkern}
Let $\beta \in (0,2)$ and set $\beta_* = \frac{1+\sqrt{5}}{2}$. Then there exist constants $c_i >0 \ (i=1,\dots,4)$ and some $\nu >0$ such that $L_\beta$ satisfies $CD_\Upsilon (0,F)$ with a $CD$-function $F$ that behaves like
\begin{equation}  \label{asympCDFunctionLessGolden}
F(x) \sim c_1 x^\frac{1+2\beta}{\beta} \text{ as } x \to 0 \,\,  
 \mbox{and}\,\, F(x)\sim c_2 e^{\nu x} \text{ as } x\to \infty
\end{equation}
if $\beta \in (0,\beta_*]$ and like 
\begin{equation}  \label{asympCDFunctionGrGolden}
F(x) \sim c_3 x^\frac{\beta-\varepsilon}{\beta-\varepsilon-1} \, \mbox{as}\;x\to 0 \,\,  
 \mbox{and}\,\, F(x)\sim c_4 e^{\nu x}\, \mbox{as}\;x\to \infty,
\end{equation}
if $\beta \in (\beta_*,2)$. Here, $\varepsilon \in (0,\beta-1)$ can be chosen arbitrarily small.
\end{satz}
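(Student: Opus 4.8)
The plan is to synthesize the theorem from two facts already established for $L_\beta$ together with the explicit construction in \eqref{eq:Fhat}. Write $\nu:=1/|k_\beta|_1$. By Corollary \ref{powertypeCDfrombasicestimate}, for every $\beta>0$ the operator $L_\beta$ satisfies $CD_\Upsilon(0,G_\beta)$ with a $CD$-function obeying $G_\beta(r)\sim\frac{|k_\beta|_1}{2}e^{\nu(r-M(k_\beta))}$ as $r\to\infty$ and $G_\beta(r)\ge c\,r^{(1+2\beta)/\beta}$ for all $r\ge 0$, with some $c>0$. When $\beta\in(1,2)$, Corollary \ref{Korollar Potenzkern} additionally provides, for any $\varepsilon\in(0,\beta-1)$ and $\gamma:=\frac{\beta-\varepsilon}{\beta-\varepsilon-1}>2$, a $CD$-function $\frac1d F_\gamma|_{[0,\infty)}$ such that $CD_\Upsilon(0,\frac1d F_\gamma|_{[0,\infty)})$ holds. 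First I would note that $L_\beta$ then satisfies $CD_\Upsilon(0,F^*)$ with $F^*:=\max\{G_\beta,\frac1d F_\gamma|_{[0,\infty)}\}$ if $\beta\in(1,2)$ and $F^*:=G_\beta$ if $\beta\in(0,1]$: indeed $\Psi_{2,\Upsilon}(u)(x)\ge\max\{G_\beta,\frac1d F_\gamma\}(-Lu(x))$ whenever $-Lu(x)\ge 0$, and the pointwise maximum of two $CD$-functions is again a $CD$-function (continuity and vanishing at $0$ are clear, $1/\max\{F_1,F_2\}\le 1/F_1$ gives integrability at $\infty$, and $\max\{F_1,F_2\}(x)/x=\max\{F_1(x)/x,F_2(x)/x\}$ is strictly increasing).

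Next I would read off the asymptotic behaviour of $F^*$. At infinity the exponential term of $G_\beta$ beats the polynomial $\frac1d r^\gamma$, so $F^*(r)\sim\frac{|k_\beta|_1}{2}e^{\nu(r-M(k_\beta))}$. Near zero, $F^*$ is bounded below by a constant times $r^{\gamma_0}$ with $\gamma_0$ the smaller of the two available exponents, and this is where the threshold $\beta_*$ enters. Comparing $\frac{1+2\beta}{\beta}$ with $\frac{\beta}{\beta-1}$ reduces to the sign of $\beta^2-\beta-1$, so $\frac{1+2\beta}{\beta}\le\frac{\beta}{\beta-1}$ precisely for $\beta\le\beta_*$; moreover $t\mapsto\frac{t}{t-1}$ is decreasing, so $\frac{\beta-\varepsilon}{\beta-\varepsilon-1}>\frac{\beta}{\beta-1}$ for every $\varepsilon>0$. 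Hence for $\beta\in(0,\beta_*]$ one always has $\frac{1+2\beta}{\beta}\le\frac{\beta}{\beta-1}<\frac{\beta-\varepsilon}{\beta-\varepsilon-1}$, and I set $\gamma_0:=\frac{1+2\beta}{\beta}$ (only $G_\beta$ is used here; note $(0,1]\subset(0,\beta_*]$, where Corollary \ref{Korollar Potenzkern} is unavailable but unneeded); for $\beta\in(\beta_*,2)$ the strict inequality $\frac{\beta}{\beta-1}<\frac{1+2\beta}{\beta}$ survives a small perturbation, so for $\varepsilon$ small enough $\frac{\beta-\varepsilon}{\beta-\varepsilon-1}<\frac{1+2\beta}{\beta}$ and I set $\gamma_0:=\frac{\beta-\varepsilon}{\beta-\varepsilon-1}$. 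In both cases $\gamma_0>2$ and there is $c_0>0$ with $F^*(r)\ge c_0\, r^{\gamma_0}$ for $r\in(0,1]$.

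Finally I would pass to the explicit $CD$-function of \eqref{eq:Fhat}. Taking $M\ge 2/\nu$ and $\hat F$ as in \eqref{eq:Fhat} with $\gamma:=\gamma_0\ge 2$ and $\delta:=\nu$, the lower bound $F^*(r)\ge c_0 r^{\gamma_0}$ near $0$, the lower bound $F^*(r)\gtrsim e^{\nu r}$ at $\infty$ (from the exponential asymptotics of $G_\beta$), and the continuity and strict positivity of $F^*$ on compact subintervals of $(0,\infty)$ together produce a small $c>0$ with $c\hat F\le F^*$ on $[0,\infty)$; hence $L_\beta$ satisfies $CD_\Upsilon(0,F)$ with $F:=c\hat F$ (and, enlarging $M$ if needed, $F$ can be taken convex, as in the discussion preceding Lemma \ref{LemmaRelaxFunc}). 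Since $F(x)=c\,e^{\nu M}x^{\gamma_0}$ on $[0,M]$ and $F(x)=c\,M^{\gamma_0}e^{\nu x}$ for $x>M$, we get $F(x)\sim c\,e^{\nu M}x^{\gamma_0}$ as $x\to 0$ and $F(x)\sim c\,M^{\gamma_0}e^{\nu x}$ as $x\to\infty$; inserting the two values of $\gamma_0$ yields \eqref{asympCDFunctionLessGolden} for $\beta\in(0,\beta_*]$ and \eqref{asympCDFunctionGrGolden} for $\beta\in(\beta_*,2)$. I do not expect a genuine difficulty here — this is essentially a packaging result — and the only delicate point is the bookkeeping of the exponents near zero, in particular checking that for $\beta>\beta_*$ one may indeed take $\varepsilon>0$ (not $\varepsilon=0$) in Corollary \ref{Korollar Potenzkern}, which works because the governing inequality is strict there.
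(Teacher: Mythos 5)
Your proposal is correct and follows essentially the same route as the paper: it combines Corollary \ref{powertypeCDfrombasicestimate} (exponential behaviour at infinity and exponent $\frac{1+2\beta}{\beta}$ near zero) with Corollary \ref{Korollar Potenzkern} (exponent $\frac{\beta-\varepsilon}{\beta-\varepsilon-1}$ near zero for $\beta>1$), with the threshold $\beta_*$ arising exactly as in the paper from $\frac{1+2\beta}{\beta}<\frac{\beta}{\beta-1}\Leftrightarrow \beta^2-\beta-1<0$. Your explicit packaging via the maximum of the two $CD$-functions and the domination $c\hat F\le F^*$ with $\hat F$ from \eqref{eq:Fhat} merely spells out the combination of asymptotics at $0$ and at $\infty$ that the paper treats as straightforward in the discussion preceding Lemma \ref{LemmaRelaxFunc}.
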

Combining Theorem \ref{SatzCDFunctionPotenzkern} with Corollary \ref{KorollarMonotonieKern} and
having in mind that \eqref{Zusammenhang Kerne} holds, we deduce a corresponding result for fractional powers of the discrete Laplacian in one dimension.

\begin{korollar}
\label{KorollarCDFunktionFrakLaplace}
Let $\beta \in (0,2)$.
There exist constants $\tilde{c}_i >0 \ (i=1,\dots,4)$ and some $\nu >0$ such that 
the fractional discrete Laplacian in one dimension $-\big(-\Delta\big)^\frac{\beta}{2}$ satisfies $CD_\Upsilon(0,F)$ with a $CD$-function $F$ that behaves like
\begin{equation*}
F(x) \sim \tilde{c}_1 x^\frac{1+2\beta}{\beta} \, \mbox{as}\;x\to 0 \,\,  
 \mbox{and}\,\, F(x)\sim \tilde{c}_2  e^{\nu x}\, \mbox{as}\;x\to \infty
\end{equation*}
if $\beta \in (0,\beta_*]$ and like
\begin{equation*} 
F(x) \sim \tilde{c}_3 x^\frac{\beta-\varepsilon}{\beta-\varepsilon-1} \, \mbox{as}\;x\to 0 \,\,  
 \mbox{and}\,\, F(x)\sim \tilde{c}_4 e^{\nu x}\, \mbox{as}\;x\to \infty
\end{equation*}
if $\beta \in (\beta_*,2)$. Here, $\varepsilon \in (0,\beta-1)$ can again be chosen arbitrarily small.
\end{korollar}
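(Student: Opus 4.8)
The strategy is to transfer Theorem \ref{SatzCDFunctionPotenzkern} from the power-type kernel $k_\beta$ to the kernel $k_\beta^*$ of $-(-\Delta)^{\frac{\beta}{2}}$ (see \eqref{FrakLaplacekernel}), using that by \eqref{Zusammenhang Kerne} the two kernels are comparable, $c(\beta)k_\beta(j)\le k_\beta^*(j)\le C(\beta)k_\beta(j)$ for all $j\in\Z$. First I would note that $k_\beta^*$ inherits condition \eqref{conditionlog} from $k_\beta$: since $k_\beta$ has finite $\alpha$-th moment for every $\alpha<\beta$ it satisfies the stronger condition \eqref{conditiondelta}, and the sum $\sum_j k_\beta^*(j)\log\big(2+\tfrac{1}{k_\beta^*(j)}\big)$ is finite because $t\mapsto t\log(2+\tfrac1t)$ is increasing near $0$ and $\sum_j j^{-(1+\beta)}\log j<\infty$. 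Hence Theorem \ref{CDresultLagrange} applies to $k_\beta^*$, so $-(-\Delta)^{\frac{\beta}{2}}$ satisfies $CD_\Upsilon(0,G^*)$ with a strictly convex $CD$-function $G^*$ such that $G^*(r)\sim\tfrac{|k_\beta^*|_1}{2}e^{(r-M(k_\beta^*))/|k_\beta^*|_1}$ as $r\to\infty$; put $\nu:=1/|k_\beta^*|_1$.

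Next I would determine the behaviour of $G^*$ near $0$ by repeating, for the kernel $k_\beta^*$, the computation of Section \ref{Chapter3} that led to Corollary \ref{powertypeCDfrombasicestimate}. Only the two-sided bound $k_\beta^*(j)\asymp j^{-(1+\beta)}$ enters there, together with Lemma \ref{rhoproperties}: the set $\{j:k_\beta^*(j)\ge\lambda\}$ has cardinality of order $\lambda^{-1/(1+\beta)}$, so $\bar\rho_2(\lambda)\asymp\lambda^{\beta/(1+\beta)}$, and, using that $t\mapsto t\log(1+\tfrac\lambda t)$ is increasing in $t$ and an integral comparison, also $\bar\rho_1(\lambda)\asymp\lambda^{\beta/(1+\beta)}$ as $\lambda\to0^+$; by \eqref{rhopropest} this gives $\rho(\lambda)\asymp\lambda^{\beta/(1+\beta)}$, hence $\rho^{-1}(a)\asymp a^{(1+\beta)/\beta}$ and $G^*(a)=\int_0^a\rho^{-1}(s)\,\mathrm ds\gtrsim a^{(1+2\beta)/\beta}$ for small $a$, hence (adjusting the constant and using continuity and positivity of $G^*$) $G^*(r)\gtrsim r^{(1+2\beta)/\beta}$ for all $r\ge0$. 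For $\beta\in(0,\beta_*]$ this already suffices: applying the construction explained in the paragraph preceding \eqref{eq:Fhat} with $\gamma=(1+2\beta)/\beta\ge2$ and $\delta=\nu$ — that is, taking $F=c\hat F$ with $M\ge\tfrac2\nu$ large and $c>0$ small enough that $c\hat F\le G^*$ on $[0,\infty)$, which is guaranteed by the lower bound on $G^*$ near $0$ and its exponential asymptotics at $\infty$ — produces a convex $CD$-function with $F(x)\sim\tilde c_1 x^{(1+2\beta)/\beta}$ as $x\to0$ and $F(x)\sim\tilde c_2 e^{\nu x}$ as $x\to\infty$.

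For $\beta\in(\beta_*,2)$ I would instead obtain the sharper exponent near $0$ from Corollary \ref{KorollarMonotonieKern}, applied with $k=k_\beta$ and $\tilde k=k_\beta^*$: the kernel $k_\beta$ meets the hypotheses of Theorem \ref{Satz alphates Moment} with $\alpha=\beta-\varepsilon\in(1,2]$ for any $\varepsilon\in(0,\beta-1)$ (it is non-increasing on $\N$, has finite $\alpha$-th moment since $\alpha<\beta$, and $k_\beta(j)\le2^{1+\beta}k_\beta(j+1)$, cf.\ the proof of Corollary \ref{Korollar Potenzkern}), so $\gamma=\alpha/(\alpha-1)=(\beta-\varepsilon)/(\beta-\varepsilon-1)\ge2$, while $k_\beta^*$ is symmetric and, by \eqref{Zusammenhang Kerne}, comparable to $k_\beta$ on $\N$, so that \eqref{BedingungKorollarMonotonieKern} holds. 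Thus $-(-\Delta)^{\frac{\beta}{2}}$ satisfies $CD_\Upsilon(0,\tilde F_\beta)$ with $\tilde F_\beta=\tfrac1{\tilde d}F_\gamma|_{[0,\infty)}$, i.e.\ $\tilde F_\beta(x)=\tfrac1{\tilde d}x^{(\beta-\varepsilon)/(\beta-\varepsilon-1)}$. Since $-(-\Delta)^{\frac{\beta}{2}}$ satisfies $CD_\Upsilon(0,\tilde F_\beta)$ and $CD_\Upsilon(0,G^*)$ simultaneously, it satisfies $CD_\Upsilon(0,\max(\tilde F_\beta,G^*))$, and applying the same construction once more — now with $\gamma=(\beta-\varepsilon)/(\beta-\varepsilon-1)$, $\delta=\nu$, and with $c\hat F\le\max(\tilde F_\beta,G^*)$ verified using $\tilde F_\beta(x)=\tfrac1{\tilde d}x^\gamma$ on $[0,M]$ and $G^*(x)\sim\tilde c_4 e^{\nu x}$ at $\infty$ — yields the remaining claim; that $-(-\Delta)^{\frac{\beta}{2}}$ is precisely the operator generated by $k_\beta^*$ closes the argument.

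The only genuine work is the near-zero analysis of $G^*$, i.e.\ re-running the $\rho$-estimates of Section \ref{Chapter3} for $k_\beta^*$ in place of $k_\beta$; but since those estimates depend only on $k_\beta^*(j)\asymp j^{-(1+\beta)}$ and on the bounds \eqref{rhopropest}, no essentially new difficulty arises, and everything else reduces to direct applications of Theorem \ref{CDresultLagrange}, Corollary \ref{KorollarMonotonieKern}, and the combination-of-asymptotics device already used in the proof of Theorem \ref{SatzCDFunctionPotenzkern}.
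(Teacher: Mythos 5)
Your proposal is correct and follows essentially the same route as the paper, which obtains the corollary by combining Theorem \ref{SatzCDFunctionPotenzkern} (i.e.\ the Section \ref{Chapter3} machinery) with Corollary \ref{KorollarMonotonieKern} and the comparability \eqref{Zusammenhang Kerne}, together with the $\hat F$-device of \eqref{eq:Fhat} to merge the asymptotics. The only difference is that you explicitly re-run the $\rho$-estimates of Section \ref{Chapter3} for $k_\beta^*$ (a step the paper leaves implicit behind the reference to \eqref{Zusammenhang Kerne}), which is a sensible way to make the transfer of the near-zero exponent $\frac{1+2\beta}{\beta}$ and of the exponential tail rigorous.
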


\theoremstyle{remark}
\begin{bemerkung}
\label{BemerkungFrakLaplace}
It is also possible to directly invoke Theorem \ref{Satz alphates Moment} in order to obtain the statement of Corollary \ref{KorollarMonotonieKern} in the regime where $\beta\in (\beta_*,2)$. While the existence of the corresponding $\alpha$-th moment can be deduced from the similarity to the respective power-type kernel, one can verify \eqref{Kern Bedingung Wachsum} by a simple application of the fundamental identity of the Gamma function using the representation formula \eqref{FrakLaplacekernel}. The fact that $k_\beta^*$ is non-increasing on $\N$ follows from a straightforward calculation, see Example \ref{HarnackExamples}(ii). 
\end{bemerkung}


\section{Li-Yau inequality}\label{sec:LiYau}
The aim of this section is to prove a Li-Yau type inequality for an operator $L$ of the form (\ref{eq:operator}) satisfying $CD_\Upsilon (0,F)$ with a certain $CD$-function $F$. For functions $u$ satisfying this Li-Yau estimate we will also be able to prove a Harnack inequality in Section \ref{Chapter6}.

Again we will have a special focus on the operators $L_\beta$ and the fractional discrete Laplacian in one dimension $-(-\Delta)^\frac{\beta}{2}$ with $\beta \in (0,2)$, for which we already know from the previous section that a certain $CD_\Upsilon (0,F)$ condition is satisfied. As the power-type kernel has unbounded support, we will need a suitable weight function which we define in the following lemma. Note that for any $\alpha \in (0,1]$ there holds 
\begin{equation}
\big \vert \vert x \vert^\alpha -\vert y \vert ^\alpha \big \vert \leq \vert x-y \vert^\alpha, \ x,y \in \mathbb{R}.
\label{UnglAlpha}
\end{equation}
This follows from the triangle inequality and the inequality $(a+b)^\alpha \leq a^\alpha + b^\alpha, \ a,b\ge 0, \alpha \in (0,1]$.

\begin{lemma}
\label{Lemma2LiYau}
For $\alpha \in (0,1]$ and $R \in \mathbb{N}$ let the function $\psi_R: \mathbb{Z} \to (0,1]$ be defined by
\begin{align*}
\psi_R(x) := \begin{cases}
        \begin{aligned}
          & \Big(\frac{R}{\vert x \vert}\Big)^\alpha, \text{ if } \vert x \vert \geq R, \\
          & 1, \text{ if } \vert x \vert < R.\\
        \end{aligned}
\end{cases}
\end{align*}
Then there holds
\begin{equation*}
\frac{\vert \psi_R (x) - \psi_R(y) \vert}{\psi_R (x)} \leq \frac{1}{R^\alpha} \vert x-y \vert^\alpha, \ x,y \in \mathbb{Z}.
\end{equation*}
\end{lemma}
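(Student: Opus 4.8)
The plan is to reduce the asserted inequality to the elementary estimate \eqref{UnglAlpha} by rewriting $\psi_R$ in terms of the auxiliary function $\rho:\mathbb{Z}\to[R,\infty)$ given by $\rho(x)=\max\{R,|x|\}$. Since $R\ge 1$, one checks directly from the definition that $\psi_R(x)=\big(R/\rho(x)\big)^\alpha=R^\alpha\rho(x)^{-\alpha}$ holds at \emph{every} lattice point $x\in\mathbb{Z}$: for $|x|<R$ we have $\rho(x)=R$, so the right-hand side equals $1$; for $|x|\ge R$ we have $\rho(x)=|x|$; in particular there is nothing special about $x=0$ or the boundary case $|x|=R$. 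Plugging this representation into the quotient and simplifying, one obtains, for all $x,y\in\mathbb{Z}$,
\[
\frac{|\psi_R(x)-\psi_R(y)|}{\psi_R(x)}=\rho(x)^\alpha\,\big|\rho(x)^{-\alpha}-\rho(y)^{-\alpha}\big|=\frac{\big|\rho(y)^\alpha-\rho(x)^\alpha\big|}{\rho(y)^\alpha}.
\]

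Because $\rho(y)\ge R$, the claimed bound follows once one shows $\big|\rho(x)^\alpha-\rho(y)^\alpha\big|\le|x-y|^\alpha$. For this I would first apply \eqref{UnglAlpha} to the two positive reals $\rho(x)$ and $\rho(y)$, which yields $\big|\rho(x)^\alpha-\rho(y)^\alpha\big|\le|\rho(x)-\rho(y)|^\alpha$. It then remains to note that $\rho$ is $1$-Lipschitz: both $t\mapsto|t|$ and $t\mapsto\max\{R,t\}$ are $1$-Lipschitz on $\mathbb{R}$, hence so is their composition $x\mapsto\max\{R,|x|\}$, and therefore $|\rho(x)-\rho(y)|\le\big||x|-|y|\big|\le|x-y|$. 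Combining the last two estimates gives $\big|\rho(x)^\alpha-\rho(y)^\alpha\big|\le|x-y|^\alpha$, and dividing by $\rho(y)^\alpha\ge R^\alpha$ completes the argument.

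I do not expect any genuine obstacle here. The only points requiring a modicum of care are the algebraic bookkeeping in the first display and the verification that the compact representation $\psi_R=(R/\rho)^\alpha$ is valid uniformly on $\mathbb{Z}$. The two substantive ingredients---the sub-additivity inequality \eqref{UnglAlpha}, already recorded just above the lemma, and the $1$-Lipschitz property of the truncated absolute value---are both entirely elementary.
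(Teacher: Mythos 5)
Your argument is correct, and it reaches the conclusion by a genuinely more unified route than the paper. The paper proves the lemma by splitting into the four cases $|x|,|y|\ge R$; $|x|,|y|<R$; $|x|\ge R,|y|<R$; $|x|<R,|y|\ge R$, and in each case invokes \eqref{UnglAlpha} directly for $|x|^\alpha,|y|^\alpha$ (using in the mixed cases that $|x|^\alpha-R^\alpha\le |x|^\alpha-|y|^\alpha$, etc.). You instead absorb all the casework into the single observation that $\psi_R(x)=\big(R/\rho(x)\big)^\alpha$ with $\rho(x)=\max\{R,|x|\}$, note the exact identity
\[
\frac{|\psi_R(x)-\psi_R(y)|}{\psi_R(x)}=\frac{\big|\rho(x)^\alpha-\rho(y)^\alpha\big|}{\rho(y)^\alpha},
\]
and then combine \eqref{UnglAlpha} (applied to the nonnegative reals $\rho(x),\rho(y)$) with the $1$-Lipschitz property of $t\mapsto\max\{R,|t|\}$ and the bound $\rho(y)\ge R$; all the steps check out, including the algebra in the displayed identity and the validity of the representation at every lattice point. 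What your version buys is brevity and a conceptual reading of the lemma as a H\"older-type estimate for the truncated distance $\rho$; in fact it slightly sharpens the statement, since the denominator you obtain is $\rho(y)^\alpha$ rather than $R^\alpha$. What the paper's case analysis buys is that it uses nothing beyond \eqref{UnglAlpha} and the definition of $\psi_R$, at the cost of repeating essentially the same estimate four times. Either proof is perfectly adequate for the application in Theorem \ref{SatzLiYau}.
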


\begin{proof}
We distinguish several cases in which we will make use of (\ref{UnglAlpha}). 
\begin{itemize}
\item \emph{Case 1. $\vert x \vert, \vert y \vert \geq R$:} Here we may estimate as follows.
\begin{align*}
\frac{\vert \psi_R (x) - \psi_R(y) \vert}{\psi_R (x)} = \frac{\big \vert \vert y \vert^\alpha - \vert x \vert^\alpha \big \vert}{\vert y \vert^\alpha} \leq \frac{1}{R^\alpha} \big \vert \vert y \vert^\alpha - \vert x \vert^\alpha \big \vert \leq \frac{1}{R^\alpha} \vert x-y \vert^\alpha.
\end{align*}
\item \emph{Case 2. $\vert x \vert, \vert y \vert < R$:} Clear as the left hand side vanishes.
\item \emph{Case 3. $\vert x \vert \geq R, \vert y \vert < R$:} In this case there holds $\vert x \vert^\alpha - R^\alpha \leq \vert x \vert^\alpha - \vert y \vert^\alpha$ from which we deduce
\begin{align*}
\frac{\vert \psi_R (x) - \psi_R(y) \vert}{\psi_R (x)} = \frac{1}{R^\alpha} (\vert x \vert^\alpha -R^\alpha) \leq \frac{1}{R^\alpha} \big \vert \vert x \vert^\alpha - \vert y \vert^\alpha \big \vert \leq \frac{1}{R^\alpha}  \vert x-y \vert^\alpha.
\end{align*}
\item \emph{Case 4. $\vert x \vert < R, \vert y \vert \geq R$:} Analogeously to Case 3, there holds $\vert y \vert^\alpha - R^\alpha \leq \vert y \vert^\alpha - \vert x \vert^\alpha$ in this case, which gives
\begin{align*}
\frac{\vert \psi_R (x) - \psi_R(y) \vert}{\psi_R (x)} \leq \frac{1}{R^\alpha} (\vert y \vert^\alpha -R^\alpha) \leq \frac{1}{R^\alpha} \big \vert \vert x \vert^\alpha - \vert y \vert^\alpha \big \vert \leq \frac{1}{R^\alpha}  \vert x-y \vert^\alpha.
\end{align*}
\end{itemize}
\end{proof}

In the following we consider $CD$-functions $F$ such that the function $g:[0,\infty) \to [0,\infty)$ defined by
\begin{align}
g(x) = \begin{cases}
\frac{F(x)}{x}, \quad x > 0,\\
0, \quad x =0,
\end{cases}
\label{DefFunktiong}
\end{align}
satisfies the estimate
\begin{align}
g(x) +g(y) \leq g(x+ y), \quad x,y \in [0,\infty).
\label{BedinungSA}
\end{align}
Note, that this includes all $CD$-functions $F$ such that $x \mapsto x^{-2}F(x)$  is monotonically increasing on $(0,\infty)$.  Indeed, if $F$ enjoys the latter property
we may estimate as
\begin{align*}
g(x) + g(y) = x \frac{g(x)}{x} + y \frac{g(y)}{y} \leq (x+y) \frac{g(x+y)}{(x+y)} = g(x+y),\quad x,y> 0.
\end{align*}
Recall that all $CD$-functions of our interest can be bounded from below by a $CD$-function $\hat{F}$ of the form \eqref{eq:Fhat}, for which the choice of $M\geq \frac{2}{\delta}$ implies that $x \mapsto  x^{-2}\hat{F}(x)$ is monotonically increasing on $(0,\infty)$.
%


%

The main theorem of this section reads as follows.


\begin{satz}
\label{SatzLiYau}
Let $\alpha >0$ and $k$ be a kernel with finite $\alpha$-th moment. Let $L$ be the operator generated by $k$. Suppose that $L$ satisfies $CD_\Upsilon (0,F)$ with a $CD$-function $F$ such that the function $g$ from (\ref{DefFunktiong}) satisfies (\ref{BedinungSA}). Let $\varphi$ be the relaxation function associated with the function $2F$. Consider a bounded function $u: [0,\infty) \times \mathbb{Z} \to (0,\infty)$ that is $C^1$ in time and solves the equation 
\[
\partial_t u (t,x) - L u(t,x) = 0\quad\mbox{on}\; (0,\infty) \times \mathbb{Z}.
\]
Then for $v (t,x) = \log u(t,x)$ there holds $v(t,x+\cdot) \in \ell_k^1(\mathbb{Z})$ for any $(t,x) \in (0,\infty) \times \mathbb{Z}$ and
\begin{equation}
-L v (t,x) \leq \varphi (t), \ (t,x) \in (0,\infty) \times \mathbb{Z}
\label{LiYauInequ1}
\end{equation}
and thus 
\begin{equation}
\partial_t v (t,x) \geq \Psi_\Upsilon (v)(t,x) - \varphi (t), \ (t,x) \in (0,\infty) \times \mathbb{Z}.
\label{LiYauInequ2}
\end{equation}
\end{satz}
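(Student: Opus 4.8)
The plan is to reduce the two claimed inequalities to a single scalar quantity. Writing $P(t,x):=-Lv(t,x)$ and combining $\partial_t u=Lu$ with the non-local chain rule $L(\log u)=\frac{Lu}{u}-\Psi_\Upsilon(\log u)$, one gets $\partial_t v=Lv+\Psi_\Upsilon(v)=-P+\Psi_\Upsilon(v)$, so that \eqref{LiYauInequ2} is just \eqref{LiYauInequ1} rewritten; it therefore suffices to prove $P(t,x)\le\varphi(t)$. The regularity assertion $v(t,x+\cdot)\in\ell_k^1(\mathbb Z)$, needed to justify the chain rule pointwise, should follow from the boundedness of $u$, the equation, and the finite $\alpha$-th moment of $k$ by a bootstrap argument in the spirit of \cite[Lemma 2.2]{WZ}. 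A convenient preliminary reduction: since $L$ annihilates constants, $u+\varepsilon$ is again a bounded positive solution for $\varepsilon>0$, now bounded below, and $|\log(u+\varepsilon)(y)-\log(u+\varepsilon)(x)|\le|v(y)-v(x)|$, so by dominated convergence it is enough to prove the estimate under the extra assumption $0<c_0\le u\le c_1$. Under this assumption $v$ is bounded, $P=-Lv$ is bounded on $(0,\infty)\times\mathbb Z$, and the coefficients $e^{v(x+j)-v(x)}=u(x+j)/u(x)$ of the \emph{drift Laplacian}
\[
\widetilde L h(x):=\sum_{j\in\mathbb Z}k(j)\,e^{v(x+j)-v(x)}\big(h(x+j)-h(x)\big)
\]
are uniformly bounded; the assumption is removed by letting $\varepsilon\to0$ at the end.

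The key step is a differential identity for $P$. Differentiating in time, inserting $\partial_t v=-P+\Psi_\Upsilon(v)$, and unravelling the definitions of $\Psi_{2,\Upsilon}$ and $B_{\Upsilon'}$ (using $Lv=-P$ and $\Upsilon'(r)=e^r-1$), a direct computation should give
\[
\partial_t P=\widetilde L P-2\,\Psi_{2,\Upsilon}(v)\qquad\text{on }(0,\infty)\times\mathbb Z.
\]
Since $\Psi_{2,\Upsilon}(v)\ge0$ always and $\Psi_{2,\Upsilon}(v)(t,x)\ge F(P(t,x))$ whenever $P(t,x)\ge0$ by $CD_\Upsilon(0,F)$, this yields $\partial_t P\le\widetilde L P-2F_0(P)$. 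On the other hand the relaxation function $\varphi$ of $2F$, regarded as a function constant in $x$, is $\widetilde L$-harmonic and satisfies $\dot\varphi=-2F(\varphi)$ with $\varphi(0^+)=\infty$; thus $\varphi$ solves the same inequality with equality and blows up at $t=0$, and is the natural comparison object. (The factor $2$ here is exactly why the theorem invokes the relaxation function of $2F$ rather than of $F$.)

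It then remains to run a parabolic comparison argument to conclude $P\le\varphi$, the only genuine difficulty being the non-compactness of $\mathbb Z$, which is handled by the cut-off weights $\psi_R$ of Lemma \ref{Lemma2LiYau} (whose exponent is deliberately chosen to match the $\alpha$-th moment of $k$). Fix $T>0$; since $P$ is bounded and $\varphi(0^+)=\infty$ one has $P\le\varphi$ on $(0,t_0]\times\mathbb Z$ for free for some small $t_0>0$, and on $[t_0,T]$ one argues by contradiction with $A_R:=\sup_{[t_0,T]\times\mathbb Z}\psi_R(x)\big(P(t,x)-\varphi(t)\big)$, a supremum attained at some $(t_*,x_*)$ because $\psi_R$ vanishes at infinity and $P-\varphi$ is bounded above. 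If $A_R>0$, then $t_*\in(t_0,T]$, $x_*$ maximizes $x\mapsto\psi_R(x)\big(P(t_*,x)-\varphi(t_*)\big)$, and the corresponding one-sided time derivative there is nonnegative. Combining $\widetilde L\big(\psi_R(P(t_*,\cdot)-\varphi(t_*))\big)(x_*)\le0$, the commutator bound $|\psi_R\widetilde L P-\widetilde L(\psi_R P)|+\varphi\,|\widetilde L\psi_R|\le C R^{-\alpha}\sum_j k(j)|j|^\alpha$ (finite by the moment condition, using boundedness of $P$ and of the drift coefficients and Lemma \ref{Lemma2LiYau}), and the lower bound $\psi_R(x_*)\big(F(P(t_*,x_*))-F(\varphi(t_*))\big)\ge A_R\,g(\varphi(t_*))$ coming from monotonicity of $r\mapsto F(r)/r$ together with condition \eqref{BedinungSA} on $g$, one reaches an inequality of the form $0\le C R^{-\alpha}\big(\varphi(t_*)+\sup P\big)-2A_R\,g(\varphi(t_*))$. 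Since $\varphi(t_*)\in[\varphi(T),\varphi(t_0)]$, on which $g\circ\varphi$ is bounded below by a positive constant, this forces $A_R\to0$ as $R\to\infty$; as $\psi_R(x)\to1$ for each fixed $x$, this is possible only if $P\le\varphi$ on $[t_0,T]\times\mathbb Z$. Letting $T\to\infty$ gives \eqref{LiYauInequ1}, then the identity $\partial_t v=-P+\Psi_\Upsilon(v)$ gives \eqref{LiYauInequ2}, and finally $\varepsilon\to0$ removes the reduction.

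I expect the main obstacle to be precisely this last step: correctly bookkeeping the one-sided time derivative of $t\mapsto\sup_x\psi_R(x)(P-\varphi)$ at the contact point, and, above all, verifying that every error term produced by commuting $\psi_R$ past $\widetilde L$ and past $F$ is $o(1)$ as $R\to\infty$ uniformly on $[t_0,T]$. This is exactly where the reduction to $c_0\le u\le c_1$ (boundedness of $P$ and of the drift coefficients), the integrability built into the $\alpha$-th moment, and the structural hypothesis \eqref{BedinungSA} all enter in an essential way. The regularity step ($\ell_k^1$-membership) is a second, more routine, technical point.
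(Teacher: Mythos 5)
Your core argument is essentially the paper's: regularize to $v_\varepsilon=\log(u+\varepsilon)$, use the identity $\partial_t Lv_\varepsilon=2\Psi_{2,\Upsilon}(v_\varepsilon)+B_{\exp}(v_\varepsilon,Lv_\varepsilon)$ (your drift operator $\widetilde L$ is exactly the $B_{\exp}$ term), localize with the weight $\psi_R$ of Lemma \ref{Lemma2LiYau}, evaluate at a weighted maximum point, control the commutator terms by $C(\varepsilon,M)R^{-\alpha}$ via the finite $\alpha$-th moment, invoke $CD_\Upsilon(0,F)$ together with $\dot\varphi=-2F(\varphi)$, and let $R\to\infty$, then $\varepsilon\to0$. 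The only structural difference is the comparison functional: the paper maximizes the ratio $\psi_R(-Lv_\varepsilon)/\varphi$ and uses the superadditivity \eqref{BedinungSA} of $g$ to convert the $O(R^{-\alpha})$ error into an additive perturbation of the argument, whereas your difference functional $\sup_{[t_0,T]\times\mathbb{Z}}\psi_R\,(P-\varphi)$ closes with only the monotonicity of $g$, via $F(P_*)-F(\varphi_*)\ge (P_*-\varphi_*)\,g(\varphi_*)$ and $g(\varphi_*)\ge g(\varphi(T))>0$ on the compact time window; that is a legitimate (and in this respect slightly more economical) variant of the same maximum-principle scheme.

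The genuine gap is in the limit $\varepsilon\to0$ and the assertion $v(t,x+\cdot)\in\ell_k^1(\mathbb{Z})$, which is part of the theorem's conclusion and not a routine afterthought. Your reduction ``by dominated convergence it is enough to prove the estimate under $0<c_0\le u$'' is circular: to pass to the limit in $-Lv_\varepsilon\le\varphi$ the natural dominating function is $k(j)\,|v(t,x+j)-v(t,x)|$, and its summability is precisely the $\ell_k^1$-membership you still have to prove. The proposed substitute, a ``bootstrap in the spirit of \cite[Lemma 2.2]{WZ}'', is not available under the stated hypotheses: that lemma is a chain-rule statement which presupposes the integrability in question, and pointwise lower bounds for $u(t,x+j)$ extracted from the equation typically give $|v(t,x+j)|$ of the order $\log\big(1/k(j)\big)$, so summability against $k$ would require something like condition \eqref{conditionlog}, which Theorem \ref{SatzLiYau} does not assume (only a finite $\alpha$-th moment). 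The paper resolves this by extracting the integrability from the $\varepsilon$-uniform estimate itself: rearranging $-Lv_\varepsilon(t,x)\le\varphi(t)$ as in \eqref{eq:rearrangedforBeppo}, splitting $\mathbb{Z}$ into $S_1=\{j:\,u(t,x+j)<\tfrac12\}$ and its complement, applying monotone convergence to the nonnegative, increasing family $-\sum_{j\in S_1}k(j)\,v_\varepsilon(t,x+j)$ and dominated convergence on $S_2$; this yields $v(t,x+\cdot)\in\ell_k^1(\mathbb{Z})$ and $-Lv\le\varphi$ simultaneously, after which \eqref{LiYauInequ2} follows from the chain rule. Your maximum-principle core is sound; the limiting step needs to be replaced by an argument of this kind.
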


\begin{proof}
We first consider the bounded function $v_\varepsilon (t,x) := \log \big(u(t,x)+\varepsilon \big), \ \varepsilon >0,$ and show the estimate
\begin{equation}
-L v_\varepsilon (t,x) \leq \varphi (t), \ (t,x) \in (0,\infty) \times \mathbb{Z}, \ \varepsilon>0.
\label{LiYauInequEps1}
\end{equation}
Afterwards, we will investigate the limit as $\varepsilon\to 0$ in order to establish the general statement.

Let $\varepsilon>0$ and $R \in \mathbb{N}$ be arbitrarily fixed and consider the corresponding weight function $\psi_R (x)$ from Lemma \ref{Lemma2LiYau}. On $[0,\infty) \times \mathbb{Z}$ we define the function $G_\varepsilon$ by setting
\begin{equation*}
G_\varepsilon (t,x) = -\frac{L v_\varepsilon (t,x)}{\varphi (t)} \psi_R (x), \ t>0, x \in \mathbb{Z},
\end{equation*}
and $G_\varepsilon (0,x) = 0, \ x \in \mathbb{Z}$. Observe that $G_\varepsilon$ is continuous in time since $\varphi (t) \to \infty$ as $t \to 0+$ and $Lv_\varepsilon (t,x)$ is bounded. Our aim is to show that for any $(t,x) \in (0,\infty) \times \Z$ there holds 
\begin{align}
G_\varepsilon (t,x) \leq 1+ \frac{1}{\varphi (t)} h(R)
\label{ProofLiYauToShow}
\end{align}
with some function $h$ such that $h(R) \to 0$ as $R \to \infty$. Sending $R \to \infty$ then yields (\ref{LiYauInequEps1}).

We start by generalizing the two operators $\Psi_\Upsilon$ and $B_{\Upsilon'}$ introduced in Section 1.2. For a continuous function $H:\,\iR \rightarrow \iR$ and 
$w,\tilde{w} \in \ell^\infty(\mathbb{Z})$ we define (cf.\ \cite[Section 2]{WZ})
\begin{equation*}
\Psi_H(w)(x) = \sum\limits_{y \in \mathbb{Z}} k(x-y) H\big(w(y)-w(x)\big),\quad x\in \iZ,
\end{equation*}
and 
\begin{equation*}
B_{H}(w,\tilde{w})(x)= \sum\limits_{y \in \mathbb{Z}} k(x-y) H\big(w(y)-w(x)\big) 
\big( \tilde{w}(y)-\tilde{w}(x)\big),\quad x\in \iZ.
\end{equation*}

By \cite[Lemma 2.2]{WZ}, the evolution equation for $v_\varepsilon$ reads as
\begin{equation}
\partial_t v_\varepsilon - L v_\varepsilon = \Psi_\Upsilon(v_\varepsilon), \ (t,x) \in (0,\infty) \times \mathbb{Z}.
\label{GleichungLPsi}
\end{equation}
Applying $L$ to equation (\ref{GleichungLPsi}), we deduce that
\begin{equation}
\partial_t L v_\varepsilon = L \Psi_{\Upsilon^\prime} (v_\varepsilon), \ (t,x) \in (0,\infty) \times \mathbb{Z},
\label{GleichungLPsiPalmeStrich}
\end{equation}
due to the dominated convergence theorem. Observe that (see also \cite[Remark 2.9 (iii)]{WZ})
\begin{align*}
L \Psi_{\Upsilon^\prime} (v_\varepsilon) = 2 \Psi_{2,\Upsilon} (v_\varepsilon) + B_{\exp} (v_\varepsilon,L v_\varepsilon), \ (t,x) \in (0,\infty) \times \mathbb{Z}.
\end{align*}
Inserting this into (\ref{GleichungLPsiPalmeStrich}), we obtain
\begin{align*}
\partial_t L v_\varepsilon = 2 \Psi_{2,\Upsilon} (v_\varepsilon) + B_{\exp} (v_\varepsilon,L v_\varepsilon), \ (t,x) \in (0,\infty) \times \mathbb{Z},
\end{align*}
which in turn gives
\begin{align*}
\partial_t G_\varepsilon &= -\frac{\psi_R (x)}{\varphi (t)} \big(2 \Psi_{2,\Upsilon} (v_\varepsilon) + B_{\exp} (v_\varepsilon,L v_\varepsilon)\big) - \frac{\dot{\varphi} (t)}{\varphi (t)} G_\varepsilon. 
\end{align*}

Now let $t_1>0$ be arbitrarily fixed and suppose that $G_\varepsilon$ (restricted to the set $[0,t_1] \times \mathbb{Z}$) assumes its global maximum at $(t_*,x_*) \in [0,t_1] \times \mathbb{Z}$. We can assume w.l.o.g. that $G_\varepsilon (t_*,x_*) > 0$ as otherwise inequality (\ref{LiYauInequEps1}) follows by positivity of $\varphi$. Then by definition of $G_\varepsilon$ it is clear that $t_* >0$ and thus ($\partial_t G_\varepsilon)(t_*,x_*) \geq 0$. Thus, at the maximum point $(t_*,x_*)$ we get the estimate
\begin{align}
0 \leq -2 \Psi_{2,\Upsilon} (v_\varepsilon)(t_*,x_*) - B_{\exp} (v_\varepsilon,L v_\varepsilon)(t_*,x_*) - \frac{\dot{\varphi} (t_*)}{\psi_R (x_*)} G_\varepsilon (t_*,x_*).
\label{BeweisLiYau1}
\end{align}
We now consider the second term on the right-hand side of this inequality. As $u$ is bounded, by assumption, there exists an $M>0$ such that $u(j) \leq M, \ j \in \mathbb{Z}$. Using this, the maximum property of $G_\varepsilon$ at the point $(t_*,x_*)$ and Lemma \ref{Lemma2LiYau}, we find
\begin{align*}
-B_{\exp} &(v_\varepsilon,L v_\varepsilon)(t_*,x_*) = -\sum_{j \in \mathbb{Z}} k(j) e^{v_\varepsilon (t_*,x_*+j)-v_\varepsilon (t_*,x_*)}\big(L v_\varepsilon (t_*,x_*+j)-L v_\varepsilon (t_*,x_*)\big) \\ &= \sum_{j \in \mathbb{Z}} k(j) e^{v_\varepsilon (t_*,x_*+j)-v_\varepsilon (t_*,x_*)}\Big(\frac{\varphi (t_*)}{\psi_R (x_*)} \big(G_\varepsilon (t_*,x_*+j)-G_\varepsilon (t_*,x_*)\big) \\ & \hspace{6cm}+\frac{\psi_R (x_*)-\psi_R (x_*+j)}{\psi_R (x_*)} \big(-L v_\varepsilon (t_*,x_*+j)\big)\Big) \\ &\leq \sum_{j \in \mathbb{Z}} k(j) e^{v_\varepsilon (t_*,x_*+j)-v_\varepsilon (t_*,x_*)}\frac{\vert \psi_R (x_*)-\psi_R (x_*+j) \vert}{\psi_R (x_*)} \vert L v_\varepsilon (t_*,x_*+j) \vert \\ & \leq \frac{M+\varepsilon}{\varepsilon} \log \Big(\frac{M+\varepsilon}{\varepsilon}\Big) \vert k \vert_1 \frac{1}{R^\alpha} \sum_{j \in \mathbb{Z}} k(j) \vert j \vert^\alpha =: \frac{C(\varepsilon,M)}{R^\alpha}.
\end{align*}
Note that the last sum is finite since $k$ has finite $\alpha$-th moment. Using this estimate in (\ref{BeweisLiYau1}) yields
\begin{align*}
2 \Psi_{2,\Upsilon} (v_\varepsilon)(t_*,x_*) \leq - \frac{\dot{\varphi} (t_*)(-L v_\varepsilon (t_*,x_*))}{\varphi (t_*)}+ \frac{C(\varepsilon,M)}{R^\alpha}.
\end{align*}
We now apply the condition $CD_\Upsilon(0,F)$ and use the ODE for the relaxation function $\varphi$ to get
\begin{align*}
2 F\big(-L v_\varepsilon (t_*,x_*)\big) \leq \frac{2 F\big(\varphi (t_*)\big)}{\varphi (t_*)} \big(-L v_\varepsilon (t_*,x_*)\big)+ \frac{C(\varepsilon,M)}{R^\alpha}.
\end{align*}
The function $g$ from (\ref{DefFunktiong}) is invertible and therefore the previous inequality is equivalent to
\begin{align*}
g\big(-L v_\varepsilon (t_*,x_*)\big) \leq g\big(\varphi (t_*)\big)+ g\Big(g^{-1}\Big(\frac{C(\varepsilon,M)}{2 R^\alpha(-L v_\varepsilon (t_*,x_*))}\Big)\Big).
\end{align*}
Applying (\ref{BedinungSA}) now gives 
\begin{align*}
g\big(-L v_\varepsilon (t_*,x_*)\big) \leq g\Big(\varphi (t_*) + g^{-1}\Big(\frac{C(\varepsilon,M)}{2 R^\alpha (-L v_\varepsilon (t_*,x_*))}\Big)\Big).
\end{align*}
Since $F$ is a $CD$-function, $g$ is strictly increasing. We therefore find
\begin{align*}
-L v_\varepsilon (t_*,x_*) \leq \varphi (t_*) + g^{-1}\Big(\frac{C(\varepsilon,M)}{2 R^\alpha (-L v_\varepsilon (t_*,x_*))}\Big),
\end{align*}
which yields
\begin{align}
G_\varepsilon (t_*,x_*) \leq \frac{-L v_\varepsilon (t_*,x_*)}{\varphi (t_*)} &\leq 1+ \frac{1}{\varphi (t_*)} g^{-1}\Big(\frac{C(\varepsilon,M)}{2 R^\alpha (-L v_\varepsilon (t_*,x_*))}\Big) \notag \\ &\leq 1+ \frac{1}{\varphi (t_1)} g^{-1}\Big(\frac{C(\varepsilon,M)}{2 R^\alpha (-L v_\varepsilon (t_*,x_*))}\Big),
\label{LiYauBeweisFall2}
\end{align}
as $\varphi$ is strictly decreasing. We now distinguish two cases.
\begin{itemize}
\item \emph{Case 1. $-L v_\varepsilon (t_*,x_*) \leq R^{-\frac{\alpha}{2}}$:} In this case, the desired inequality (\ref{ProofLiYauToShow}) follows without the preceding calculations. As $(t_*,x_*)$ is a global maximum point of $G_\varepsilon$ restricted to the set $[0,t_1] \times \mathbb{Z}$, we have
\begin{align*}
G_\varepsilon(t_1,x) \leq G_\varepsilon (t_*,x_*) \leq \frac{\psi_R (x_*) }{\varphi (t_*) R^\frac{\alpha}{2}} \leq 1 + \frac{1}{\varphi (t_1) R^\frac{\alpha}{2}}, \quad x \in  \mathbb{Z}.
\end{align*}
\item \emph{Case 2. $-L v_\varepsilon (t_*,x_*) > R^{-\frac{\alpha}{2}}$:} We use again the maximum property of $G_\varepsilon$ restricted to the set $[0,t_1] \times \mathbb{Z}$ at $(t_*,x_*)$. With (\ref{LiYauBeweisFall2}) we now obtain
\begin{align*}
G_\varepsilon (t_1,x) \leq G_\varepsilon (t_*,x_*) \leq 1+ \frac{1}{\varphi (t_1)}  g^{-1}\Big(\frac{C(\varepsilon,M)}{2 R^\frac{\alpha}{2}}\Big), \quad x \in  \mathbb{Z},
\end{align*}
as $g^{-1}$ is strictly increasing.
\end{itemize}
Summarizing these results, we get for any $(t_1,x) \in (0,\infty) \times \mathbb{Z}$
\begin{align*}
\frac{(-L v_\varepsilon(t_1,x))\psi_R(x)}{\varphi (t_1)} = G_\varepsilon(t_1,x) \leq 1+\frac{1}{\varphi(t_1)} \max \Big\{\frac{1}{R^\frac{\alpha}{2}}, g^{-1} \Big(\frac{C(\varepsilon,M)}{2 R^\frac{\alpha}{2}}\Big)\Big\},
\end{align*}
which shows (\ref{ProofLiYauToShow}). Finally, sending $R \to \infty$ we arrive at
\begin{align*}
-L v_\varepsilon(t,x) \leq \varphi (t), \ (t,x) \in (0,\infty) \times \mathbb{Z}, \ \varepsilon>0.
\end{align*}
This shows (\ref{LiYauInequEps1}).

It remains to study the limit as $\varepsilon\to 0$. For arbitrarily fixed 
$(t,x) \in (0,\infty) \times \mathbb{Z}$, we consider the sets $S_1=\{j \in \mathbb{Z}: u(t,x+j)<\frac{1}{2}\}$ and $S_2= \mathbb{Z}\setminus S_1$. Let $\varepsilon \in (0,\frac{1}{2})$. From \eqref{LiYauInequEps1} we deduce
\begin{equation}\label{eq:rearrangedforBeppo}
0 \leq -\sum\limits_{j \in S_1} k(j) v_\varepsilon(t,x+j) \leq \varphi(t) + \sum\limits_{j \in S_2} k(j)\big( v_\varepsilon(t,x+j)-v_\varepsilon(t,x)\big) - v_\varepsilon(t,x) \sum\limits_{j \in S_1} k(j),
\end{equation}
where the lower bound follows from the definition of the set $S_1$. By the theorem of dominated convergence we observe that 
\begin{equation*}
\lim\limits_{\varepsilon \to 0} \sum\limits_{j \in S_2} k(j)\big( v_\varepsilon(t,x+j)-v_\varepsilon(t,x)\big)  = \sum\limits_{j \in S_2} k(j)\big( v(t,x+j)-v(t,x)\big).
\end{equation*}
Further, the monotone convergence theorem implies that
\begin{equation*}
\lim\limits_{\varepsilon \to 0} \Big(-\sum\limits_{j \in S_1} k(j) v_\varepsilon(t,x+j)\Big) = -\sum\limits_{j \in S_1} k(j) v(t,x+j).
\end{equation*}
Consequently, sending $\varepsilon\to 0$ in \eqref{eq:rearrangedforBeppo} yields that $v(t,x+\cdot) \in \ell_k^1(\mathbb{Z})$ and also, after a small rearrangement, that 
\begin{equation} \label{finalest2}
- L v (t,x) \leq \varphi(t).
\end{equation}
Finally, \eqref{LiYauInequ2} follows from \eqref{finalest2} by \cite[Lemma 2.2]{WZ}.
\end{proof}

%

\begin{bemerkung}
In the assumptions of Theorem \ref{SatzLiYau}, instead of (\ref{BedinungSA}) we could have also assumed that $g$ satisfies the estimate
\begin{equation*}
g(x)+g(y) \leq g(x+ \sigma y), x,y \in [0,\infty),
\end{equation*}
with some $\sigma \geq 1$. As all $CD$-functions of our interest satisfy this estimate with $\sigma =1$, we restricted ourselves to this case.
\end{bemerkung}

\section{Harnack inequality and heat kernel estimate}
\label{Chapter6}

One of the most important applications of the Li-Yau estimate lies in the derivation of a Harnack inequality.

\begin{satz}
\label{SatzHarnackUngl}
Let $L$ be an operator of the type (\ref{eq:operator}) and $k$ be the kernel corresponding to $L$. Suppose that $u:[0,\infty) \times \mathbb{Z} \to (0,\infty)$ is $C^1$ in time and such that $\Upsilon \big(\log u(t,x+\cdot)-\log u(t,x) \big)$ belongs to $\ell^1_k (\mathbb{Z})$ for all $t>0$ and $x \in \Z$. Suppose further that $u$ satisfies the differential Harnack estimate 
\begin{align}
\partial_t \log (u) \geq \Psi_\Upsilon(\log u) - \varphi (t)\; \text{ on } (0,\infty) \times \mathbb{Z},
\label{HarnackVoraussetzung}
\end{align}
where $\varphi: (0,\infty) \to [0,\infty)$ is continuous. Let $0 < t_1 < t_2$ and $x_1,x_2 \in \mathbb{Z}$. Then for every $N \in \mathbb{N}$ and every sequence of distinct points $(y_i)_{i = 0,1,\dots,N} \subset \mathbb{Z}$ satisfying $y_0 = x_1$, $y_N = x_2$ and $k(y_i-y_{i-1}) > 0, \  i \in \{1,\dots,N\},$ there holds
\begin{equation}
u(t_1,x_1) \leq u(t_2,x_2) \exp \Big(\int_{t_1}^{t_2} \varphi(t) \ \mathrm{d}t + \frac{2N}{t_2-t_1} \sum_{i=1}^N \frac{1}{k(y_i-y_{i-1})}\Big).
\label{Harnackungleichung}
\end{equation}
\end{satz}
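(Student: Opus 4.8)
The plan is to chain the differential Harnack estimate \eqref{HarnackVoraussetzung} along the given path $y_0,\dots,y_N$ and thereby reduce everything to a single-edge, single-time-interval inequality. Write $v:=\log u$. From \eqref{HarnackVoraussetzung} and the nonnegativity of $\Psi_\Upsilon$ (since $\Upsilon\ge 0$) two facts will be used repeatedly: at every $z\in\Z$ and $t>0$ one has $\partial_t v(t,z)\ge -\varphi(t)$, so each $v(\cdot,z)$ is almost non-decreasing in time; and, keeping only the $w$-summand in $\Psi_\Upsilon(v)(t,z)$, one has $\partial_t v(t,z)\ge k(w-z)\,\Upsilon\big(v(t,w)-v(t,z)\big)-\varphi(t)$ for every $w\in\Z$ (the hypothesis $\Upsilon(v(t,z+\cdot)-v(t,z))\in\ell^1_k$ is what makes all these expressions finite). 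I would first prove the single-edge estimate: if $z\sim w$, i.e.\ $k_e:=k(w-z)>0$, and $0<a<b$, then
\[
v(a,z)-v(b,w)\ \le\ \int_a^b\varphi(r)\,\mathrm{d}r\ +\ \frac{2}{(b-a)\,k_e}.
\]
Granting this, the theorem follows at once: given $N$ and the path, partition $[t_1,t_2]$ into the $N$ equal subintervals $[s_{i-1},s_i]$ with $s_i=t_1+\tfrac{i}{N}(t_2-t_1)$ and length $\tau:=(t_2-t_1)/N$, use the telescoping identity $v(t_1,x_1)-v(t_2,x_2)=\sum_{i=1}^N\big(v(s_{i-1},y_{i-1})-v(s_i,y_i)\big)$, apply the single-edge estimate to the edge $\{y_{i-1},y_i\}$ on $[s_{i-1},s_i]$, and sum; the $\varphi$-integrals add to $\int_{t_1}^{t_2}\varphi$ and the error terms to $\sum_{i=1}^N\tfrac{2}{\tau\,k(y_i-y_{i-1})}=\tfrac{2N}{t_2-t_1}\sum_{i=1}^N\tfrac{1}{k(y_i-y_{i-1})}$, which after exponentiation is \eqref{Harnackungleichung}.

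For the single-edge estimate, set $p(t):=v(t,z)-v(t,w)\in C^1([a,b])$ and $\theta:=\tfrac{2}{(b-a)k_e}$. The cornerstone is the ``follow the minimum'' bound $v(a,z)-v(b,w)\le \min_{t\in[a,b]}p(t)+\int_a^b\varphi(r)\,\mathrm{d}r$. To see it, let $t_0\in[a,b]$ realise $\min p$; on $[a,t_0]$ write $\partial_t v(\cdot,w)=\partial_t v(\cdot,z)-\dot p\ge -\varphi-\dot p$ and integrate to get $v(t_0,w)-v(a,w)\ge p(a)-p(t_0)-\int_a^{t_0}\varphi$, hence $v(a,z)-v(t_0,w)\le p(t_0)+\int_a^{t_0}\varphi$; on $[t_0,b]$ use $\partial_t v(\cdot,w)\ge -\varphi$ to get $v(t_0,w)-v(b,w)\le \int_{t_0}^b\varphi$; adding the two gives the claim. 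Consequently the single-edge estimate is reduced to showing that the log-difference dips below $\theta$ somewhere on $[a,b]$, i.e.\ $\min_{t\in[a,b]}\big(\log u(t,z)-\log u(t,w)\big)\le \tfrac{2}{(b-a)k_e}$.

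This minimum bound is where the genuine content lies, and I expect it to be the main obstacle. If $p(t_0)\le\theta$ for some $t_0$ there is nothing to prove, so suppose $p(t)>\theta$ on all of $[a,b]$; here one must use the full differential Harnack and not merely $\Psi_\Upsilon\ge 0$. From $\partial_t v(t,w)\ge k_e\,\Upsilon(p(t))-\varphi(t)$ and the fact that $\Upsilon$ is increasing on $[0,\infty)$ one obtains $v(b,w)-v(a,w)\ge k_e\!\int_a^b\Upsilon(p(t))\,\mathrm{d}t-\int_a^b\varphi$, hence $v(a,z)-v(b,w)\le p(a)-k_e\!\int_a^b\Upsilon(p(t))\,\mathrm{d}t+\int_a^b\varphi$, and the task is to show that the production term $k_e\!\int_a^b\Upsilon(p(t))\,\mathrm{d}t$ absorbs the excess $p(a)-\theta$. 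The governing dichotomy is: either $p$ stays comparable to $p(a)$ over a fixed fraction of $[a,b]$, so that the exponential growth of $\Upsilon(r)=e^r-1-r$ makes $k_e\!\int_a^b\Upsilon(p(t))\,\mathrm{d}t$ large; or $p$ falls steeply from $p(a)$, and then $\dot p=\partial_t v(\cdot,z)-\partial_t v(\cdot,w)$ forces either $v(\cdot,w)$ to gain at least $p(a)-\theta$ over a short time — making $v(b,w)$ large and the left-hand side small — or $\partial_t v(\cdot,z)$ to be strongly negative, which because $\partial_t v(\cdot,z)\ge -\varphi$ forces $\varphi$, and hence $\int_a^b\varphi$, to be large. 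Carrying out this dichotomy quantitatively so that the two contributions combine into the precise constant $2$ (exploiting the convexity of $\Upsilon$, its exact asymptotics, and the identity $k_e(b-a)\theta=2$) is the delicate point; everything else — the telescoping, the ``follow the minimum'' step, and the fact that $v=\log u$ lies in the relevant function spaces and satisfies \eqref{HarnackVoraussetzung} (already established in the proof of Theorem~\ref{SatzLiYau}) — is routine.
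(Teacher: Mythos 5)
Your overall architecture (reduce to a single--edge, single--time--interval estimate and chain it along the path with the equidistant times $s_i=t_1+\tfrac{i}{N}(t_2-t_1)$) coincides with the paper's proof of the case $N>1$, and your ``follow the minimum'' computation is correct as far as it goes. But the single--edge estimate itself is where the whole content of the theorem sits, and there your argument has a genuine gap, in two respects. First, the intermediate claim you reduce to, namely $\min_{t\in[a,b]}\big(\log u(t,z)-\log u(t,w)\big)\le \tfrac{2}{(b-a)k_e}$, is simply false under the hypotheses: take $u(t,x)=\exp\big(c(x)+\lambda t\big)$ with $c$ bounded, $c(z)-c(w)=P$ arbitrarily large, and $\lambda\ge \sup_x\Psi_\Upsilon(c)(x)$. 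Since $\varphi\ge 0$, this $u$ is positive, $C^1$ in time, satisfies the $\ell^1_k$ hypothesis and \eqref{HarnackVoraussetzung}, yet $p(t)\equiv P$ exceeds $\tfrac{2}{(b-a)k_e}$ for every $t$. (The conclusion \eqref{Harnackungleichung} still holds for this $u$ because $v(b,w)$ contains the large term $\lambda b$ --- which is exactly the mechanism your reduction discards.) Second, in the case $p>\theta$ on all of $[a,b]$ you switch to the bound $v(a,z)-v(b,w)\le p(a)-k_e\int_a^b\Upsilon(p)\,\mathrm{d}t+\int_a^b\varphi$ and then describe a dichotomy (``$p$ stays large'' versus ``$p$ drops steeply''), but you do not carry it out; you yourself flag the quantitative combination yielding the constant $2$ as ``the delicate point''. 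That delicate point is precisely the theorem's core, so the proposal stops short of a proof. Note also that anchoring the production integral at $t=a$ is too lossy: when $p$ drops because $v(\cdot,w)$ rises much faster than the production bound, neither of your two inequalities alone closes the argument.

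The paper resolves exactly this issue differently. For $N=1$ it telescopes through an \emph{intermediate time} $s$, keeping the production term only on the leg at $x_2$ and only for the single edge, which gives $\log\tfrac{u(t_1,x_1)}{u(t_2,x_2)}\le \int_{t_1}^{t_2}\varphi+\omega(s)$ with $\omega(t)=\delta(t)-k(x_2-x_1)\int_t^{t_2}\Upsilon\big(\delta(\tau)\big)\,\mathrm{d}\tau$ and $\delta(t)=\log\tfrac{u(t,x_1)}{u(t,x_2)}$; it then chooses $s$ as a minimum point of $\omega$ and invokes the lemma from the proof of \cite[Theorem 6.1]{DKZ} that $\min_{[t_1,t_2]}\omega\le \tfrac{2}{k(x_2-x_1)(t_2-t_1)}$ (proved there by an ODE comparison using $\Upsilon(x)\ge x^2/2$ for $x\ge 0$). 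It is the combined functional $\omega$, minimized over the intermediate time, that correctly balances the spatial difference against the production integral; your two estimates treat these ingredients separately and cannot be assembled into the sharp bound. To repair your proof you would need to prove an analogue of the DKZ minimum lemma for $\omega$ (or reproduce its ODE argument), after which your chaining step gives \eqref{Harnackungleichung} exactly as in the paper.
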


Our proof follows the procedure in \cite[Theorem 6.1]{DKZ}.

\begin{proof}
Let $0 < t_1 < t_2$, $s \in [t_1,t_2] =: J$ and $x_1,x_2 \in \mathbb{Z}$. We first consider the case $N=1$. Then we have by assumption (\ref{HarnackVoraussetzung})
\begin{align*}
\log \frac{u(t_1,x_1)}{u(t_2,x_2)} &= \log \frac{u(t_1,x_1)}{u(s,x_1)} + \log \frac{u(s,x_1)}{u(s,x_2)} + \log \frac{u(s,x_2)}{u(t_2,x_2)} \\ &= -\int_{t_1}^s \partial_t \log u(t,x_1) \ \mathrm{d}t + \log \frac{u(s,x_1)}{u(s,x_2)} - \int_s^{t_2} \partial_t \log u(t,x_2) \ \mathrm{d}t \\ &\leq \int_{t_1}^{t_2} \varphi (t) \ \mathrm{d}t + \log \frac{u(s,x_1)}{u(s,x_2)} - \int_s^{t_2} \Psi_\Upsilon (\log u)(t,x_2) \ \mathrm{d}t \\ &\leq \int_{t_1}^{t_2} \varphi (t) \ \mathrm{d}t + \log \frac{u(s,x_1)}{u(s,x_2)} - \int_s^{t_2} k(x_1-x_2) \Upsilon\big(\log u(t,x_1)-\log u(t,x_2)\big) \ \mathrm{d}t \\ &= \int_{t_1}^{t_2} \varphi (t) \ \mathrm{d}t + \delta(s) - k(x_2-x_1) \int_s^{t_2} \Upsilon\big(\delta(t)\big) \ \mathrm{d}t,
\end{align*}
where we set 
\begin{align*}
\delta(t) = \log \frac{u(t,x_1)}{u(t,x_2)},\quad t \in J.
\end{align*}

We now choose $s \in J$ such that the continuous function $\omega$ defined by
\begin{align*}
\omega (t) = \delta (t) - k(x_2-x_1) \int_t^{t_2} \Upsilon\big( \delta(\tau)\big) \ \mathrm{d}\tau, t \in J,
\end{align*}
attains its minimum at $s$. In the proof of \cite[Theorem 6.1]{DKZ} it was shown that for this function the estimate
\begin{align*}
\omega(s) = \min_{t \in J} \omega (t) \leq \frac{2}{k(x_2-x_1) (t_2-t_1)}
\end{align*}
holds true. We thus have
\begin{align*}
\log \frac{u(t_1,x_1)}{u(t_2,x_2)} \leq \int_{t_1}^{t_2} \varphi (t) \ \mathrm{d}t + \frac{2}{k(x_2-x_1) (t_2-t_1)}, 
\end{align*}
which implies the statement for $N=1$. 

Turning now to the case $N>1$, we consider a sequence of points $(y_i)_{i = 0,1,\dots,N}$ such that $y_0 = x_1$, $y_N = x_2$ and $k(y_i-y_{i-1}) > 0, \ i \in \{1,\dots,N\}$. Defining the times $\tau_i = t_1 + i\frac{t_2-t_1}{N}, i=0,1,\dots,N,$ and employing the result for $N=1$ we obtain
\begin{align*}
\log \frac{u(t_1,x_1)}{u(t_2,x_2)} &= \sum_{i=1}^N \log \frac{u(\tau_{i-1},y_{i-1})}{u(\tau_i,y_i)} \\&\leq \sum_{i=1}^N \Big(\int_{\tau_{i-1}}^{\tau_i} \varphi (t) \ \mathrm{d}t + \frac{2}{k(y_i-y_{i-1})(\tau_i-\tau_{i-1})}\Big) \\&= \int_{t_1}^{t_2} \varphi (t) \ \mathrm{d}t + \frac{2N}{t_2-t_1} \sum_{i=1}^N \frac{1}{k(y_i-y_{i-1})}.
\end{align*}
This proves (\ref{Harnackungleichung}).
\end{proof}

\begin{korollar}
\label{KorollarHarnack}
Let $L$, $k$, $u$ and $\varphi$ be as in Theorem \ref{SatzHarnackUngl}.
Assume in addition that  $\varphi$ has a logarithmic singularity in $0$, i.e. $\varphi(t) \sim -c\log (t) \text{ as } t \to 0$ with some constant $c>0$. Then we can also allow for the choice $t_1=0$ in Theorem \ref{SatzHarnackUngl}.
\end{korollar}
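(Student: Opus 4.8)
The plan is to deduce the case $t_1 = 0$ from Theorem \ref{SatzHarnackUngl} by a straightforward limit passage $t_1 \to 0^+$ in the Harnack inequality \eqref{Harnackungleichung}.

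First I would record the only analytic fact that is needed, namely that $\varphi$ is integrable on $(0,t_2]$. This follows at once from the hypothesis $\varphi(t) \sim -c\log t$ as $t \to 0$: since $-\log$ is integrable near $0$, there is a $t_0 \in (0,t_2)$ with $0 \le \varphi(t) \le 2c\,(-\log t)$ for $t \in (0,t_0]$, while $\varphi$ is bounded on the compact interval $[t_0,t_2]$ by continuity. Hence $\int_0^{t_2}\varphi(t)\,\mathrm{d}t < \infty$, and because $\varphi \ge 0$ the monotone convergence theorem gives
\[
\int_{t_1}^{t_2}\varphi(t)\,\mathrm{d}t \;\longrightarrow\; \int_0^{t_2}\varphi(t)\,\mathrm{d}t \quad\text{as } t_1 \downarrow 0 .
\]

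Now fix $x_1, x_2 \in \mathbb{Z}$, $t_2 > 0$, $N \in \mathbb{N}$ and a sequence of distinct points $(y_i)_{i=0,\dots,N} \subset \mathbb{Z}$ with $y_0 = x_1$, $y_N = x_2$ and $k(y_i - y_{i-1}) > 0$ for $i = 1,\dots,N$. For every $t_1 \in (0,t_2)$ Theorem \ref{SatzHarnackUngl} applies (its hypotheses coincide with the standing assumptions here), so \eqref{Harnackungleichung} yields
\[
u(t_1,x_1) \le u(t_2,x_2)\exp\Big(\int_{t_1}^{t_2}\varphi(t)\,\mathrm{d}t + \frac{2N}{t_2-t_1}\sum_{i=1}^N \frac{1}{k(y_i-y_{i-1})}\Big).
\]
I would then let $t_1 \downarrow 0$. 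The left-hand side tends to $u(0,x_1)$ because $u$ is $C^1$, hence continuous, in time on $[0,\infty)$. On the right-hand side the integral converges by the previous step, $\tfrac{2N}{t_2-t_1} \to \tfrac{2N}{t_2}$, and the exponential is continuous; therefore
\[
u(0,x_1) \le u(t_2,x_2)\exp\Big(\int_0^{t_2}\varphi(t)\,\mathrm{d}t + \frac{2N}{t_2}\sum_{i=1}^N \frac{1}{k(y_i-y_{i-1})}\Big),
\]
which is exactly \eqref{Harnackungleichung} with $t_1 = 0$.

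I do not expect any genuine obstacle here: the whole content is the limit $t_1 \to 0^+$, and the only thing to verify is the finiteness of $\int_0^{t_2}\varphi$, which is immediate from the logarithmic singularity. The single point to keep in mind is that all the structural assumptions of Theorem \ref{SatzHarnackUngl} — the $\ell^1_k$-membership of $\Upsilon(\log u(t,x+\cdot)-\log u(t,x))$ and the differential Harnack estimate \eqref{HarnackVoraussetzung} — are carried over verbatim, so nothing about $u$ has to be re-established.
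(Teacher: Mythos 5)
Your argument is correct and is exactly the (implicit) reasoning behind the corollary, which the paper states without a written proof: the logarithmic singularity makes $\int_0^{t_2}\varphi(t)\,\mathrm{d}t$ finite, and letting $t_1\downarrow 0$ in \eqref{Harnackungleichung}, using continuity of $u$ at $t=0$, gives the inequality with $t_1=0$. No gaps; the hypotheses of Theorem \ref{SatzHarnackUngl} indeed carry over unchanged, so nothing further needs to be verified.
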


\begin{bemerkungen}
\label{BemerkungHarnack}
(i) Consider the situation of Theorem \ref{SatzLiYau}. Then Theorem
\ref{SatzHarnackUngl} applies to the solution $u$, where $\varphi$ is the relaxation function associated with the $CD$-function $2F$. 
Corollary \ref{KorollarHarnack} is applicable if $\varphi$ possesses a logarithmic singularity in $0$. Our results from Section 3 show that this is the case for a large class of operators
including also $L_\beta$ ($\beta>0$) and the fractional discrete Laplacian $-(-\Delta)^\frac{\beta}{2}$ ($\beta\in (0,2)$) in one dimension.

(ii) The number $N \in \mathbb{Z}$ appearing in Theorem \ref{SatzHarnackUngl} can be understood as the number of steps that are made to get from $x_1$ to $x_2$. $\frac{t_2-t_1}{N}$ represents the step size in time which we chose to be equidistant.

(iii) Consider a function $u$ as in Theorem \ref{SatzHarnackUngl} and set $u_\varepsilon (t,x) = u(t,x)+\varepsilon, \varepsilon>0$. Then condition (\ref{HarnackVoraussetzung}) on $u$ can be replaced by the same condition for the family $u_\varepsilon$, $\varepsilon\in (0,\varepsilon_0]$. Indeed, consider the last calculation in the proof of Theorem \ref{SatzHarnackUngl}. Replacing $\log u$ by $\log u_\varepsilon$ gives
\begin{align*}
\log\big(u(t_1,x_1)+\varepsilon\big) \leq \log\big(u(t_2,x_2)+\varepsilon\big) + \int_{t_1}^{t_2} \varphi(t) \ \mathrm{d}t + \frac{2N}{t_2-t_1} \sum_{i=1}^N \frac{1}{k(y_i-y_{i-1})}, \ \varepsilon\in (0,\varepsilon_0].
\end{align*}
Sending $\varepsilon \to 0$ and applying the exponential function yields (\ref{Harnackungleichung}).
\end{bemerkungen}

A natural question regarding the Harnack inequality (\ref{Harnackungleichung}) is which choice of intermediate points $(y_i)_{i=0,\dots,N}$ gives the best upper bound. In other words, aiming to find the optimal upper bound in this inequality is equivalent to minimizing the occurring sum
\begin{equation}
\mathcal{S} = \mathcal{S} \big((y_i)_{i=0}^N \big) := N \sum_{i=1}^N \frac{1}{k(y_i-y_{i-1})},
\label{HarnackSumme}
\end{equation}
which means that we look for a sequence of points $(y_i)_{i=0,1,\dots,N}$ with $N \in \N$ that minimizes (\ref{HarnackSumme}). 

In the case of a general kernel without any monotonicity assumption this minimization problem turns out to be quite difficult as it could be of advantage to jump away from $x_2$. We illustrate this in the following example.

\begin{beispiel}
For $0<\beta_1<1<\beta_2<2$ consider the kernel $k:\mathbb{N} \to [0,\infty)$ given by
\begin{align*}
k(j) = \begin{cases}
\frac{1}{j^{1+\beta_1}}, j \text{ even,}\\
\frac{1}{j^{1+\beta_2}}, j \text{ odd}.
\end{cases}
\end{align*}
We further set $k(0)=0$ and extend $k$ to a symmetric kernel on $\mathbb{Z}$. Let $x_1,x_2 \in \mathbb{Z}$ with $x_1<x_2$ such that $M=x_2-x_1$ is odd. Choosing $N=1$  in (\ref{HarnackSumme}) (which means that we jump directly from $x_1$ to $x_2$) gives $\mathcal{S} = M^{1+\beta_2}.$
For $N=M$ (which means that we make $M$ steps of size $1$) we get a better estimate as we then have $\mathcal{S} = M^2$.
Both estimates can be improved by first making one single good step and then jumping directly to $x_2$. This good step can also be in the wrong direction as we will see. Therefore, set $N=2$ and define $y_1 = x_1-1$. Then $\mathcal{S} = 2 \big(1+(M+1)^{1+\beta_1}\big).$
Choosing $M$ big enough, we see that this estimate further improves our already found upper bounds as we have $\beta_1<\beta_2$.
\end{beispiel}

We now want to focus on kernels that are non-increasing on $\mathbb{N}$ as these are of main interest in this article. For these kernels one can show that jumping away from $x_2$ will not give a better estimate. Indeed, consider w.l.o.g. for $N=2$ the three points $y_{0},y_1,y_2 \in \mathbb{Z}$ chosen such that $y_1<y_0<y_2$. Then 
\begin{align*}
\mathcal{S} \big((y_i)_{i=0}^2 \big) = 2\Big(\frac{1}{k(y_0-y_1)} + \frac{1}{k(y_2-y_1)}\Big) > \frac{1}{k(y_2-y_0)}.
\end{align*}
This means that jumping directly from $y_0$ to $y_2$ would give a better upper bound.

\begin{proposition}\label{prop_Harnacksum}
Let the kernel $k$ be non-increasing on $\N$ and set $q_0=\min\{n \in \mathbb{N}: k(n)=0\}$ if $k$ has finite support and $q_0=\infty$ otherwise.
Let $x_1,x_2 \in \Z$ with $x_1<x_2$ and set $M=x_2-x_1$. Assume that there exists a convex $C^1$-mapping $q:[1,\min \{M,q_0\})\to(0,\infty)$ such that $q(n)=\frac{1}{k(n)}$ for any $n \in \N \cap [1,\min \{M,q_0-1\}]$. Then the following two assertions hold true.
\begin{itemize}
\item[(i)] If $k(M)>0$ and $\frac{q'(y)y}{q(y)}\leq 2$ holds for all $y \in [1,M]$, then \eqref{HarnackSumme} is minimized by the choice of intermediate points $y_0=x_1$ and $y_1=x_2$.
\item[(ii)] If $\frac{q'(y)y}{q(y)}\geq 2$ holds for all $y \in [1,\min\{M, q_0\})$, then \eqref{HarnackSumme} is minimized by the choice of intermediate points $y_i=x_1+i$ for $i \in \{0,...,M\}$.
\end{itemize}
\end{proposition}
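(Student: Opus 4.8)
The core idea is to reduce both statements to a one-dimensional optimization over "jump lengths." Given any admissible sequence $(y_i)_{i=0}^N$, the relevant quantity is $\mathcal{S}=N\sum_{i=1}^N q(|y_i-y_{i-1}|)$, where one may as well assume all jumps go in the positive direction: since $k$ is non-increasing, replacing a "backward" jump by the corresponding net displacement only decreases each term $q(|y_i-y_{i-1}|)=1/k(|y_i-y_{i-1}|)$ (this is precisely the three-point computation displayed just before the proposition, and it generalizes directly). Hence it suffices to consider partitions $M=m_1+\dots+m_N$ with $m_i\in\N$, $m_i<q_0$, and to minimize $\Phi(N;m_1,\dots,m_N):=N\sum_{i=1}^N q(m_i)$ over all $N$ and all such partitions.

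\emph{Step 1: convexity reduces to equal steps.} For fixed $N$, by convexity of $q$ on $[1,\min\{M,q_0\})$ and Jensen's inequality, $\sum_{i=1}^N q(m_i)\ge N\,q\big(\tfrac{M}{N}\big)$, so $\Phi(N;\cdot)\ge N^2 q(M/N)=:f(N)$, with equality when $M/N\in\N$ and all steps are equal. (When $M/N\notin\N$ one still gets the lower bound $f(N)$ from convexity; I would either allow $N\mid M$ for the comparison or note that the discrete minimum over integer partitions is bounded below by $f(N)$, which is all that is needed.) Thus the problem becomes: minimize $f(N)=N^2 q(M/N)$ over the admissible range of $N$, and compare the two extreme candidates $N=1$ (direct jump, requires $k(M)>0$) and $N=M$ (unit steps, for which $f(M)=M^2 q(1)=M^2/k(1)$).

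\emph{Step 2: monotonicity of $f$ via the hypothesis on $q'y/q$.} Put $g(s):=f(M/s)=s^{-2}q(s)M^2$ for $s\in[1,M]$, i.e. $g$ as a function of the step length $s=M/N$. Then $g'(s)=M^2\big(s^{-2}q'(s)-2s^{-3}q(s)\big)=M^2 s^{-3}q(s)\big(\tfrac{q'(s)s}{q(s)}-2\big)$. Under hypothesis (i), $\tfrac{q'(s)s}{q(s)}\le 2$ on $[1,M]$, so $g$ is non-increasing in $s$; hence $f$ is minimized by taking $s$ as large as possible, i.e. $s=M$, $N=1$, giving $\mathcal{S}=q(M)=1/k(M)$ — which is why $k(M)>0$ is required. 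Under hypothesis (ii), $\tfrac{q'(s)s}{q(s)}\ge 2$ on $[1,\min\{M,q_0\})$, so $g$ is non-decreasing in $s$, and $f$ is minimized by taking $s$ as small as possible, $s=1$, $N=M$, giving the unit-step sequence $y_i=x_1+i$. In case (ii) one should also check that $N>M$ (jumps of length $<1$) is not available since $m_i\in\N$, and that when $q_0\le M$ the direct long jumps are simply forbidden, so the unit-step choice is the only one realizing arbitrarily small $s$; the monotonicity argument then still delivers the minimum.

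\emph{Main obstacle.} The delicate point is handling the discreteness: Jensen gives the clean bound $f(N)$ only when $N\mid M$, so for general $N$ one must argue that no integer partition into unequal steps beats the relevant endpoint candidate. The cleanest route is to keep everything at the level of the lower bound $N\sum q(m_i)\ge f(N)$ (valid for all partitions by convexity, with Jensen applied to the points $m_i\in[1,M]$) and the monotonicity of $f$ from Step 2, so that $f(1)\le f(N)\le\mathcal{S}$ in case (i) and $f(M)\le f(N)\le\mathcal{S}$ in case (ii) for every admissible configuration; the claimed minimizers achieve these bounds. A secondary technical check is that the "straighten the backward jumps" reduction is legitimate: one repeatedly replaces a consecutive triple $y_{i-1},y_i,y_{i+1}$ with a non-monotone middle by the pair $y_{i-1},y_{i+1}$, which both lowers $\sum q$ and lowers $N$, hence lowers $\mathcal{S}$ — so an optimal sequence can be taken monotone, and then all the above applies.
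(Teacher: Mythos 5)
Your proposal is correct and follows essentially the same route as the paper: reduce to monotone (ordered) sequences, apply Jensen's inequality with the convexity of $q$ to get the lower bound $\mathcal{S}\ge N^2 q(M/N)$, and then use the sign of $\frac{q'(y)y}{q(y)}-2$ to show the monotonicity of $N\mapsto N^2q(M/N)$, so that the endpoint candidates $N=1$ and $N=M$ (which attain the bound with equality) are optimal. Your extra care about non-integer $M/N$ and the straightening of backward jumps only makes explicit what the paper treats as a w.l.o.g.\ remark.
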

\begin{proof}
In order to minimize the expression \eqref{HarnackSumme} we can assume w.l.o.g.\ by monotonicity of the kernel that the set $\big(y_i\big)_{i=0}^N$ is ordered in the sense that
$
x_1 =y_0 <y_{i-1}<y_i< y_N=x_2$
for every $i \in \{2,...,N-1\}$. Consequently,  
$
\sum\limits_{i=1}^N |y_{i}-y_{i-1}| = M.
$
By the properties of $q$, we have by Jensen's inequality
\begin{align*}
\mathcal{S}\big((y_i)_{i=0}^N\big) = N \sum\limits_{i=1}^N q\big( y_i-y_{i-1} \big) \geq N^2 q\Big( \frac{1}{N}\sum\limits_{i=1}^N (y_i - y_{i-1}) \Big) = N^2 q\big(\frac{M}{N}\big).
\end{align*}
We consider the mapping $h(r)=r^2 q\big(\frac{M}{r}\big)$, $r \in \big(\max \big\{1,\frac{M}{q_0}\big\},M \big]$ and observe that
\begin{align*}
h'(r)= 2r q\big(\frac{M}{r}\big) - q'\big(\frac{M}{r}\big)M.
\end{align*}
Consequently, $h$ is non-decreasing (non-increasing) if and only if
\begin{equation*}
2 \geq \, (\le )\,\frac{q'\big(\frac{M}{r}\big)M}{q\big(\frac{M}{r}\big)r},
\end{equation*}
which shows both assertions, as $\mathcal{S}\big((y_i)_{i=0}^N\big)= N^2 q\big(\frac{M}{N}\big)$ for the two specific choices of intermediate points considered in (i) and (ii), respectively.
\end{proof}
\begin{bemerkung}
The condition on $q$ in the above proposition already appeared in the context of $CD$-functions in \cite{WEB}. Observe that the condition $\frac{q'(y)y}{q(y)}\geq (\le)\,2$ is equivalent to saying that $y\mapsto \frac{q(y)}{y^2}$ is non-decreasing (non-increasing), see also \cite[Remark 3.4(i)]{WEB}. In particular, the condition of Proposition \ref{prop_Harnacksum}(i) means that $n \mapsto n^2k(n)$, $n \in \mathbb{N}$, is non-decreasing, which clearly implies that $k$ has no finite second moment. 
As a consequence, we note that for a kernel with finite second moment the condition of 
Proposition \ref{prop_Harnacksum}(i) cannot be satisfied for every $M>0$.  
\end{bemerkung}
\begin{beispiel}\label{HarnackExamples}
(i) In case of the power-type kernel $k_\beta$, the mapping $q$ from Proposition \ref{prop_Harnacksum} is given by $q(y)=y^{1+\beta}$, which is convex for any $\beta \in (0,2)$. Clearly, we have that 
\begin{align*}
\frac{q'(y)y}{q(y)}= 1+\beta,
\end{align*}
which is greater or equal than $2$ if $\beta\geq 1$ and less than or equal than $2$ if $\beta\leq 1$. Therefore one chooses always the direct jump  if $\beta \in (0,1)$ and takes the single steps if $\beta>1$ in order to minimize (\ref{HarnackSumme}). In the case of $\beta=1$ both the single steps and the direct jump lead to the optimal value in \eqref{HarnackSumme}.

(ii) Despite the similarity of the kernel $k_\beta^*$ associated with the fractional discrete Laplacian in one dimension, the situation is slightly different from the power-type kernel in (i). By the representation formula \eqref{FrakLaplacekernel}, we can write
\begin{equation*}
q(y)=c_\beta \frac{\Gamma\big(y+1+\frac{\beta}{2}\big)}{\Gamma\big(y-\frac{\beta}{2}\big)}, \quad y \geq 1,
\end{equation*}
where $\Gamma$ denotes the Gamma function and $c_\beta>0$ is a constant that comes from \eqref{FrakLaplacekernel}. We have
\begin{align*}
q'(y)& = c_\beta \Big( \frac{\Gamma'(y+1+\frac{\beta}{2})}{\Gamma(y-\frac{\beta}{2})} - \frac{\Gamma'(y-\frac{\beta}{2})}{\Gamma(y-\frac{\beta}{2})}\frac{\Gamma(y+1+\frac{\beta}{2})}{\Gamma(y-\frac{\beta}{2})} \Big)\\
&  = q(y) \Big( \psi\big(y+1+\frac{\beta}{2}\big) - \psi\big(y-\frac{\beta}{2}\big)\Big),
\end{align*}
where $\psi$ denotes the Digamma function defined by $\psi(x)=\frac{\Gamma'(x)}{\Gamma(x)}$. Since the Gamma function is logarithmically convex, $ \psi$ is 
non-decreasing and so is $q$. As a byproduct, monotonicity of the kernel $k_\beta^*$ follows (cf. Remark \ref{BemerkungFrakLaplace}). As to the second derivative, we observe
\begin{equation*}
q''(y) = q(y) \Big( \psi'\big(y+1+\frac{\beta}{2}\big) - \psi'\big(y-\frac{\beta}{2}\big) + \Big(\psi\big(y+1+\frac{\beta}{2}\big) - \psi\big(y-\frac{\beta}{2}\big)\Big)^2\Big).
\end{equation*}
We deduce from Theorem 1 in \cite{QiLuo} (choosing $\lambda=\frac{1}{s-t}$ in the notation of \cite{QiLuo}) that $q$ is convex. Assuming now that $\beta\geq 1$, we infer from the monotonicity of $\psi$ that
\begin{align*}
\frac{q'(y)y}{q(y)}&  = y \Big( \psi\big(y+1+\frac{\beta}{2}\big) - \psi\big(y-\frac{\beta}{2}\big)\Big) \geq y \Big( \psi\big(y+\frac{3}{2}\big) - \psi\big(y-\frac{1}{2}\big)\Big)\\
&= y\Big(\frac{1}{y+\frac{1}{2}}+\frac{1}{y-\frac{1}{2}}\Big) = \frac{2y^2}{y^2-\frac{1}{4}}> 2,
\end{align*}
where we applied the recurrence relation $\psi(x+1)=\psi(x)+\frac{1}{x}$ twice.  Consequently, assertion (ii) of Proposition \ref{prop_Harnacksum} applies. In particular, in contrast to the case of power-type kernels, we now have a strict inequality for $\beta=1$ in the corresponding assumption of Proposition \ref{prop_Harnacksum}(ii).  
Since $ \psi\big(y+1+\frac{\beta}{2}\big) - \psi\big(y-\frac{\beta}{2}\big)\to \frac{1}{y}$
as $\beta\to 0+$ for all $y\ge 1$ and by continuity,
for every $M\in \N$ with $M \geq 2$ 
we find a minimal $\beta_M\in (0,1)$ such that Proposition \ref{prop_Harnacksum}(ii) applies
for $\beta \in [\beta_M,1)$, that is, in order to optimize \eqref{HarnackSumme}, one takes single steps. This is in contrast to the situation of the  power-type kernel as described above. Note that $\beta_M\to 1$ as $M \to \infty$. In fact, $\beta_M$ is non-decreasing
as a function of $M$,
by definition, and $\beta_M<1$. Thus the limit $\beta'=\lim_{M\to \infty}\beta_M\le 1$ exists. Since for every $M\in \iN$, $M\ge 2$, the single steps of step size $1$ minimize the sum \eqref{HarnackSumme}, and $k_\beta^*$ is comparable to $k_\beta$, we have
$M^2\le q(M)\le c(\beta')M^{1+\beta'}$ for all $M\ge 2$. Sending $M\to \infty$ shows that
we must have $\beta'=1$.

(iii) Taking the exponential kernel $k(j)=e^{-\alpha |j|}$, $\alpha>0$, we have that $q(y)=e^{\alpha y}$ (which is convex for $\alpha>0$) and hence
\begin{align*}
\frac{q'(y)y}{q(y)} = \alpha y.
\end{align*}
If $\alpha\geq 2$, assertion (ii) of Proposition \ref{prop_Harnacksum} is satisfied due to $y\geq 1$. If instead $\alpha<2$, the best possible choice in \eqref{HarnackSumme} depends on the distance between $x_1$ and $x_2$. If, for instance, $x_1,x_2\in \Z$ are such that $\alpha |x_1-x_2|\leq 2$, then assertion (i) in Proposition \ref{prop_Harnacksum} holds true. However, there are also situations where neither the single steps nor the direct jump lead to the optimal choice in \eqref{HarnackSumme}. Indeed, let $\alpha=1$ and $M:=|x_1-x_2|$ be even. Then one readily checks that the mapping $h$ from the proof of Proposition \ref{prop_Harnacksum} achieves its minimum at $y= \frac{M}{2}$. Then \eqref{HarnackSumme} can be estimated as follows
\begin{align*}
\mathcal{S} \geq N^2 q\big(\frac{M}{N}\big) \geq  \frac{M^2}{4}e^2,
\end{align*}
which corresponds to $\frac{M}{2}$ equidistantly chosen intermediate points  $\big(y_i\big)_{i=1}^\frac{M}{2}$.
\end{beispiel}

As the case of the power-type kernel $k_\beta$ ($\beta\in (0,2)$) is of particular importance for us, we state the Harnack inequality from Theorem \ref{SatzHarnackUngl} in this special case,
taking into account the findings from Example \ref{HarnackExamples}(i).

\begin{satz}
Let $\beta \in (0,2)$, $0 \leq t_1 < t_2$ and $x_1,x_2 \in \mathbb{Z}$. Suppose that $u: [0,\infty) \times \mathbb{Z} \to (0,\infty)$  is a bounded (and $C^1$ in time) solution of the equation $\partial_t u (t,x) - L_\beta u(t,x) = 0$ on $(0,\infty) \times \mathbb{Z}$. Then there holds
\begin{equation*}
u(t_1,x_1) \leq u(t_2,x_2) \exp \Big(\int_{t_1}^{t_2} \varphi(t) \ \mathrm{d}t + \frac{2 \vert x_1-x_2 \vert^{\min\{1+\beta,2\}}}{t_2-t_1}\Big),
\end{equation*}
where $\varphi$ is a relaxation function of the type described in Lemma \ref{LemmaRelaxFunc}, with corresponding parameters given by Theorem \ref{SatzCDFunctionPotenzkern}.
\end{satz}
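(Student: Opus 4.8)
The plan is to obtain the assertion as a consequence of the Li--Yau estimate of Theorem~\ref{SatzLiYau} combined with the abstract Harnack inequality of Theorem~\ref{SatzHarnackUngl}, applied to a sufficiently explicit $CD$-function for $L_\beta$. First I would invoke Theorem~\ref{SatzCDFunctionPotenzkern}, which gives that $L_\beta$ satisfies $CD_\Upsilon(0,F)$ with a $CD$-function $F$ obeying $F(x)\sim c_1 x^\gamma$ as $x\to 0$ for some $\gamma\ge 2$ (namely $\gamma=\frac{1+2\beta}{\beta}$ if $\beta\le\beta_*=\frac{1+\sqrt5}{2}$ and $\gamma=\frac{\beta-\varepsilon}{\beta-\varepsilon-1}$ otherwise) and $F(x)\sim c_2 e^{\nu x}$ as $x\to\infty$. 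By the reduction explained around \eqref{eq:Fhat}, $F$ can be bounded from below by a $CD$-function $\tilde F=c\hat F$ with $\hat F$ as in \eqref{eq:Fhat}, where I choose $M\ge\frac{2}{\nu}$; then $L_\beta$ also satisfies $CD_\Upsilon(0,\tilde F)$, the map $x\mapsto x^{-2}\tilde F(x)$ is non-decreasing on $(0,\infty)$ so that the function $g$ from \eqref{DefFunktiong} associated with $\tilde F$ fulfils \eqref{BedinungSA}, and the relaxation function $\varphi$ of $2\tilde F$ is given explicitly by Lemma~\ref{LemmaRelaxFunc} with these $\gamma$, $\delta=\nu$, $M$ (and the constant replaced by $2c$). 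In particular $\varphi(t)=-\tfrac1\nu\log(2c\nu M^\gamma t)$ for small $t>0$, so $\varphi$ has a logarithmic singularity at $0$.

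Second, I would apply Theorem~\ref{SatzLiYau} to $u$ with $F=\tilde F$. Since $\beta\in(0,2)$, I pick $\alpha\in(0,\min\{1,\beta\})$, so that $k_\beta$ has finite $\alpha$-th moment and $\alpha\in(0,1]$; all hypotheses of Theorem~\ref{SatzLiYau} are then met, and we obtain that $v:=\log u$ satisfies $v(t,x+\cdot)\in\ell_k^1(\mathbb{Z})$ for all $(t,x)$ and the differential Harnack estimate $\partial_t v\ge\Psi_\Upsilon(v)-\varphi(t)$ on $(0,\infty)\times\mathbb{Z}$, i.e.\ \eqref{HarnackVoraussetzung}. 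The remaining integrability hypothesis of Theorem~\ref{SatzHarnackUngl}, namely $\Upsilon\big(v(t,x+\cdot)-v(t,x)\big)\in\ell_k^1(\mathbb{Z})$, follows from $v(t,x+\cdot)\in\ell_k^1(\mathbb{Z})$, the boundedness of $u$ (which bounds $v$ from above and hence bounds $\max\{v(t,x+j)-v(t,x),0\}$ uniformly in $j$), and the elementary estimate $\Upsilon(r)\le e^{\max\{r,0\}}+|r|$, $r\in\mathbb{R}$.

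Third, I would apply Theorem~\ref{SatzHarnackUngl} with this $\varphi$; the case $t_1=0$ is admissible by Corollary~\ref{KorollarHarnack} because $\varphi$ has a logarithmic singularity at $0$. It remains to choose the intermediate points connecting $x_1$ to $x_2$. For $x_1\ne x_2$ set $M=|x_1-x_2|$. If $\beta\ge 1$, take the path of unit steps, so $N=M$ and $k_\beta(y_i-y_{i-1})=1$ for each $i$, whence $\frac{2N}{t_2-t_1}\sum_{i=1}^N k_\beta(y_i-y_{i-1})^{-1}=\frac{2M^2}{t_2-t_1}$; if $\beta\le 1$, take the direct jump $N=1$, $y_0=x_1$, $y_1=x_2$, so that $k_\beta(x_2-x_1)=M^{-(1+\beta)}$ and the term equals $\frac{2M^{1+\beta}}{t_2-t_1}$. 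In both cases the exponent is $\min\{1+\beta,2\}$; by Example~\ref{HarnackExamples}(i) (via Proposition~\ref{prop_Harnacksum}) these are moreover the \emph{optimal} choices. The degenerate case $x_1=x_2$ is obtained directly by integrating \eqref{HarnackVoraussetzung} in time at the fixed site and using $\Psi_\Upsilon(v)\ge 0$, which is consistent with $|x_1-x_2|^{\min\{1+\beta,2\}}=0$.

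No step presents a genuine difficulty, since the analytic substance is entirely contained in Theorems~\ref{SatzCDFunctionPotenzkern}, \ref{SatzLiYau} and \ref{SatzHarnackUngl}; the only points needing attention are the reduction to the explicit $CD$-function $\tilde F=c\hat F$ with $M\ge\frac2\nu$ (which simultaneously secures \eqref{BedinungSA}, the explicit form of $\varphi$ via Lemma~\ref{LemmaRelaxFunc}, and its logarithmic singularity enabling $t_1=0$), the choice of the parameter $\alpha\in(0,\min\{1,\beta\})$, and the verification of the $\ell_k^1$-regularity of $\Upsilon\big(v(t,x+\cdot)-v(t,x)\big)$.
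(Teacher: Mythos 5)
Your proposal is correct and follows exactly the route the paper intends: Theorem \ref{SatzCDFunctionPotenzkern} plus the reduction to $\tilde F=c\hat F$ with $M\ge 2/\nu$ (securing \eqref{BedinungSA} and the explicit relaxation function of Lemma \ref{LemmaRelaxFunc}), then Theorem \ref{SatzLiYau}, then Theorem \ref{SatzHarnackUngl} with Corollary \ref{KorollarHarnack} for $t_1=0$ and the intermediate points from Example \ref{HarnackExamples}(i). You are in fact slightly more careful than the paper in explicitly verifying the $\ell^1_k$-integrability of $\Upsilon\big(v(t,x+\cdot)-v(t,x)\big)$ and in treating the degenerate case $x_1=x_2$, both of which are handled correctly.
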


Finally, we want to use Theorem \ref{SatzHarnackUngl} to derive heat kernel estimates. The proof is inspired by \cite[Theorem 7.6]{BHL}, where the authors proved heat kernel bounds on locally finite graphs. Apart from the setting, we emphasize that we can formulate evidently stronger results as we are also able to treat the case $t_1=0$ in Theorem \ref{SatzHarnackUngl}.

Let $\alpha >0$ and $k$ be a kernel with finite $\alpha$-th moment. Let $L$ be the operator generated by $k$. The heat kernel $p_t (x,y)$ for the equation $\partial_t u -L u = 0$ on $(0,\infty) \times \mathbb{Z}$ can be defined as solution of the equation
\begin{align*}
\begin{cases}
p_0(x,y) = \delta (x,y), \ x,y \in \Z, \\
\partial_t p_t (x,y) = \sum_{z \in \mathbb{Z}} k(x-z) \big(p_t(z,y)-p_t(x,y)\big), \ x,y \in \Z,
\end{cases}
\end{align*}
where $\delta (x,y) = 1$ if $x=y$ and $\delta (x,y)=0$ else, cf.\ e.g.\ \cite{DEL}. As stated in \cite{DEL}, one can show that for any $x,y \in \mathbb{Z}$ there holds $p_t(x,y) = p_t (y,x)$.
With this we can prove the following theorem.

\begin{satz}\label{thm:heatkernelbounds}
Let $\alpha >0$ and $k$ be a kernel with finite $\alpha$-th moment. Let $L$ be the operator generated by $k$. Suppose that $L$ satisfies $CD_\Upsilon (0,F)$ with a $CD$-function $F$ that satisfies $F(x) \sim c_1 x^\gamma$ as $x \to 0$ and $F(x) \sim c_2 e^{\delta x}$ as $x \to \infty$, where $\gamma \geq 2$ and $\delta, c_1,c_2 >0$. Let $p_t(x,y), \ x,y \in \mathbb{Z},$ be the heat kernel of the equation $\partial_t u - Lu = 0$ on $(0,\infty) \times \mathbb{Z}$. Then there exists a constant $c=c(F)>0$ such that 
for any $\nu>0$ there holds the two-sided heat kernel estimate
\begin{align}\label{eq:heatkernelbounds}
\exp \Big(- \Big(\Lambda(t)+\frac{2\vert x-y \vert^2}{k(1) t}\Big)\Big) \leq p_t (x,y) \leq \frac{\exp\Big( C_0 \nu + \frac{2}{|k|_1}\Big)}{2 \lfloor \sqrt{c(t,\nu)-t)} \rfloor}, \ x,y \in \mathbb{Z}, t>0,
\end{align}
where $\Lambda:(0,\infty)\to (0,\infty)$ is given by
\begin{equation*}
\Lambda(t)= \begin{cases}
\frac{t}{\delta}\big(1- \log(2^\gamma c \delta^{1-\gamma}t)\big) &, t\leq t_*\\
\frac{\log(t)}{c e^2} + \tau_1 &, t> t_* \text{ and } \gamma=2 \\
\ \frac{((\gamma-1) t)^\frac{\gamma-2}{\gamma-1}}{(c e^2)^\frac{1}{\gamma-1} (\gamma-2)} + \tau_2 &,  t> t_* \text{ and } \gamma>2,
\end{cases}
\end{equation*}
with constants $\tau_i=\tau_i(c,\gamma,\delta)\in \iR$ for $i \in \{1,2\}$, 
$t_*=\frac{\delta^{\gamma-1}}{2^\gamma c e^2}$,
 $C_0 =C_0(c,\gamma,\delta)>0$, and where $c(t,\nu) =\big(\frac{\gamma-2}{\gamma-1}\nu + t^\frac{\gamma-2}{\gamma-1}\big)^\frac{\gamma-1}{\gamma-2}$ if $\gamma>2$ and  $c(t,\nu)=e^\nu t$ if $\gamma =2$.
\end{satz}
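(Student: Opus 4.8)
The plan is to derive both bounds from the parabolic Harnack inequality of Theorem~\ref{SatzHarnackUngl}, applied to the heat kernel $p_\cdot(\cdot,y)$, after the following reductions. By the discussion preceding Lemma~\ref{LemmaRelaxFunc}, the assumptions $F(x)\sim c_1x^\gamma$ at $0$ and $F(x)\sim c_2e^{\delta x}$ at $\infty$ let us pick a constant $c=c(F)>0$ and a $CD$-function $\tilde F=\tfrac{c}{2}\hat F$, with $\hat F$ of the explicit form \eqref{eq:Fhat} and $M\ge\tfrac{2}{\delta}$ (chosen large enough that also $\tilde F\le F$ everywhere); then $L$ satisfies $CD_\Upsilon(0,\tilde F)$, the function $g$ from \eqref{DefFunktiong} associated with $\tilde F$ satisfies \eqref{BedinungSA} because $x\mapsto x^{-2}\hat F(x)$ is increasing for $M\ge\tfrac2\delta$, and the relaxation function $\varphi$ of $2\tilde F=c\hat F$ is the explicit piecewise function of Lemma~\ref{LemmaRelaxFunc}, having a logarithmic singularity at $0$. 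Since $k$ has finite $\alpha$-th moment and, for each fixed $y$, $p_\cdot(\cdot,y)$ is a positive (on $(0,\infty)\times\Z$), bounded (by $1$, as $\sum_z p_t(z,y)\le1$), $C^1$-in-time solution of the heat equation, Theorem~\ref{SatzLiYau} — applied to the shifted solutions $t\mapsto p_{t+s}(\cdot,y)$ and letting $s\to0$ — yields the differential Harnack estimate \eqref{HarnackVoraussetzung} for $p_\cdot(\cdot,y)$. Hence Theorem~\ref{SatzHarnackUngl} (and, via the logarithmic singularity of $\varphi$, its extension to $t_1=0$ of Corollary~\ref{KorollarHarnack}) is available, with the path-cost contribution and the integrals $\int_s^{s'}\varphi$ computable from Lemma~\ref{LemmaRelaxFunc}; write $\Lambda(t):=\int_0^t\varphi(\tau)\,\mathrm{d}\tau$.

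For the \emph{lower bound}, fix $x,y\in\Z$ and $t>0$ and apply Theorem~\ref{SatzHarnackUngl} to $u(s,z):=p_s(z,y)$ (a solution of the heat equation in $z$, by symmetry of the kernel) with $x_1=y$, $x_2=x$, $0<t_1<t_2=t$, and the path of $N:=|x-y|$ unit steps, so that each $k(y_i-y_{i-1})=k(1)$. This gives $p_{t_1}(y,y)\le p_t(x,y)\exp\!\big(\int_{t_1}^t\varphi+\tfrac{2N^2}{k(1)(t-t_1)}\big)$; letting $t_1\to0^+$ and using $p_{t_1}(y,y)\to1$ together with $\int_0^t\varphi<\infty$ yields $p_t(x,y)\ge\exp\!\big(-\Lambda(t)-\tfrac{2|x-y|^2}{k(1)t}\big)$. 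It remains to evaluate $\Lambda(t)=\int_0^t\varphi$ from the explicit $\varphi$ of Lemma~\ref{LemmaRelaxFunc}: on $(0,t_*]$ one integrates $\varphi(s)=-\tfrac1\delta\log(2^\gamma c\,\delta^{1-\gamma}s)$ to get $\Lambda(t)=\tfrac t\delta\big(1-\log(2^\gamma c\,\delta^{1-\gamma}t)\big)$; for $t>t_*$ one splits $\int_0^t=\int_0^{t_*}+\int_{t_*}^t$ and integrates the second piece $\varphi(s)=(2c(\gamma-1)e^2s+C)^{-1/(\gamma-1)}$, producing the logarithmic term when $\gamma=2$ and the power term with exponent $\tfrac{\gamma-2}{\gamma-1}$ when $\gamma>2$, the additive constants $\tau_1,\tau_2$ absorbing $\int_0^{t_*}\varphi$ and the lower endpoint. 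This is exactly the stated $\Lambda$.

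For the \emph{upper bound}, fix $y\in\Z$, $t>0$, $\nu>0$, set $\tau:=c(t,\nu)-t$ and $r:=\lfloor\sqrt\tau\rfloor$. For each $z$ with $|z-x|\le r$, Theorem~\ref{SatzHarnackUngl} applied to $u(s,\cdot)=p_s(\cdot,y)$ from $(t,x)$ to $(t+\tau,z)$ along $|x-z|\le r$ unit steps gives $p_t(x,y)\le p_{t+\tau}(z,y)\exp\!\big(\int_t^{t+\tau}\varphi+\tfrac{2r^2}{k(1)\tau}\big)$, where $\tfrac{2r^2}{k(1)\tau}\le\tfrac{2}{k(1)}$ by the choice of $r$. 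Summing over the $2r+1\ge2\lfloor\sqrt{c(t,\nu)-t}\rfloor$ points of the ball and using $\sum_z p_{t+\tau}(z,y)\le1$ yields $p_t(x,y)\le\tfrac{1}{2\lfloor\sqrt{c(t,\nu)-t}\rfloor}\exp\!\big(\int_t^{c(t,\nu)}\varphi+\tfrac{2}{k(1)}\big)$ (the estimate being vacuous when the floor vanishes). The point of the definition of $c(t,\nu)$ is that $\int_t^{c(t,\nu)}\varphi\le C_0\nu$: for $t\ge t_*$ one has $\varphi(s)\le(2c(\gamma-1)e^2s)^{-1/(\gamma-1)}$ (after choosing $M$ so that $C\ge0$), whose integral over $[t,c(t,\nu)]$ is $(2c(\gamma-1)e^2)^{-1/(\gamma-1)}\tfrac{\gamma-1}{\gamma-2}\big(c(t,\nu)^{(\gamma-2)/(\gamma-1)}-t^{(\gamma-2)/(\gamma-1)}\big)=C_0\nu$, since $c(t,\nu)^{(\gamma-2)/(\gamma-1)}-t^{(\gamma-2)/(\gamma-1)}=\tfrac{\gamma-2}{\gamma-1}\nu$ (and analogously $\log(c(t,\nu)/t)=\nu$ gives $C_0\nu$ when $\gamma=2$); the case $t<t_*$ is subsumed since there $c(t,\nu)-t$ is bounded and $\int_0^{t_*}\varphi<\infty$, so the bound is either vacuous or absorbs into the constants.

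I expect the main difficulty to be organisational rather than conceptual. On the one hand, one must verify that the heat kernel genuinely meets the regularity and $\ell^1_k$-integrability hypotheses of Theorems~\ref{SatzLiYau} and~\ref{SatzHarnackUngl} — in particular the positivity at $t=0$, handled by the time-shift, and the sub-stochasticity $\sum_z p_t(z,y)\le1$. On the other hand, the bookkeeping must be carried out carefully so that the explicit integrals $\int_0^t\varphi$ and $\int_t^{c(t,\nu)}\varphi$ reproduce exactly the stated $\Lambda$, $t_*$, $C_0$ and $c(t,\nu)$ — the function $c(t,\nu)$ being reverse-engineered precisely from the requirement $\int_t^{c(t,\nu)}\varphi=C_0\nu$. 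The only genuine modelling choice in the argument is the balance $r=\lfloor\sqrt\tau\rfloor$ between the radius of the spatial ball (which one wants as large as possible, since it appears in the denominator) and the time increment $\tau$ (which must be large enough to keep the Harnack cost $\tfrac{2r^2}{k(1)\tau}$ bounded).
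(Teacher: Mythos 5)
Your proposal is correct and follows essentially the same route as the paper: reduce to the explicit $CD$-function $\tilde F=\tfrac{c}{2}\hat F$ with $M=\tfrac{2}{\delta}$, apply Theorem \ref{SatzLiYau} to the heat kernel and then Theorem \ref{SatzHarnackUngl} with unit steps, obtaining the lower bound from $p_0(y,y)=1$ with $t_1=0$ (Corollary \ref{KorollarHarnack}) and the upper bound by summing over the ball of radius $\lfloor\sqrt{c(t,\nu)-t}\rfloor$ and using $\sum_z p_{c(t,\nu)}(z,y)\le 1$. The only cosmetic difference is that the paper disposes of the case $t<t_*$ in the upper bound by the single uniform estimate $\varphi(s)\le C_0\,s^{-1/(\gamma-1)}$ for \emph{all} $s>0$ (the logarithmic singularity being weaker than any power), which yields $\int_t^{c(t,\nu)}\varphi\le C_0\nu$ directly from the definition of $c(t,\nu)$, rather than your slightly vaguer ``absorbs into the constants'' for small $t$.
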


\begin{proof}
First, recall that the asymptotic assumptions on the $CD$-function $F$ ensure that there exists a constant $c=c(F)>0$ such that $L$ also satisfies $CD_\Upsilon(0,\tilde{F})$, with $\tilde{F}=\frac{c}{2}\hat{F}$ and $\hat{F}$ given by \eqref{eq:Fhat}, where we choose $M=\frac{2}{\delta}$. Let $\varphi$ be the relaxation function corresponding to $2 \tilde{F}$ given by Lemma \ref{LemmaRelaxFunc}. 
 
We first prove the lower bound in \eqref{eq:heatkernelbounds}. For that purpose, we  apply Theorem \ref{SatzLiYau} to the heat kernel in order to make use of Theorem \ref{SatzHarnackUngl}. For $x,y \in \mathbb{Z}, \ x>y,$ and $N = x-y$ we choose in \eqref{HarnackSumme} $y_0 = y$, $y_N = x$, $t_1 =0$ and $t_2 = t$. Note, that we can choose $t_1 = 0$ since $\varphi$ has a logarithmic singularity in $0$, by Lemma \ref{LemmaRelaxFunc}. We further set $y_i = y_{i-1}+1 \ (i=1,\dots,N-1)$. This yields
\begin{equation*}
1=p_0(y,y) \leq p_t(x,y) \exp \Big(\int_0^t \varphi (s) \ \mathrm{d}s \Big) \exp \Big( \frac{2 \vert x-y \vert^2}{k(1) t} \Big).
\end{equation*}
If $t\leq t_*$, we have by substitution
\begin{align*}
\int_0^t \varphi (s) \mathrm{d}s = -\frac{1}{c \delta^2 M^\gamma}
\int_0^{c \delta M^\gamma t} \log (s) \, \mathrm{d}s = 
\frac{t}{\delta} \big(1-\log (2^\gamma c \delta^{1-\gamma} t) \big).
\end{align*}
If instead $t>t_*$,  Lemma \ref{LemmaRelaxFunc}, together with the previous calculation, gives
\begin{align*}
\int_0^t \varphi (s) \mathrm{d}s \leq 
\frac{t_*}{\delta} \big(1-\log (2^\gamma c \delta^{1-\gamma} t_*) \big)+
\frac{1}{\big(ce^2(\gamma-1)\big)^\frac{1}{\gamma-1}}\int_{t_*}^t s^{-\frac{1}{\gamma-1}}\mathrm{d}s.
\end{align*}
Thus, the remaining cases in the definition of $\Lambda$ follow from a straightforward calculation.

For the proof of the upper bound we first note that one readily verifies from the explicit representation of $\varphi$ that there exists some constant $C_0>0$ such that $\varphi(s)\leq C_0 \,s^{-\frac{1}{\gamma-1}}$ for any $s>0$. Besides that, we  again employ Theorem \ref{SatzHarnackUngl}. 
We consider  $t_1 =t$ and $t_2 = c(t,\nu)$. For $x \in \mathbb{Z}$, we set $x_1=x$ and $x_2 = z$, where we choose $z \in \mathbb{Z}$ such that $\vert x-z \vert \leq \sqrt{c(t,\nu)-t}$. Analogously to the proof of the lower estimate, this gives
\begin{align*}
p_t(x,y) &\leq p_{c(t,\nu)} (z,y) \exp \Big(\int_t^{c(t,\nu)} \varphi (s) \ \mathrm{d}s \Big) \exp \Big(\frac{2 \vert x-z \vert ^2}{k(1) (c(t,\nu)-t)} \Big) \\ &\leq p_{c(t,\nu)} (z,y) \exp \Big(C_0 \int_t^{c(t,\nu)} s^{-\frac{1}{\gamma-1}} \ \mathrm{d}s \Big) \exp \Big(\frac{2}{k(1)}\Big)\\
&= \exp\Big( C_0 \nu + \frac{2}{k(1)}\Big) p_{c(t,\nu)} (z,y).
\end{align*}
Here we used the identity
\begin{equation*}
\int_t^{c(t,\nu) } s^{-\frac{1}{\gamma-1}} \ \mathrm{d}s = \nu,
\end{equation*}
which holds by the choice of $c(t,\nu)$. This yields
\begin{align*}
p_t (x,y) \leq \frac{\exp\Big( C_0 \nu + \frac{2}{k(1)}\Big)}{2 \lfloor \sqrt{c(t,\nu)-t} \rfloor} \sum_{ \{z \in \mathbb{Z}: \vert x-z \vert \leq \sqrt{c(t,\nu)-t} \}} p_{c(t,\nu)} (z,y) \leq \frac{\exp\Big( C_0 \nu + \frac{2}{k(1)}\Big)}{2 \lfloor \sqrt{c(t, \nu)-t} \rfloor}. 
\end{align*}
\end{proof}
\begin{bemerkungen}
(i) Note that for $\gamma>2$ we can write 
\begin{equation*}\label{eq:cminust}
c(t,\nu)-t = h\Big(\frac{\gamma-2}{\gamma-1}\nu +t^\frac{\gamma-2}{\gamma-1}\Big)- h\Big(t^\frac{\gamma-2}{\gamma-1}\Big),
\end{equation*}
where $h(r)=r^\frac{\gamma-1}{\gamma-2}$, $r>0$. In order to study the long-time behavior, the dominating term in the respective Taylor expansion at $t^\frac{\gamma-2}{\gamma-1}$ is given by
\begin{equation*}
\frac{\gamma-2}{\gamma-1}\nu \, h'(t^\frac{\gamma-2}{\gamma-1})= \nu \,t^\frac{1}{\gamma-1}
\end{equation*}
and hence $c(t,\nu)-t$ behaves for large $t$ as $t^\frac{1}{\gamma-1}$.

(ii) For simplicity, we have presented Theorem \ref{thm:heatkernelbounds} in such a way that the sum \eqref{HarnackSumme} is always estimated by taking single steps. However, the optimal choice in \eqref{HarnackSumme} highly depends on the kernel as we have seen in Proposition \ref{prop_Harnacksum} and Example \ref{HarnackExamples}. In particular, Example \ref{HarnackExamples} (i) shows that for $\beta<1$ the direct jump is the best choice. In this case, we can improve \eqref{eq:heatkernelbounds} by
\begin{equation}\label{eq:improvedheatbounds}
\exp \Big(-\Big(\Lambda(t)+\frac{2\vert x-y \vert^{1+\beta}}{ t}\Big)\Big) \leq p_t (x,y) \leq \frac{C}{2 \lfloor (c(t,\nu)-t))^\frac{1}{1+\beta} \rfloor}, 
\end{equation}
where $x,y \in \mathbb{Z}$ and $t>0$. Note that for $\beta<1$ we have $\gamma=\frac{1+2\beta}{\beta}$ by Theorem \ref{SatzCDFunctionPotenzkern}. With the findings from the first part of this remark we conclude that $c(t,\nu)-t$ behaves like $t^\frac{\beta}{1+\beta}$ for large $t$. The asymptotic behaviour of the right-hand side in equation \eqref{eq:improvedheatbounds} for large $t$ is then like $t^\frac{\beta}{(1+\beta)^2}$.

(iii) For the long-time behaviour of the lower bound in \eqref{eq:heatkernelbounds}, the definition of $\Lambda$ yields that  the asymptotic behaviour as $t \to \infty $ in the left-hand side of \eqref{eq:heatkernelbounds} is like $t^{-\frac{1}{ce^2}}$ if $\gamma=2$ and like $\exp\big(- \tilde{c}t^\frac{\gamma-2}{\gamma-1}\big)$ with some 
constant $\tilde{c}>0$ if $\gamma>2$.
\end{bemerkungen}


$\mbox{}$
{\footnotesize

$\mbox{}$


}


\begin{thebibliography}{99}
{\footnotesize
\bibitem{BBL} 
D. Bakry, F. Bolley, I. Gentil: \textit{The Li-Yau inequality and applications under a curvature-dimension condition}. Ann. Inst. Fourier (Grenoble) {\bf 67} (2017), 397--421.
\bibitem{BGL} 
D. Bakry, I. Gentil, M. Ledoux: \textit{Analysis and geometry of Markov diffusion operators}. Grundlehren der mathematischen Wissenschaften \textbf{348}. Springer Cham, Heidelberg, 2014.
\bibitem{BBK} M.\ T.\ Barlow, R.\ F.\ Bass, T.\ Kumagai: \textit{Parabolic
Harnack inequality and heat kernel estimates for random walks with
long range jumps}. Math. Z. {\bf 261} (2009), 297--320.
\bibitem{BL1}
R.\ F.\ Bass, D.\ A.\ Levin: \textit{Harnack inequalities for jump processes}. Potential Anal. {\bf 17} (2002), 375--388.
\bibitem{BL2}
R.\ F.\ Bass, D.\ A.\ Levin:
\textit{Transition probabilities for symmetric jump processes}. Trans. Amer. Math. Soc. {\bf 354} (2002), 2933--2953.
\bibitem{BHL} 
F. Bauer, P. Horn, Y. Lin, G. Lippner, D. Mangoubi, S.-T. Yau: \textit{Li-Yau inequality on graphs}. J. Differential Geom. {\bf 99} (2015), 359--405.
\bibitem{CK} Z.-Q.\ Chen, T.\ Kumagai:
\textit{Heat kernel estimates for stable-like processes on $d$-sets}. Stochastic Process. Appl. {\bf 108} (2003), 27--62.
\bibitem{CRS} 
\'O. Ciaurri, L. Roncal, P. R. Stinga, J. L. Torrea, J. L. Varona: \textit{Nonlocal discrete diffusion equations and the fractional discrete Laplacian, regularity and applications}. Adv. Math. {\bf 330} (2018), 688--738.
\bibitem{DEL}
T. Delmotte: \textit{Parabolic Harnack inequality and estimates of Markov chains on graphs}. Revista Matem\'atica Iberoamericana 15.1 (1999): 181--232. 
\bibitem{DKZ} 
D. Dier, M. Kassmann, R. Zacher: \textit{Discrete versions of the Li-Yau gradient estimate}. Ann. Sc. Norm. Super. Pisa Cl. Sci. (5) {\bf 22} (2021), 691--744.
\bibitem{Li}
P. Li: \textit{Geometric analysis}. Cambridge Studies in Advanced Mathematics. Cambridge
University Press, Cambridge, 2012.
\bibitem{LY} 
P. Li, Yau, S.-T. Yau: \textit{On the parabolic kernel of the Schr\"odinger operator}. Acta Math. {\bf 156} (1986), 153--201.
\bibitem{MUN} 
F. M\"unch: \textit{Li-Yau inequality on finite graphs via non-linear curvature dimension conditions}. J. Math. Pures Appl. (9) {\bf 120} (2018), 130--164. 
\bibitem{MUN2}
F. M\"unch: \textit{Remarks on curvature dimension conditions on graphs.} Calc. Var. Partial Differential 
Equations {\bf 56} (2017), Paper No. 11, 8 pp.
\bibitem{QiLuo}
F. Qi, Q.-M. Luo: \textit{Bounds for the ratio of two gamma functions: from Wendel's asymptotic relation to Elezović-Giordano-Pečarić's theorem}. J. Inequal. Appl. (2013), 20 pp.
\bibitem{SWZ1} 
A. Spener, F. Weber, R. Zacher: \textit{Curvature-dimension inequalities for non-local operators in the discrete setting}. Calc. Var. Partial Differential Equations {\bf 58} (2019), Paper No. 171, 30 pp.
\bibitem{SWZ2} 
A. Spener, F. Weber, R. Zacher: \textit{The fractional Laplacian has infinite dimension}. Comm. Partial Differential Equations {\bf 45} (2020), 57--75.
\bibitem{WEB} 
F. Weber: \textit{Entropy-information inequalities under curvature-dimension conditions for continuous-time Markov chains}. Electron. J. Probab. {\bf 26} (2021), Paper No. 52, 31 pp.
\bibitem{WZ2} 
F. Weber, R. Zacher: \textit{Li-Yau inequalities for general non-local diffusion equations via reduction to the heat kernel}.  Math.\ Ann.\ (2022). https://doi.org/10.1007/s00208-021-02350-z
\bibitem{WZ} 
F. Weber, R. Zacher: \textit{The entropy method under curvature-dimension conditions in the spirit of Bakry-\'Emery in the discrete setting of Markov chains}. J. Funct. Anal. {\bf 281} (2021), Paper No. 109061, 81 pp.
}
\end{thebibliography}
\end{document}